\newcommand {\ri}	{{\mathrm i}}
\newtheorem{remark}[theorem]{Remark}
\title{\bf Large-Scale and Global Maximization of the Distance to Instability}
\author{
	Emre Mengi\thanks{%
		 Department of Mathematics,
		 Ko\c{c} University, Rumelifeneri Yolu, 34450 Sar{\i}yer, Istanbul, Turkey,
		{\tt emengi@ku.edu.tr}.}
}
\begin{document}

\maketitle

\begin{abstract}
The larger the distance to instability from a matrix is, the more robustly stable the associated
autonomous dynamical system is in the presence of uncertainties and typically the less severe
transient behavior its solution exhibits. Motivated by these issues, we consider the maximization 
of the distance to instability of a matrix dependent on several parameters, a nonconvex 
optimization problem that is likely to be nonsmooth. In the first part we propose a globally
convergent algorithm when the matrix is of small size and depends on a few parameters.
In the second part we deal with the problems involving large matrices. We tailor a subspace
framework that reduces the size of the matrix drastically. The strength of the tailored subspace
framework is proven with a global convergence result as the subspaces grow and a superlinear
rate-of-convergence result with respect to the subspace dimension.  \\[5pt]
\noindent
\textbf{Key words.}  Eigenvalue optimization, maximin optimization, distance to instability, 
robust stability, subspace framework, large-scale optimization, global optimization,
eigenvalue perturbation theory
\\[5pt]
\noindent
\textbf{AMS subject classifications.}: 65F15, 90C26, 93D09, 93D15, 49K35
\end{abstract}

\section{Introduction}
\label{sec:intro}

Our concern in this work is the maximization of the distance to instability of a matrix dependent on
parameters over a set of admissible parameter values. We assume throughout that the
matrix dependent on parameters can be represented of the form
\[
	A(x) :=	f_1(x) A_1 + f_2(x) A_2 +	\dots		+	f_\kappa(x) A_\kappa,
\]
for given $A_1, \dots, A_\kappa \in {\mathbb C}^{n\times n}$ and $f_1, \dots, f_\kappa : \Omega \rightarrow {\mathbb R}$
that are real analytic on their domain $\Omega$, which is an open subset of ${\mathbb R}^d$.
Moreover the distance to instability of such an $A(x)$ is defined by
\begin{equation*}
	\begin{split}
	{\mathcal D} (A(x))	&	\;\; := \;\;\;	
			\min \{ \| \Delta \|_2 \; | \; \exists z \in {\mathbb C}^+ 
				 			\;\; {\rm det}(A(x) + \Delta - z I) = 0 \}	\\
					&	\;\;\; = \;\;\;	\min_{z \in {\mathbb C}^+}		\sigma_{\min}(A(x) - z I)	\\
					&	\;\;\; = \;\;	\left\{
									\begin{array}{ll}
										\min_{\omega \in {\mathbb R}} \; \sigma_{\min} (A(x) - \omega \ri I)	
										\quad\quad	& {\rm if} \; \Lambda(A(x)) \cap {\mathbb C}^+ = \emptyset \\
										0	
										\quad\quad	& {\rm otherwise} \\
									\end{array}
								\right.	\; ,
	\end{split}
\end{equation*}
where ${\mathbb C}^+ := \{ z \in {\mathbb C} \; | \; {\rm Re}(z) \geq 0 \}$ denotes the closed right-half of 
the complex plane, $\sigma_{\min}(\cdot)$ and $\Lambda(\cdot)$ denote the smallest singular value and spectrum, respectively,
of their matrix arguments. The second equality above is a simple consequence of the Eckart-Young theorem
\cite[Theorem 2.5.3]{Golub1996}, whereas the third equality follows from the maximum modulus principle. 
The condition $\Lambda(A(x)) \cap {\mathbb C}^+ = \emptyset$ is an algebraic way of stating that the 
autonomous system $y' = A(x) y$ is asymptotically stable. If this condition is violated, the distance to 
instability of $A(x)$ is defined to be zero.

The problem at our hands, maximization of the distance to instability, can formally be expressed as
\begin{equation}\label{eq:prob_defn}
	\begin{split}
	\max_{x \in \widetilde{\Omega}}	\; {\mathcal D}(A(x))	  \;\; = \;\;		
				\max_{x \in \widetilde{\Omega}}	\:	\min_{z \in {\mathbb C}^+}	\:	\sigma_{\min}(A(x) - zI)	\hskip 30ex	\\
				\hskip 7ex		   \;\; = \;\;
				\max  \: 
				\left\{
					\min_{\omega \in {\mathbb R}} \; \sigma_{\min} (A(x) - \omega \ri I) \;\; \bigg| \;\; 
					{x \in \widetilde{\Omega}} \;\; {\rm s.t.} \;\; \Lambda(A(x)) \cap {\mathbb C}^+ = \emptyset
				\right\},				
	\end{split}
\end{equation}
with $\widetilde{\Omega}$ denoting a compact subset of $\Omega$.
One classical problem in control theory is stabilization by output feedback control, which, for given
$A \in {\mathbb C}^{n\times n}, B \in {\mathbb C}^{n\times m}, C \in {\mathbb C}^{p\times n}$, involves
finding a $K \in {\mathbb C}^{m \times p}$ such that all eigenvalues of $A + BKC$ are contained in the open
left-half of the complex plane. A more robust stabilization procedure would attempt to minimize
the spectral abscissa (i.e., the real part of the rightmost eigenvalue) of $A + BKC$ over all $K$. Even more robust 
approaches would aim to minimize the $\varepsilon$-pseudospectral abscissa (i.e., the real part of the rightmost
point among the eigenvalues of all matrices within an $\varepsilon$ neighborhood for instance with respect to 
the matrix 2-norm) of $A + BKC$ for a prescribed $\varepsilon > 0$, or maximize the 
distance  to instability of $A + BKC$ \cite{Burke2003}. The latter can be formally stated as
\begin{equation}\label{eq:stab_output}
	\max_{K \in {\mathbb C}^{m\times p}}	\; {\mathcal D}(A + BKC).	
\end{equation}
If the entries of $K$ are constrained to lie in prescribed boxes in the complex plane, then (\ref{eq:stab_output}) falls 
into the scope of (\ref{eq:prob_defn}) that we consider here.

\subsection{Literature}
Optimization of spectral abscissa has been an active field of research since the beginning of 2000s.
It is observed in \cite{Burke2001a, Burke2001} that the nonsmoothness at the optimal point is a common
phenomenon. In particular, it is shown in these works that, for a particular affine family of matrices, the optimal
matrix with the smallest spectral abscissa is a Jordan block, and this property remains to be true for
slightly perturbed problems. These observations led to a concentrated effort put into the development of numerical
algorithms for nonconvex and nonsmooth optimization. For this purpose, a gradient sampling algorithm
is introduced in \cite{Burke2002}, its convergence is analyzed in \cite{Burke2005}. An implementation of
a hybrid algorithm based on this gradient sampling algorithm as well as BFGS, called HANSO, is made
publicly available \cite{Hanso}. A special adaptation of this software HIFOO \cite{Burke2006, Gumussoy2009, Arzelier2011}
for fixed-order ${\mathcal H}_\infty$ controller design has found various applications. HIFOO can be
used to optimize spectral abscissa, pseudospectral abscissa and distance to instability, but it would
converge only to a locally optimal solution. The problem of stabilization by output feedback
is referred as one of the important open problems in control theory in \cite{Blondel1997}, indeed it is well known
that the problem of finding a $K$ with box constraints on the entries of $K$ such that $A +BKC$
has all of its eigenvalues on the open left-half of the complex plane is NP-hard \cite{Nemirovski1993}. 
HIFOO and the related ideas can be applied to find $K$ minimizing the spectral abscissa, pseudospectral abscissa 
or maximizing the distance to instability of $A + BKC$ \cite{Burke2003}, but again it would lead to a solution 
that is locally optimal. In a different direction, bundling techniques \cite{Apkarian2006, Apkarian2006b} and 
spectral bundle methods \cite{Apkarian2008} have been proposed for ${\mathcal H}_\infty$-synthesis and 
${\mathcal H}_\infty$-norm minimization. More recently, sequential quadratic and linear programming techniques 
that take all of the eigenvalues (instead of only the rightmost eigenvalue) into account have been employed for 
spectral abscissa minimization \cite{Kungurtsev2014}. All of these techniques in the literature are meant to find
a locally optimal solution even if there are only a few optimization parameters. Furthermore,
none of them is specifically designed for large-scale problems; for instance, none of these 
approaches is meant for problems on the order of a few thousands.

The current-state-of-the-art regarding the computation of the distance to instability, which for this work 
is the objective function to be maximized, is at a mature stage. There are very reliable numerical techniques 
that converge very quickly and that are meant for small- to medium-scale problems. All of these techniques 
\cite{Byers1988, Boyd1990, Bruinsma1990} to compute the distance to instability of an $n\times n$ matrix $A$ 
are based on repeatedly finding the level-sets of the singular value function $f(\omega) := \sigma_{\min}(A - \omega {\rm i} I)$ 
by extracting the imaginary eigenvalues of a $2n\times 2n$ Hamiltonian matrix. For larger problems, a fixed-point 
iteration is proposed in \cite{Guglielmi2013}, and a technique that operates on the roots of an implicitly 
defined determinant function is discussed in \cite{Freitag2014}. These numerical techniques are meant for larger problems 
and work directly on $n\times n$ problems, but they can get stagnated at local minimizers of the singular value function.
A subspace framework is proposed in \cite{Kressner2014} to cope with large-scale distance to instability computations,
and observed to converge quickly with respect to the subspace dimension.

\subsection{Outline and Contributions}
We first deal with the maximization of the distance to instability when the matrices $A_1, \dots, A_\kappa$ 
involved are of small size in Section \ref{sec:small_prob}. In particular, we discuss how the algorithm 
in \cite{Mengi2014} can be adapted for this purpose. This results in Algorithm \ref{small_alg}, which
is globally convergent, unlike the methods employed in the literature, but aims at problems depending 
on only a few parameters.  

The rest of the paper is devoted to large-scale problems when $A_1, \dots, A_\kappa$ are of large size. 
Subspace frameworks based on one-sided restrictions of the matrix-valued function are proposed. At 
every step of the subspace frameworks the distance to instability is maximized for such a restricted problem, 
then the subspace is expanded with the addition of a singular vector at the maximizing parameter value so 
that Hermite interpolation properties hold between the full and the restricted distance to instability functions 
at this parameter value. We first present a basic framework along these lines, namely Algorithm \ref{alg}
in Section \ref{sec:large_prob}, and later an extended version Algorithm \ref{alge} in Section \ref{sec:extended_sf},
which Hermite-interpolates not only at the optimal parameter values but also at nearby points.
A detailed convergence analysis for the subspace frameworks is carried out in Section \ref{sec:convergence};
remarkably the global convergence of the subspace frameworks is established, and
a superlinear rate-of-convergence with respect to the subspace dimension is deduced for the basic framework 
when $d=1$ and for the extended framework for every $d$. The practical implication of these convergence
results is that the frameworks are capable of computing global maximizers of ${\mathcal D}(A(x))$ 
over $x \in \widetilde{\Omega}$ with high accuracy by replacing $A(x)$ with matrix-valued functions of size 
much smaller. Efficient solutions of the restricted problems are addressed in Section \ref{sec:sf_implement}.
The proposed subspace frameworks in Sections \ref{sec:large_prob} and \ref{sec:convergence} perform the inner 
minimization for the restricted problems on the right-hand side of the complex plane. Section \ref{sec:sf_uniform_stab} 
argues that these inner minimization problems can rather be performed on the imaginary axis if $A(x)$ is
asymptotically stable for all $x \in \widetilde{\Omega}$.
Finally, the performance of the proposed subspace frameworks in practice are illustrated on
examples in Section \ref{sec:num_exps}.

The literature lacks studies that address maximin or minimax optimization problems
involving a prescribed eigenvalue of a large-scale matrix-valued function. To our knowledge,
this is the first work that proposes subspace frameworks to deal with large-scale nature
of such problems; what is striking is the strong convergence results that we deduce
for the subspace frameworks. The proposed subspace frameworks and their convergence
analyses set examples for various other minimax or maximin eigenvalue optimization
problems, including the minimization of the ${\mathcal H}_\infty$-norm and the 
$\varepsilon$-pseudospectral abscissa for a prescribed $\varepsilon$. 
The approach for the small-scale problem is based on \cite{Mengi2014} and
global piece-wise quadratic estimators, whereas the subspace frameworks for the 
large-scale problem are inspired from \cite{Kangal2015}. However, all those previous
works concern the minimization of the $J$th largest eigenvalue, while the problem
we deal here has the maximin structure which introduces additional challenges.

\section{Small-Scale Problems}\label{sec:small_prob}
The algorithm we discuss in this section is meant to compute a globally optimal solution when the problem has a few optimization 
parameters, e.g., $d = 1$ or $d=2$. If there are more than a few parameters, one can for instance resort to HIFOO \cite{Burke2006} 
and be content with a locally optimal solution. Let us first assume that $A(x)$ is asymptotically stable, i.e., 
$\Lambda(A(x)) \cap {\mathbb C}^+ = \emptyset$, for all $x \in \widetilde{\Omega}$. In this case, for the solution 
of (\ref{eq:prob_defn}), we could equivalently deal with
\[
	\max_{x \in \widetilde{\Omega}}	\; \left[ {\mathcal D}(A(x)) \right]^2	\;\; = \;\;		
				\max_{x \in \widetilde{\Omega}}	\: \min_{\omega \in {\mathbb R}} \; \lambda( x,\omega )
\]
where $\lambda(x,\omega)$ denotes the smallest eigenvalue of
$
	M(x,\omega)	:=	(A(x) -\ri \omega I)^\ast	(A(x) - \ri \omega I).
$
Let us define $\omega(x)$ implicitly by
\[
	\omega(x) :=	{\arg\min}_{\omega \in {\mathbb R}} \; \lambda ( x,\omega ).
\]
In the case the global minimizer of the problem on the right is not unique, we define $\omega(x)$ arbitrarily as any global 
minimizer. Hence, the objective that we would like to maximize is
\[
	 \left[ {\mathcal D}(A(x)) \right]^2	\;\; = \;\;		 \lambda ( x,\omega(x) ).
\]

Let us consider an $\widetilde{x}$ where the global minimizer $\omega(\widetilde{x})$ of $\lambda ( \widetilde{x},\omega )$
over all $\omega$ is unique and the eigenvalue $\lambda ( \widetilde{x},\omega(\widetilde{x}))$ is simple.
The eigenvalue function $\lambda ( x,\omega )$ is twice continuously differentiable at $(x,\omega) = (\widetilde{x}, \omega(\widetilde{x}))$.
Throughout this section, we assume that for 
every $\widetilde{x}$ such that the global minimizer $\omega(\widetilde{x})$ is unique and $\lambda ( \widetilde{x},\omega(\widetilde{x}))$ 
is simple, the property 
\begin{equation}\label{eq:2der_positivitiy}
	\partial^2 \lambda ( x, \omega ) / \partial \omega^2 \: |_{(x,\omega) = (\widetilde{x}, \omega(\widetilde{x}))} \neq 0
\end{equation}
holds. Observe that the second derivative above cannot be negative, since $\omega(\widetilde{x})$ is a minimizer of 
$\lambda ( \widetilde{x}, \omega )$ over $\omega \in {\mathbb R}$. Hence, under assumption (\ref{eq:2der_positivitiy}), 
we have
\[
	\partial^2 \lambda ( x, \omega ) / \partial \omega^2 \: |_{(x,\omega) = (\widetilde{x}, \omega(\widetilde{x}))} > 0.
\]
Now the implicit function theorem ensures that the function $\omega(x)$ 
is defined uniquely in a neighborhood of $\widetilde{x}$ and twice continuously differentiable in this neighborhood. 
This in turn implies that $[{\mathcal D}(A(x))]^2$ is twice continuously differentiable in this neighborhood of $\widetilde{x}$. 
Its derivatives satisfy the properties stated by the next theorem. 
For part (i) of the theorem, we refer to \cite[Lemma 5.1]{Mengi2014}, whereas part (ii) is immediate from the fact that 
$\partial \lambda ( x, \omega ) / \partial \omega \: |_{(x,\omega) = (\widetilde{x}, \omega(\widetilde{x}))} = 0$ and the
chain rule. Note that, in what follows, the derivatives $\lambda_x(\cdot), \omega'(x), \lambda_{x\omega}(\cdot)$ are row vectors,
whereas $\nabla^2_{xx} \lambda(\cdot)$ denotes the $d\times d$ Hessian of $\lambda(\cdot)$ with respect to $x$ only.
The notations $\Phi'_{-}(\widetilde{\upsilon}), \Phi'_{+}(\widetilde{\upsilon})$ stand for the left-hand, right-hand derivatives of 
the univariate function $\Phi(\upsilon)$ at $\widetilde{\upsilon}$.
\begin{theorem}\label{thm:der_D}
The following hold for every $\widetilde{x} \in {\mathbb R}^d$: 
	\begin{enumerate}
	\item[\bf (i)] For every $p \in {\mathbb R}^d$, letting 
	$\Phi : {\mathbb R} \rightarrow {\mathbb R}, \;\; \Phi(\upsilon) := [{\mathcal D}(A(\widetilde{x} + \upsilon p))]^2$, 
	we have $\Phi_{-}'(0) \geq \Phi_{+}'(0)$.
	\item[\bf (ii)] If $\widetilde{x}$ is such that the global minimizer $\omega(\widetilde{x})$ of $\lambda ( \widetilde{x}, \omega )$ over all $\omega$ 
	is unique and the eigenvalue $\lambda ( \widetilde{x},\omega(\widetilde{x}))$ is simple, then
		\begin{itemize}
			\item $
					\nabla [{\mathcal D}(A(\widetilde{x}))]^2 \;\; = \;\;   \lambda_x (  x, \omega )^T \: |_{(x,\omega) = (\widetilde{x}, \omega(\widetilde{x}))}$,
				and
			\item $\nabla^2 [{\mathcal D}(A(\widetilde{x}))]^2 \;\; = \;\;  \left\{ \nabla^2_{xx} \lambda ( x, \omega ) + \lambda_{x \omega} ( x,\omega )^T \omega'(x) \right\} |_{(x,\omega) = (\widetilde{x}, \omega(\widetilde{x}))}$.
		\end{itemize}
	\end{enumerate}
\end{theorem}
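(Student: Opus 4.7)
The plan is to reduce both parts of the theorem to the local regularity of the inner minimizer $\omega(x)$ provided by the implicit function theorem together with the first-order optimality identity $\lambda_\omega(\widetilde{x},\omega(\widetilde{x})) = 0$. Under the hypotheses of part (ii), assumption \eqref{eq:2der_positivitiy} yields $\partial^2 \lambda/\partial\omega^2 > 0$ at $(\widetilde{x},\omega(\widetilde{x}))$, so applying the implicit function theorem to the scalar equation $\partial \lambda(x,\omega)/\partial\omega = 0$ produces a unique, twice continuously differentiable solution branch $\omega(x)$ on a neighborhood of $\widetilde{x}$, and on that neighborhood the identity $[\mathcal{D}(A(x))]^2 = \lambda(x,\omega(x))$ holds.

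For part (i), I would invoke Lemma~5.1 of \cite{Mengi2014}, which establishes exactly this left/right derivative inequality for functions expressible as the pointwise minimum of a smooth family; here that family is $\{\lambda(\cdot,\omega)\}_{\omega \in {\mathbb R}}$. The inequality $\Phi_-'(0) \geq \Phi_+'(0)$ is a generic concave-like property of min functions: at a kink, crossing $\upsilon = 0$ swaps which $\omega$-branch is active, and the geometry of a pointwise minimum forces the slope to drop rather than rise. If a self-contained argument were desired, I would derive it from Danskin-type formulas of the form $\Phi_+'(0) = \min_{\omega \in \Omega^\ast} \nabla_x \lambda(\widetilde{x},\omega)^T p$ and $\Phi_-'(0) = \max_{\omega \in \Omega^\ast} \nabla_x \lambda(\widetilde{x},\omega)^T p$, where $\Omega^\ast$ is the set of inner minimizers active at $\widetilde{x}$, together with compactness of $\Omega^\ast$.

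For part (ii), the gradient formula follows from the chain rule applied to $\lambda(x,\omega(x))$ combined with the identity $\lambda_\omega(\widetilde{x},\omega(\widetilde{x})) = 0$: the contribution $\omega'(x)^T \lambda_\omega(x,\omega(x))$ vanishes at $\widetilde{x}$, leaving only $\lambda_x(x,\omega(x))^T$. Differentiating once more, the Hessian splits into three kinds of contributions. The derivative of $\lambda_x(x,\omega(x))^T$ produces $\nabla^2_{xx}\lambda + \lambda_{x\omega}^T \omega'(x)$, which is exactly the claimed formula. The remaining contributions arise from differentiating $\lambda_\omega(x,\omega(x))\,\omega'(x)^T$; the piece proportional to $\lambda_\omega$ vanishes at $\widetilde{x}$, while the piece $\omega'(x)^T(\lambda_{\omega x} + \lambda_{\omega\omega}\omega'(x))$ vanishes because implicitly differentiating the identity $\lambda_\omega(x,\omega(x)) \equiv 0$ gives $\lambda_{\omega x} + \lambda_{\omega\omega}\omega'(x) = 0$. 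Hence only the $\nabla^2_{xx}\lambda + \lambda_{x\omega}^T \omega'(x)$ piece survives at $\widetilde{x}$.

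The main potential obstacle is not analytical but bookkeeping: with $\lambda_x$, $\omega'(x)$, and $\lambda_{x\omega}$ all declared to be row vectors, one must track transposes carefully so that $\lambda_{x\omega}^T \omega'(x)$ has the correct $d\times d$ shape and reproduces the expected mixed partials entrywise. Once that convention is fixed and the cancellation from the first-order identity is noted, the entire computation collapses to a short chain-rule calculation.
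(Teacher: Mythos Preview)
Your proposal is correct and follows essentially the same approach as the paper: for part (i) you cite \cite[Lemma 5.1]{Mengi2014} exactly as the paper does, and for part (ii) you invoke the chain rule together with the first-order identity $\lambda_\omega(\widetilde{x},\omega(\widetilde{x})) = 0$, which is precisely the paper's justification. The only minor simplification available is to note that $\lambda_\omega(x,\omega(x)) \equiv 0$ holds identically on the whole neighborhood (by the very definition of $\omega(x)$ through the implicit function theorem), so the gradient equals $\lambda_x(x,\omega(x))^T$ identically there, and one differentiation immediately yields the Hessian formula without tracking the extra terms you cancel via the implicit identity.
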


The next result follows from part (i) of Theorem \ref{thm:der_D}. 
Its proof is similar to that of Theorem 5.2 in \cite{Mengi2014}. Only here the result is in terms of upper envelopes for a smallest 
eigenvalue function, whereas the result in \cite{Mengi2014} introduces lower envelopes for a largest eigenvalue function.
Here and elsewhere, $\lambda_{\max}(\cdot)$ refers to the largest eigenvalue of a Hermitian matrix argument. 
\begin{theorem}[Upper Support Function]\label{thm:sup_func}
Let $x^{(k)}$ be a point such that the global minimizer $\omega(x^{(k)})$ of the eigenvalue function $\lambda ( x^{(k)}, \omega )$
over all $\omega$ is unique and $\lambda ( x^{(k)}, \omega( x^{(k)} ) )$ is simple. 
Furthermore, let $\gamma$ satisfy $\lambda_{\max} ( \nabla^2 [{\mathcal D}(A(x))]^2 )  \leq \gamma$ for all $x$ 
such that the global minimizer $\omega(x)$ is unique and $\lambda ( x,\omega(x))$ is simple. For every $x \in {\mathbb R}^d$, we have
\begin{equation}\label{eq:upper_support}
\begin{split}
	[ {\mathcal D}(A(x)) ]^2		 \;\;\; \leq \;\;		\hskip 63ex \\
		\hskip 4ex		q(x; x^{(k)})	:=	[ {\mathcal D}(A(x^{(k)})) ]^2	+	
						\left\{  \nabla [{\mathcal D}(A(x^{(k)}))]^2 \right\}^T  (x - x^{(k)})	+
															\frac{\gamma}{2} \| x - x^{(k)} \|^2_2.
\end{split}
\end{equation}
\end{theorem}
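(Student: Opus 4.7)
The plan is to reduce the multivariate claim to a one-dimensional statement along the line segment joining $x^{(k)}$ to $x$. Set $p := x - x^{(k)}$ and define
$$
\Phi(\upsilon) := [{\mathcal D}(A(x^{(k)} + \upsilon p))]^2, \qquad Q(\upsilon) := q(x^{(k)} + \upsilon p;\, x^{(k)}),
$$
for $\upsilon \in [0,1]$, and consider $\chi(\upsilon) := Q(\upsilon) - \Phi(\upsilon)$. Establishing $\chi(1) \geq 0$ is precisely the inequality (\ref{eq:upper_support}) with $x = x^{(k)} + p$. At $\upsilon = 0$ the hypotheses on $x^{(k)}$ allow me to invoke part (ii) of Theorem \ref{thm:der_D}, which yields $\Phi(0) = Q(0)$ and $\Phi'(0) = \{\nabla[{\mathcal D}(A(x^{(k)}))]^2\}^T p = Q'(0)$, so $\chi(0) = 0$ and $\chi'(0) = 0$.

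Next I control the second-order behavior of $\chi$. At any $\widetilde{\upsilon} \in (0,1]$ for which $\omega(x^{(k)} + \widetilde{\upsilon} p)$ is unique and the associated eigenvalue is simple, Theorem \ref{thm:der_D}(ii) yields that $\Phi$ is twice continuously differentiable at $\widetilde{\upsilon}$ with
$$
\Phi''(\widetilde{\upsilon}) = p^T \nabla^2 [{\mathcal D}(A(x^{(k)} + \widetilde{\upsilon} p))]^2 \, p \;\leq\; \lambda_{\max}\!\left(\nabla^2 [{\mathcal D}(A(x^{(k)} + \widetilde{\upsilon} p))]^2\right) \|p\|_2^2 \;\leq\; \gamma \|p\|_2^2,
$$
by the Rayleigh quotient bound and the hypothesis on $\gamma$. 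Since $Q''(\widetilde{\upsilon}) = \gamma \|p\|_2^2$, this gives $\chi''(\widetilde{\upsilon}) \geq 0$ at every such smooth point. At an exceptional $\widetilde{\upsilon}$ where uniqueness of the minimizer or simplicity of the eigenvalue fails, part (i) of Theorem \ref{thm:der_D}, applied at $x^{(k)} + \widetilde{\upsilon} p$ in direction $p$, delivers $\Phi_{-}'(\widetilde{\upsilon}) \geq \Phi_{+}'(\widetilde{\upsilon})$, and since $Q$ is smooth this translates to $\chi_{+}'(\widetilde{\upsilon}) \geq \chi_{-}'(\widetilde{\upsilon})$, i.e., $\chi'$ admits only upward jumps.

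Assuming $\Phi$ is piecewise $C^2$ on $[0,1]$ with finitely many exceptional points, the two ingredients above combine to show that $\chi'$ is nondecreasing on $[0,1]$. Together with $\chi'(0)=0$, this yields $\chi' \geq 0$, hence $\chi$ is nondecreasing on $[0,1]$, and $\chi(1) \geq \chi(0) = 0$, which is the assertion (\ref{eq:upper_support}).

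The main obstacle in this outline is the regularity of the univariate function $\Phi$: the argument as written requires the set of $\upsilon \in [0,1]$ at which uniqueness of $\omega(x^{(k)} + \upsilon p)$ or simplicity of $\lambda(x^{(k)} + \upsilon p, \omega(\cdot))$ fails to be a finite subset of $[0,1]$, and on the complement $\Phi$ to be $C^2$ with controlled one-sided limits of $\Phi'$ across these exceptional points. This is exactly the kind of generic piecewise-analyticity argument carried out for the largest-eigenvalue analogue in \cite[Theorem 5.2]{Mengi2014}, and it can be justified here from the real analytic dependence of the eigenvalues of $M(x,\omega) = (A(x) - \ri \omega I)^\ast (A(x) - \ri \omega I)$ on $(x,\omega)$; the remaining details of that reduction transfer almost verbatim to the present upper-envelope setting, replacing the lower envelopes of a largest eigenvalue function in \cite{Mengi2014} with upper envelopes of a smallest eigenvalue function.
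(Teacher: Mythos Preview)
Your proposal is correct and follows essentially the same approach as the paper: the paper does not spell out a proof but simply notes that the result follows from part (i) of Theorem~\ref{thm:der_D} and that the argument is the upper-envelope analogue of \cite[Theorem~5.2]{Mengi2014}. Your line-restriction argument with $\chi = Q - \Phi$, combining $\chi''\geq 0$ at smooth points with upward jumps of $\chi'$ at nonsmooth points via Theorem~\ref{thm:der_D}(i), is precisely that adaptation, and you correctly flag the piecewise-analyticity regularity issue and defer it to the same reference the paper does.
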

We refer the function $q(x; x^{(k)})$ in (\ref{eq:upper_support}) as the upper support function for $[ {\mathcal D}(A(x)) ]^2$
about $x^{(k)}$. In \cite{Mengi2014}, based on such support functions, a globally convergent optimization algorithm 
due to Breiman and Cutler \cite{Breiman1993} has been adopted for eigenvalue optimization. Here, we adopt that algorithm
to maximize $[ {\mathcal D}(A(x)) ]^2$ globally. 

\subsection{Analytical Deduction of $\gamma$}
Before spelling out the algorithm formally, let us elaborate on how one can obtain $\gamma$ as in Theorem (\ref{thm:sup_func}) analytically.
\begin{theorem}\label{thm:bound_sdev}
The function ${\mathcal D}(A(x))$ satisfies
\[
\lambda_{\max} ( \nabla^2 [{\mathcal D}(A(\widetilde{x}))]^2 )	
		\;\;	\leq	\;\;	
\lambda_{\max}
\left(  \nabla^2_{xx} M (\widetilde{x},\omega(\widetilde{x})) \right)
\]
for all $\widetilde{x}$ such that the global minimizer $\omega(\widetilde{x})$ is unique and
$\lambda(\widetilde{x}, \omega(\widetilde{x}))$ is simple, where
\[
	\nabla^2_{xx} M(x,\omega)
		\;\;	:=	\;\;	
	\left[
			\begin{array}{cccc}
				\frac{\partial^2 M(x,\omega)}{\partial x_1^2}
						&
				\frac{\partial^2 M(x,\omega)}{\partial x_1 \partial x_2}
						&
					\dots
						&
				\frac{\partial^2 M(x,\omega)}{\partial x_1 \partial x_d}		\\ 
				\frac{\partial^2 M(x,\omega)}{\partial x_2 \partial x_1}
						&
				\frac{\partial^2 M(x,\omega)}{\partial x_2^2}
						&
					\dots
						&
				\frac{\partial^2 M(x,\omega)}{\partial x_2 \partial x_d}		\\
						&
						&
						\ddots
						&												\\
				\frac{\partial^2 M(x,\omega)}{\partial x_d \partial x_1}
						&
				\frac{\partial^2 M(x,\omega)}{\partial x_d \partial x_2}
						&
					\dots
						&
				\frac{\partial^2 M(x,\omega)}{\partial x_d^2}
			\end{array}
	\right].
\]
\end{theorem}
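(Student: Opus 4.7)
The plan is to upper-bound $[\mathcal{D}(A(x))]^2$ pointwise by a scalar-valued function that agrees with it at $\widetilde{x}$ up to first order, then read off the Hessian inequality, and finally relate the resulting $d\times d$ Hessian to the $nd\times nd$ block matrix $\nabla^2_{xx} M(\widetilde{x},\omega(\widetilde{x}))$. Concretely, let $v_0\in\mathbb{C}^n$ be a unit eigenvector of $M(\widetilde{x},\omega(\widetilde{x}))$ corresponding to the smallest (and, by simplicity assumption, well-defined) eigenvalue $\lambda(\widetilde{x},\omega(\widetilde{x}))$. Because $\lambda(x,\omega)$ is the smallest eigenvalue of the Hermitian matrix $M(x,\omega)$, the Rayleigh quotient inequality gives
\[
	\lambda(x,\omega) \;\leq\; v_0^\ast M(x,\omega) v_0 \quad \text{for all } (x,\omega),
\]
with equality at $(\widetilde{x},\omega(\widetilde{x}))$.

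Next I would freeze $\omega = \omega(\widetilde{x})$ and define $h(x) := v_0^\ast M(x,\omega(\widetilde{x})) v_0$. Using $[\mathcal{D}(A(x))]^2 = \min_{\omega} \lambda(x,\omega) \leq \lambda(x,\omega(\widetilde{x}))$, I get $[\mathcal{D}(A(x))]^2 \leq h(x)$ everywhere, with equality at $x=\widetilde{x}$. Under the stated hypotheses (and the standing assumption \eqref{eq:2der_positivitiy}), Theorem \ref{thm:der_D} guarantees that $[\mathcal{D}(A(x))]^2$ is $C^2$ near $\widetilde{x}$; $h$ is polynomial in $x$ and trivially $C^2$. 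Since $h - [\mathcal{D}(A(\cdot))]^2$ attains its global minimum value $0$ at the interior point $\widetilde{x}$, the second-order necessary conditions yield
\[
	\nabla^2 [\mathcal{D}(A(\widetilde{x}))]^2 \;\preceq\; \nabla^2 h(\widetilde{x}),
\]
so in particular $\lambda_{\max}(\nabla^2[\mathcal{D}(A(\widetilde{x}))]^2) \leq \lambda_{\max}(\nabla^2 h(\widetilde{x}))$.

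The last step is to bound $\lambda_{\max}(\nabla^2 h(\widetilde{x}))$ by $\lambda_{\max}(\nabla^2_{xx} M(\widetilde{x},\omega(\widetilde{x})))$. Differentiating under the $v_0^\ast(\cdot)v_0$ sandwich, the $(i,j)$ entry of the $d\times d$ Hessian is $v_0^\ast \bigl[\partial^2 M/\partial x_i\partial x_j\bigr]v_0$, evaluated at $(\widetilde{x},\omega(\widetilde{x}))$. For any unit $u\in\mathbb{R}^d$, a direct computation shows
\[
	u^T \nabla^2 h(\widetilde{x})\, u \;=\; \sum_{i,j=1}^{d} u_i u_j\, v_0^\ast \tfrac{\partial^2 M}{\partial x_i \partial x_j} v_0 \;=\; (u\otimes v_0)^\ast \, \nabla^2_{xx} M(\widetilde{x},\omega(\widetilde{x}))\, (u\otimes v_0),
\]
and the test vector $u\otimes v_0 \in \mathbb{C}^{nd}$ has unit norm. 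The variational characterization of the largest eigenvalue for the Hermitian block matrix on the right then bounds this quadratic form by $\lambda_{\max}(\nabla^2_{xx} M(\widetilde{x},\omega(\widetilde{x})))$, and maximizing over $u$ completes the chain of inequalities.

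The only delicate point I anticipate is the Hessian inequality $\nabla^2[\mathcal{D}(A(\widetilde{x}))]^2 \preceq \nabla^2 h(\widetilde{x})$: I must make sure both functions are genuinely twice continuously differentiable in a full neighborhood of $\widetilde{x}$ (not only at $\widetilde{x}$) so that the second-order minimum condition for $h - [\mathcal{D}(A(\cdot))]^2$ applies. This is where I rely on the regularity consequences of (\ref{eq:2der_positivitiy}) together with simplicity of $\lambda(\widetilde{x},\omega(\widetilde{x}))$, exactly as in the discussion preceding Theorem \ref{thm:der_D}. Everything else is bookkeeping with Rayleigh quotients and the Kronecker-like test vector $u\otimes v_0$.
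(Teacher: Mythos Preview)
Your proof is correct and genuinely different from the paper's. The paper proceeds by explicit computation: it writes $\nabla^2[\mathcal{D}(A(\widetilde{x}))]^2$ via Theorem~\ref{thm:der_D}(ii), eliminates $\omega'(\widetilde{x})$ using the implicit function theorem to obtain $\nabla^2_{xx}\lambda - \lambda_{x\omega}^T\lambda_{x\omega}/\lambda_{\omega\omega}$, drops the negative-semidefinite correction, then expands $\nabla^2_{xx}\lambda$ via Lancaster's eigenvalue perturbation formulas and discards further negative-semidefinite terms to arrive at the matrix $H$ (which is exactly your $\nabla^2 h(\widetilde{x})$), finally bounding $\lambda_{\max}(H)\le\lambda_{\max}(\nabla^2_{xx}M)$. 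Your argument collapses the first two reductions into a single ``touching from above'' step: freezing $\omega$ accounts for the implicit-function correction, and the Rayleigh-quotient bound for the smallest eigenvalue accounts for the Lancaster correction, both at once. This is more elementary---no eigenvalue perturbation formulas are needed---and conceptually cleaner; the paper's route, on the other hand, yields the exact expression for $\nabla^2[\mathcal{D}(A(\widetilde{x}))]^2$ along the way, which could be useful for other purposes. One small slip: $h$ is not polynomial in $x$ in general, since the $f_j$ are merely real analytic, but it is certainly $C^2$ (indeed real analytic), which is all you use.
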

\begin{proof}
The arguments are similar to those in the proof of Theorem 6.1 in \cite{Mengi2017}.
Let $\widetilde{x}$ be a point such that the minimum $\omega(\widetilde{x})$ is unique. 
By part (ii) of Theorem \ref{thm:der_D},
\begin{equation}\label{eq:exp_2der}
	 \nabla^2 [{\mathcal D}(A(\widetilde{x})) ]^2
		\;\; = \;\;
	\left\{ \nabla^2_{xx} \lambda ( x, \omega ) + 
	\lambda_{x \omega} ( x,\omega )^T \omega'(x) \right\} \bigg|_{(x,\omega) = (\widetilde{x}, \omega(\widetilde{x}))}.
\end{equation}
The implicitly-defined function $\omega(x)$ satisfies
\[
	\partial \lambda (  x, \omega(x) ) / \partial \omega  = 0
\]
for all $x$ in a neighborhood of $\widetilde{x}$. Differentiating this equation
with respect to $x$ at $(x,\omega(x)) = (\widetilde{x}, \omega(\widetilde{x}))$ yields
\[
	\omega'(\widetilde{x})		\;\; = \;\;	
	- \left\{  \frac{ \lambda_{x \omega } ( x,\omega ) }{  \lambda_{\omega \omega}( x,\omega ) } \right\}
	\bigg|_{(x,\omega) = (\widetilde{x}, \omega(\widetilde{x}))}
\]
where the derivative in the denominator is positive due to (\ref{eq:2der_positivitiy}). Now plug this expression
for $\omega(\widetilde{x})$ in (\ref{eq:exp_2der}) to obtain
\[
	\nabla^2 [{\mathcal D}(A(\widetilde{x})) ]^2
		\;\; = \;\;
	\left\{ \nabla^2_{xx} \lambda ( x, \omega )  \; - \;
	\frac{  \lambda_{x \omega}( x,\omega )^T \lambda_{x \omega}( x,\omega ) } { \lambda_{\omega \omega}(x,\omega) }
	\right\} \bigg|_{(x,\omega) = (\widetilde{x}, \omega(\widetilde{x}))}.
\]
The last expression
and the formulas \cite{Lancaster1964} for the second derivatives of $\lambda(x,\omega)$
imply
\begin{equation*}
	\begin{split}
			\lambda_{\max} \left(   \nabla^2 [ {\mathcal D}(A(\widetilde{x})) ]^2  \right)	
				\;\;	\leq	\;\;		
	\lambda_{\max} \left( \nabla^2_{xx} \lambda ( \widetilde{x}, \omega(\widetilde{x}) ) \right)	\hskip 32ex \\
	\hskip 5ex
			 =	\;	\lambda_{\max} \left\{ H(\widetilde{x},\omega(\widetilde{x}))	\; + \;
		2 \sum_{j=1}^{n-1} \frac{1}{\lambda ( \widetilde{x},\omega(\widetilde{x})) - \lambda_j( \widetilde{x},\omega(\widetilde{x}))} 
		   				\Re (H_j(\widetilde{x},\omega(\widetilde{x}))) \right\},
	\end{split}
\end{equation*}
where 
\begin{equation*}
	\begin{split}
	[H(\widetilde{x},\omega(\widetilde{x}))]_{k,\ell}	\;	& 	\;\; = \;\;		
			v^\ast	\left\{   \frac{\partial^2 M(\widetilde{x},\omega(\widetilde{x}))}{\partial x_k \partial x_\ell } \right\}	v,	\\
	[H_j(\widetilde{x},\omega(\widetilde{x}))]_{k,\ell}		& 	\;\; = \;\;	
			\left[
					v^\ast	
					\left\{ \frac{\partial M(\widetilde{x}, \omega(\widetilde{x}))}{\partial x_k } \right\}
					v_j
			\right]
			\left[
					v_j^\ast	
					\left\{ \frac{\partial M(\widetilde{x},\omega(\widetilde{x}))}{\partial x_\ell } \right\}
					v
			\right],
	\end{split}
\end{equation*}
and $\lambda_j( \widetilde{x},\omega(\widetilde{x}))$ denotes the $j$th largest eigenvalue of $M(\widetilde{x},\omega(\widetilde{x}))$,
whereas $v_j$, $v$ represent unit eigenvectors corresponding to $\lambda_j( \widetilde{x},\omega(\widetilde{x}))$,
$\lambda ( \widetilde{x},\omega(\widetilde{x}))$.
As shown in the proof of \cite[Theorem 6.1]{Mengi2017}, the term $\Re (H_j(\widetilde{x},\omega(\widetilde{x})))$ is positive semi-definite
implying
\[
	\lambda_{\max} \left(   \nabla^2 [ {\mathcal D}(A(\widetilde{x})) ]^2  \right)	
		\;\;	\leq	\;\;
	\lambda_{\max} ( H(\widetilde{x},\omega(\widetilde{x})) )
		\;\;	\leq	\;\;
		\lambda_{\max} \left( \nabla^2_{xx} M(\widetilde{x},\omega(\widetilde{x})) \right),
\] 
where for the last inequality we again refer to the proof of \cite[Theorem 6.1]{Mengi2017}.
\end{proof}

It follows from
\[
	\nabla^2_{xx} M(\widetilde{x},\omega(\widetilde{x}))
		\;\;	=	\;\;
	\nabla^2 \left[ A(\widetilde{x})^\ast A(\widetilde{x}) \right]
			\; + \;
\omega(\widetilde{x}) \cdot \nabla^2 \left[ \ri A(\widetilde{x})  -  \ri A^\ast (\widetilde{x}) \right],
\]
and the inequality $|\omega(\widetilde{x})| \leq 2 \| A(\widetilde{x}) \|_2$ (see \cite[Lemma 2.1]{VanLoan1985}) that
\begin{equation*}
	\begin{split}
	\lambda_{\max} 
	\left(
		\nabla^2_{xx} M(\widetilde{x},\omega(\widetilde{x}))
	\right)
			&	\;\;	\leq	\;\;\;
		\left\| \nabla^2 \left\{ \sum_{j=1}^\kappa  f_j(\widetilde{x}) A_j  \right\}^\ast \left\{ \sum_{j=1}^\kappa  f_j(\widetilde{x}) A_j  \right\}  \right\|_2
				\; +	\\
			&	\quad \quad
					4 \left\{ \sum_{j=1}^\kappa |f_j(\widetilde{x}) |  \| A_j \|_2 \right\}
					\left\{ \sum_{j=1}^\kappa \| \nabla^2 f_j(\widetilde{x}) \|_2 \| A_j \|_2 \right\}	\\
			&	\;\;	\leq	\;
			 2 \left\{ \sum_{j=1}^\kappa  \| \nabla f_j(\widetilde{x}) \|_2  \| A_j \|_2 \right\}^2 
				+	\\
			&	\quad \quad \;
					6 \left\{ \sum_{j=1}^\kappa |f_j(\widetilde{x}) |  \| A_j \|_2 \right\}
					\left\{ \sum_{j=1}^\kappa \| \nabla^2 f_j(\widetilde{x}) \|_2 \| A_j \|_2 \right\}.
	\end{split}
\end{equation*}
Hence, any upper bound on
\begin{equation*}
	\begin{split}
	\max_{x\in \Omega} \;\;	2 g_1(x)^2	+	6 g_0(x) g_2(x),	\quad
{\rm with}	\;\; g_2(x) := \sum_{j=1}^\kappa \| \nabla^2 f_j(x) \|_2 \| A_j \|_2, \hskip 15ex \\
	g_1(x) := \sum_{j=1}^\kappa \| \nabla f_j(x) \|_2 \| A_j \|_2, 	\;\;
	g_0(x) := \sum_{j=1}^\kappa | f_j(x) | \| A_j \|_2
	\end{split}
\end{equation*}
is a theoretically sound choice for $\gamma$ in Theorem \ref{thm:sup_func}.

The expression above may look complicated at first look, however, for instance, in the affine case 
when $A(x) = B_0 + \sum_{j=1}^d x_j B_j$, that is considered widely in the literature,
it leads to the conclusion that 
\[
	\gamma \;\; = \;\;  2 \left\{ \sum_{j=1}^d \| B_j \| \right\}^2
\] 
is a sound choice that can be used in Theorem \ref{thm:sup_func}. 
A slightly tighter bound in this affine case can be obtained by observing
\begin{equation}\label{eq:matrix_forgamma}
	\nabla^2_{xx} M(x,\omega)
		\;	=	\;
	\left[
		\begin{array}{cccc}
			2 B_1^\ast B_1		&	B_1^\ast B_2 + B_2^\ast B_1	&	\dots		&	B_1^\ast B_d  +  B_d^\ast B_1  \\
			B_2^\ast B_1 + B_1^\ast B_2	&	2 B_2^\ast B_2		&			&	B_2^\ast B_d + B_d^\ast B_2	\\
				\vdots							&		&	\ddots	&	\\
			B_d^\ast B_1  +  B_1^\ast B_d		&	B_d^\ast B_2 + B_2^\ast B_d	&		&	2 B_d^\ast B_d
		\end{array}
	\right],
\end{equation}
so $\gamma$ can also be chosen as the largest eigenvalue of the matrix on the right-hand side above.

\begin{remark}\label{rem:nonsmooth_rcase}
It is often assumed so far that $\lambda(\widetilde{x}, \omega)$ has $\omega(\widetilde{x})$ as its
sole global minimizer, which is needed to ensure the twice continuous differentiability of ${\mathcal D}(A(x))$
at $\widetilde{x}$. This assumption is always violated if the matrices $A_1, \dots, A_\kappa$, hence the 
matrix-valued function $A(x)$ for all $x \in \Omega$, are real; in this case the singular value function
has the symmetry $\sigma_{\min}(A(x) - \omega {\rm i} I) = \sigma_{\min}(A(x) + \omega {\rm i} I)$ for all $\omega \in {\mathbb R}$,
implying if $\omega(x)$ is a global minimizer, so is $-\omega(x)$. This symmetry in the global minimizers
in the real case does not cause nondifferentiability, as long as there is a unique nonnegative 
(and a unique nonpositive) global minimizer, as the symmetry is preserved for all $x$.
Hence, all of the discussions and results in this section carry over to the real matrix-valued setting provided 
$\lambda(\widetilde{x}, \omega)$ has a unique nonnegative global minimizer over $\omega$.
\end{remark}

\subsection{The Algorithm}
The algorithm that we employ for small-scale problems is borrowed from \cite{Breiman1993, Mengi2014}.
It is presented in Algorithm \ref{small_alg} below for completeness. 
At the $k$th iteration, the piecewise quadratic function $\min \{ q(x; x^{(j)}) \; | \; j = 1,\dots, k-1 \}$ 
that lies above $[{\mathcal D}(A(x))]^2$ globally is maximized. Then the piecewise quadratic function 
is refined with the addition of a new quadratic piece, namely the upper support function $q(x; x^{(k)})$ about 
the computed maximizer $x^{(k)}$. Every convergent subsequence of the sequence $\{ x^{(k)} \}$ is 
guaranteed to converge to a global maximizer of $[{\mathcal D}(A(x))]^2$ \cite[Theorem 8.1]{Mengi2014}.
We observe in practice that this convergence occurs  typically at a linear rate, but a formal proof of this observation is open.

Solution of the subproblem in line \ref{subproblem}, that is the maximization of the smallest of $k-1$ 
quadratic functions with constant curvature, turns out to be challenging. It is possible to pose this as a 
bunch of nonconvex quadratic programming problems. For a few parameters, these quadratic programming 
problems are tractable and can be solved efficiently \cite{Breiman1993, Mengi2014}. In practice, 
we use \texttt{eigopt} \cite{Mengi2018b}, which is a Matlab implementation of this algorithm.

\begin{algorithm}
 \begin{algorithmic}[1]
\REQUIRE{ The matrix-valued function $A(x)$ of the form (\ref{eq:prob_defn}) and the feasible region $\widetilde{\Omega}$. }
\ENSURE{ The sequence $\{ x^{(k)} \}$. }
\STATE $x^{(1)} \gets$ a random point in $\widetilde{\Omega}$.
\STATE Calculate $[{\mathcal D}(A(x^{(1)}))]^2$ and $\nabla [{\mathcal D}(A(x^{(1)}))]^2$. \label{fun_eval}
\FOR{$k \; = \; 2, \; 3, \; \dots$}
	\STATE $x^{(k)} \gets \arg \max_{x \in \widetilde{\Omega}} \; \min \{ q(x; x^{(j)}) \; | \; j = 1,\dots, k-1 \}$. \label{subproblem}
	\STATE Calculate $[{\mathcal D}(A(x^{(k)}))]^2$ and $\nabla [{\mathcal D}(A(x^{(k)}))]^2$. \label{fun_eval2}
\ENDFOR
 \end{algorithmic}
\caption{Solution of Small-Scale Problems}
\label{small_alg}
\end{algorithm}

\subsection{General Case Without Uniform Stability}
Consider the problem at our hands with the full generality, that is consider
\[		
				\max_{x \in \widetilde{\Omega}}	\;	[{\mathcal D}(A(x)) ]^2
					\;\;	=	\;\;
				\max_{x \in \widetilde{\Omega}}	\: 
				\min_{\omega \in {\mathbb R}} \;
				\left\{ 
					\begin{array}{ll} 
				\lambda ( x,\omega )		\quad\quad	&	{\rm if} \; \Lambda(A(x)) \cap {\mathbb C}^+ = \emptyset	\\
				0							\quad\quad	&	{\rm otherwise}	
					 \end{array} \;\; .
				\right.
\]
The global property that the left-hand derivatives are greater than or equal to the right-hand derivatives indicated
in part (i) of Theorem \ref{thm:der_D} is lost at $x$ values where $A(x)$ has none of the eigenvalues on the open
right-half of the complex plane, but one or more eigenvalues on the imaginary axis
(for an illustrative example see the top left of Figure \ref{fig:small_random}; see, in particular, the distance function 
over there depending on one parameter near 0.4, where the distance becomes zero). A consequence is that the quadratic 
function $q(x; \widetilde{x})$ constructed about $\widetilde{x}$ such that $A(\widetilde{x})$ is not asymptotically stable is 
not necessarily an upper support function, that is $[{\mathcal D}(A(x))]^2 \leq q(x; \widetilde{x})$ does not necessarily 
hold for all $x \in \widetilde{\Omega}$. However, such points where asymptotic stability is lost
are far away from global maximizers that we are seeking. Indeed, if $\gamma$ is chosen large enough, 
the quadratic functions about these points still bound the distance to instability function from above locally in a 
neighborhood of each global maximizer. This correct representation around the 
global maximizers is sufficient for the convergence of the algorithm to the globally maximal value. Formally, it can be 
shown that there exists $\gamma$ such that every convergent subsequence of the sequence $\{ x^{(k)} \}$ 
generated by Algorithm \ref{small_alg} converges to a global maximizer of  $[ {\mathcal D}(A(x)) ]^2$ over $x \in \widetilde{\Omega}$.
In the affine case, when $A(x) = B_0 + \sum_{j=1}^\kappa x_j B_j$, the choice of $\gamma$ set equal to the largest
eigenvalue of the matrix in (\ref{eq:matrix_forgamma}) works well in practice in our experience.

\subsection{Numerical Results}
We present numerical results on three sets of 
examples\footnote{Available at \url{http://home.ku.edu.tr/~emengi/software/max_di/Data_&_Updates.html}} 
all of which concern the stabilization by output feedback
control problem. These results are obtained by applications of a Matlab implementation of Algorithm \ref{small_alg}
that is publicly available \cite{Mengi2018}. 

The first set involves a $4\times 4$ random example with
\begin{equation}\label{eq:random_matrix}
	\begin{split}
	A
		& =
	\left[
		\begin{array}{rrrr}
		0.1377   &	 0.3188  &  3.5784  &  0.7254	\\
		1.8339  &	 -1.7077 &  2.7694  & -0.0631	\\
		-2.2588 &   -0.4336 &  -1.7499 &   0.7147	\\
		0.8622 &   0.3426 &   3.0349  & -0.6050	
		\end{array}
	\right],	\hskip 11ex \\
	\quad
	B
		& =
	\left[
		\begin{array}{rr}
			-0.1241	&	0.4889	\\
			1.4897	&	1.0347	\\
			1.4090	&	0.7269	\\
			1.4172	&	-0.3034	\\
		\end{array}
	\right],
	\quad
	C
		=
	\left[
		\begin{array}{crcr}
			0.6715	&	-1.2075	&	0.7172	&	1.6302		\\
			0.2939	&	-0.7873	&	0.8884	&	-1.1471		\\
		\end{array}
	\right].
	\end{split}
\end{equation}
Denoting the $j$th columns of $B, C^T$ with $b_j, c_j$,
we first maximize ${\mathcal D}(A + k b_1 c_1^T)$ over $k \in [-5, 5]$ using Algorithm \ref{small_alg}.
The matrix $A$ is stable, indeed ${\mathcal D}(A) = 0.2063$ and the rightmost eigenvalues of $A$
are $-0.3470 \pm 2.3375 {\rm i}$. On the other hand, the maximized distance is given by
${\mathcal D}(A + k^{(1)} b_1 c_1^T) = 0.8385$ for $k^{(1)} = -0.9025$, furthermore the rightmost 
eigenvalues of $A + k^{(1)} b_1 c_1^T$ are $-1.0664 \pm 3.3377 {\rm i}$. The distance to instability
${\mathcal D}(A + k b_1 c_1^T)$ is plotted as a function of $k$ on top left in Figure \ref{fig:small_random}.
The nonsmoothness of ${\mathcal D}(A + k b_1 c_1^T)$ at $k = k^{(1)}$ is evident in this figure, which is caused
by the fact that the distance to instability of $A + k^{(1)} b_1 c_1^T$ is attained at two distinct negative $\omega$ 
values\footnote{Note that, as the matrices are real, the singular value function 
$\sigma(\omega) := \sigma_{\min} (A + k b_1 c_1^T - \omega {\rm i} I)$ is symmetric with respect to the origin. 
This means that the minimizers of $\sigma(\omega)$ appear in plus, minus pairs. 
As discussed in Remark \ref{rem:nonsmooth_rcase}, this does not cause nonsmoothness. However, 
attainment of the minimum at two distinct negative (or positive) $\omega$ values causes nonsmoothness.}. 
The bottom left portion of Figure \ref{fig:small_random} provides a plot of 
$\sigma_{\min}(A + k^{(1)} b_1 c_1^T - \omega {\rm i} I)$ with respect to $\omega$, which confirms that the 
singular value function has two negative global minimizers.

Next we maximize ${\mathcal D}(A + k_1 b_1 c_1^T + k_2 b_2 c_2^T)$ over $k_1,k_2 \in [-5, 5]$.
The maximized distance ${\mathcal D}(A + k^{(2)}_1 b_1 c_1^T +  k^{(2)}_2 b_2 c_2^T) = 0.9654$
attained at $k^{(2)} = ( k^{(2)}_1, k^{(2)}_2 ) = (-1.4489,0.5353)$ is improved compared to the rank one case,
while the rightmost eigenvalues $-1.4150 \pm 3.9805{\rm i}$ of $A + k^{(2)}_1 b_1 c_1^T +  k^{(2)}_2 b_2 c_2^T$
are located further to the left. The contour diagram of ${\mathcal D}(A + k_1 b_1 c_1^T + k_2 b_2 c_2^T)$ over $(k_1, k_2)$ 
is depicted on top right in Figure \ref{fig:small_random}.
Once again the distance function is nonsmooth at the maximizer, since the 
smallest singular value of $A +  k^{(2)}_1 b_1 c_1^T + k^{(2)}_2 b_2 c_2^T - \omega {\rm i} I$ is minimized 
globally at two distinct negative $\omega$  values, which is shown at bottom right in Figure \ref{fig:small_random}.

The second set concerns the turbo-generator example in \cite[Appendix E]{Hung1982}, used as a test 
example also in \cite{Burke2003}. This example involves the robust stabilization of $A + BKC$ over 
$K \in {\mathbb R}^{2\times 2}$, where $A \in {\mathbb R}^{10\times 10}, B \in {\mathbb R}^{10\times 2}, C\in {\mathbb R}^{2\times 10}$.
We maximize 
\begin{enumerate}
	\item[\bf (i)] ${\mathcal D}(A + k b_2 c_2^T)$ over $k \in [-0.5, 0.1]$, and 
	\item[\bf (ii)] ${\mathcal D}(A + k_{1} b_2 c_2^T + k_{2} b_2 c_1^T)$
	over $k_{1}, k_{2} \in [-0.5,0.1]$. 
\end{enumerate}
The original matrix $A$ is stable with ${\mathcal D}(A) = 0.0077$, whereas ${\mathcal D}(A + k^{(1)} b_2 c_2^T) = 0.0430$
at the global maximizer $k^{(1)} = -0.3990$ of (i), and ${\mathcal D}(A + k^{(2)}_1 b_2 c_2^T + k^{(2)}_2 b_2 c_1^T) = 0.0722$
at the global maximizer $k^{(2)} = (k^{(2)}_1, k^{(2)}_2) = (-0.1847, -0.1644)$ of (ii). The rightmost eigenvalues of 
$A,  A + k^{(1)} b_2 c_2^T, A + k^{(2)}_1 b_2 c_2^T + k^{(2)}_2 b_2 c_1^T$ are located
at $-0.2345$, $-0.3583 \pm 6.6403$, $-0.5019$, respectively. The results are displayed in Figure \ref{fig:small_turbo}.
In the one parameter case, the figure on top left indicates the existence of two maximizers, both of which are nonsmooth, 
and only one of which is a global maximizer. The algorithm correctly converges to the global maximizer. 
The two parameter case is highly nonsmooth, indeed 
$\sigma_{\min}(A + k^{(2)}_1 b_2 c_2^T + k^{(2)}_2 b_2 c_1^T - \omega {\rm i} I)$
is minimized at three distinct nonpositive $\omega$ values, as shown at bottom right in Figure \ref{fig:small_turbo}. 
This is reflected into the contour diagram on top right as steep changes close to the global maximizer.

The nonsmoothness at the maximizer does not always occur. In Figure \ref{fig:small_random2} on the left,
for random $A \in {\mathbb R}^{200\times 200}$ and $b,c \in {\mathbb R}^{200}$, the plot of ${\mathcal D}(A + k b c^T)$
with respect to $k \in [-0.1, 0.1]$ is illustrated. The distance function is smooth at the computed maximizer
$k^{(1)} =  0.0144$. Indeed, as depicted on the right in Figure \ref{fig:small_random2}, the singular value function
$\sigma_{\min} (A + k^{(1)} b c^T - \omega {\rm i} I)$  attains its minimum at a unique negative $\omega$ value. 
Our numerical experiments indicate both the smooth 
and the nonsmooth maximizers are possible. Based on our numerical experiments, it is not possible to call 
one of these cases generic and the other non-generic.

In each of these examples, $\gamma$ is set equal to the largest eigenvalue of the matrix in 
(\ref{eq:matrix_forgamma}). The precise values are listed in Table \ref{tab:gamma}.

\begin{figure}
		\hskip -3ex
		\begin{tabular}{ll}
		\includegraphics[width=.49\textwidth]{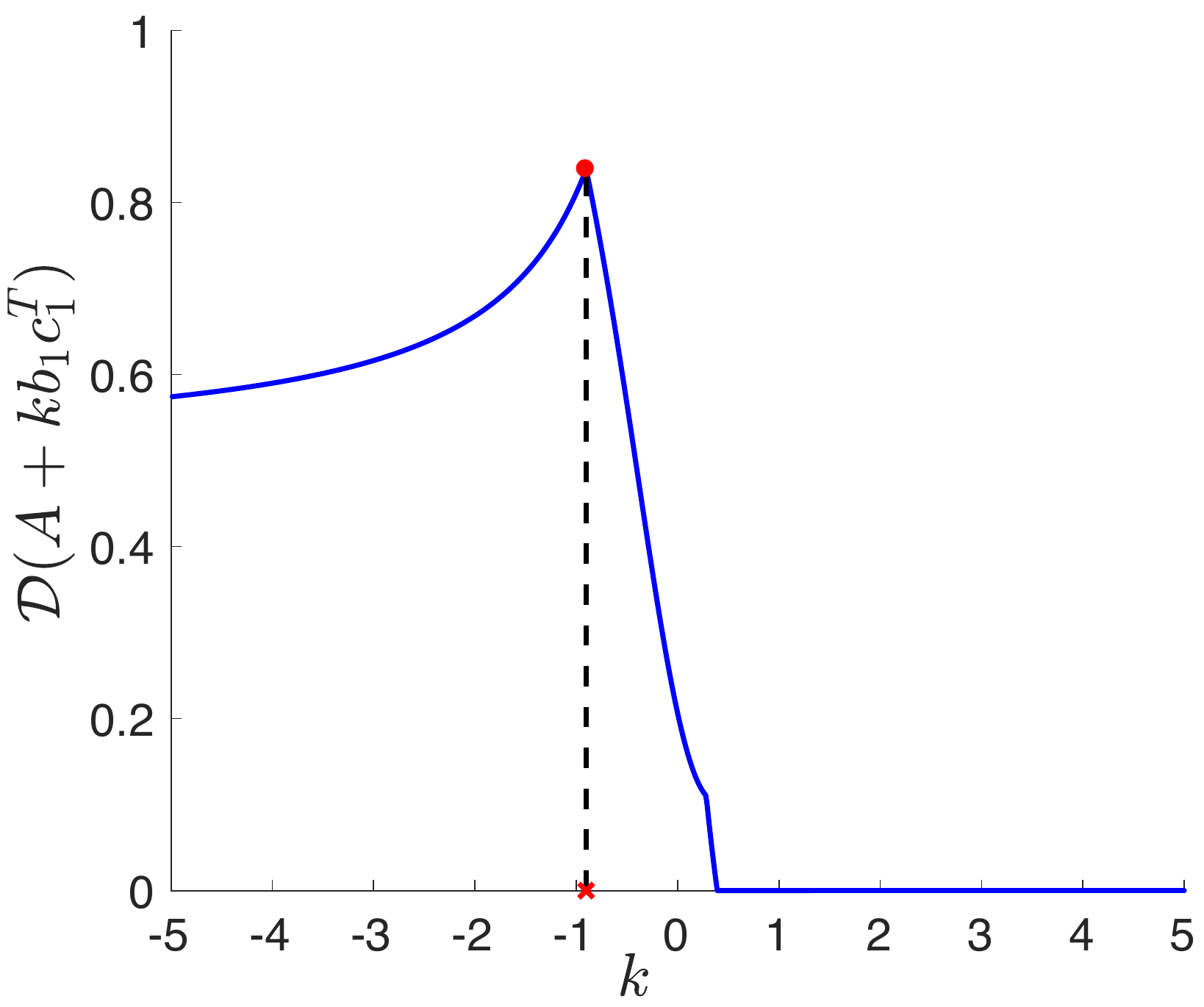} 	&
		\includegraphics[width=.49\textwidth]{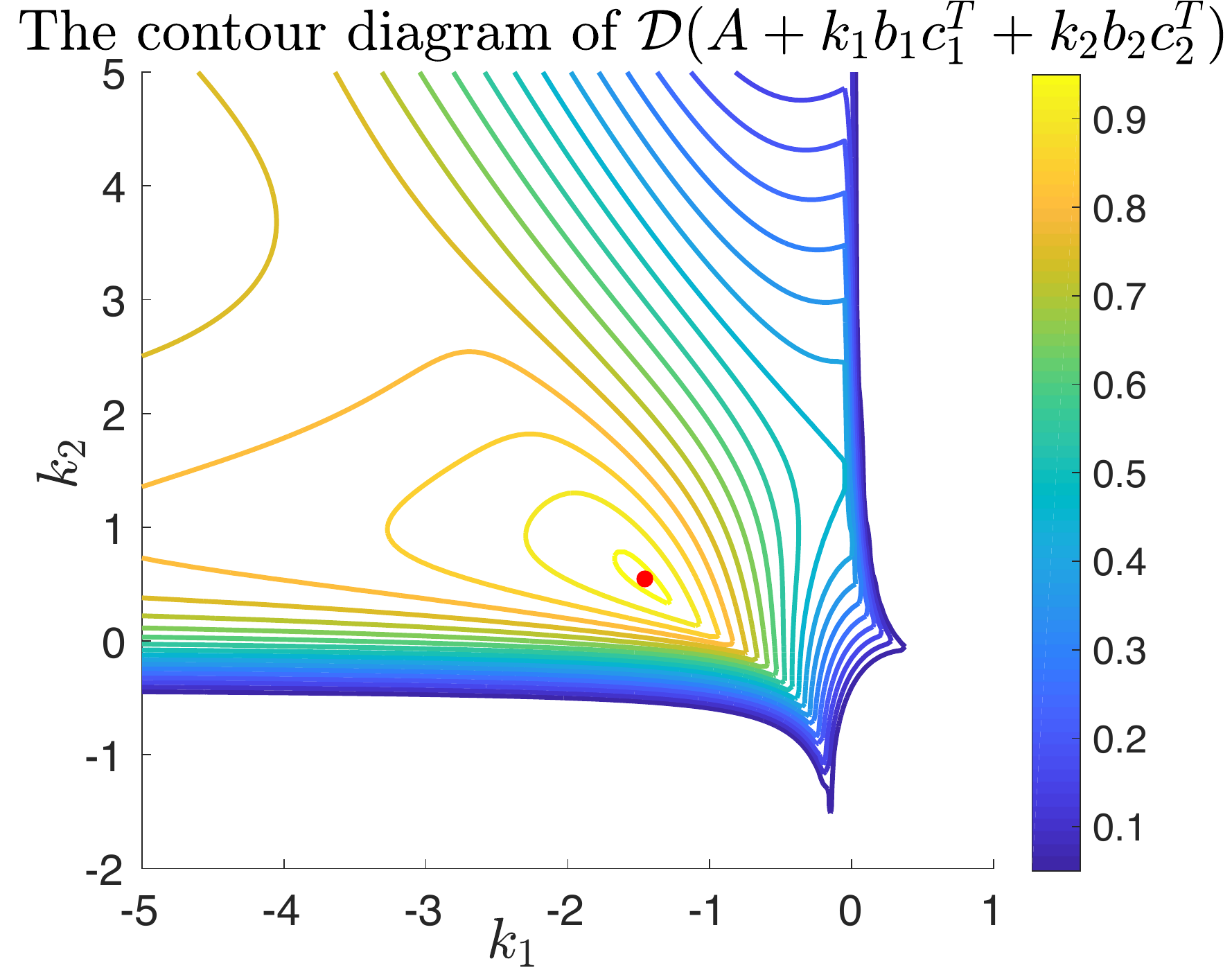} 	\\
		\includegraphics[width=.50\textwidth]{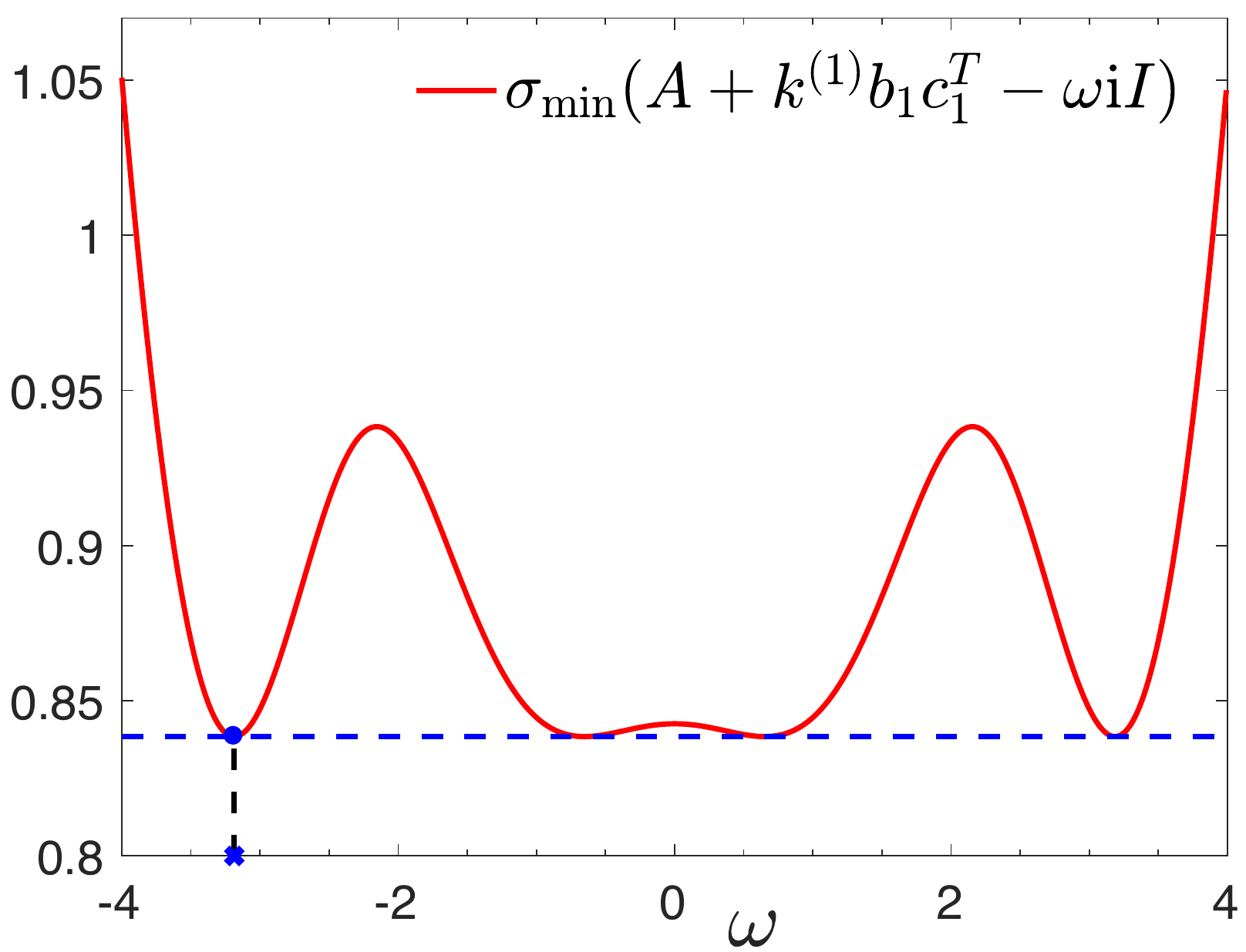} 	&
		\includegraphics[width=.47\textwidth]{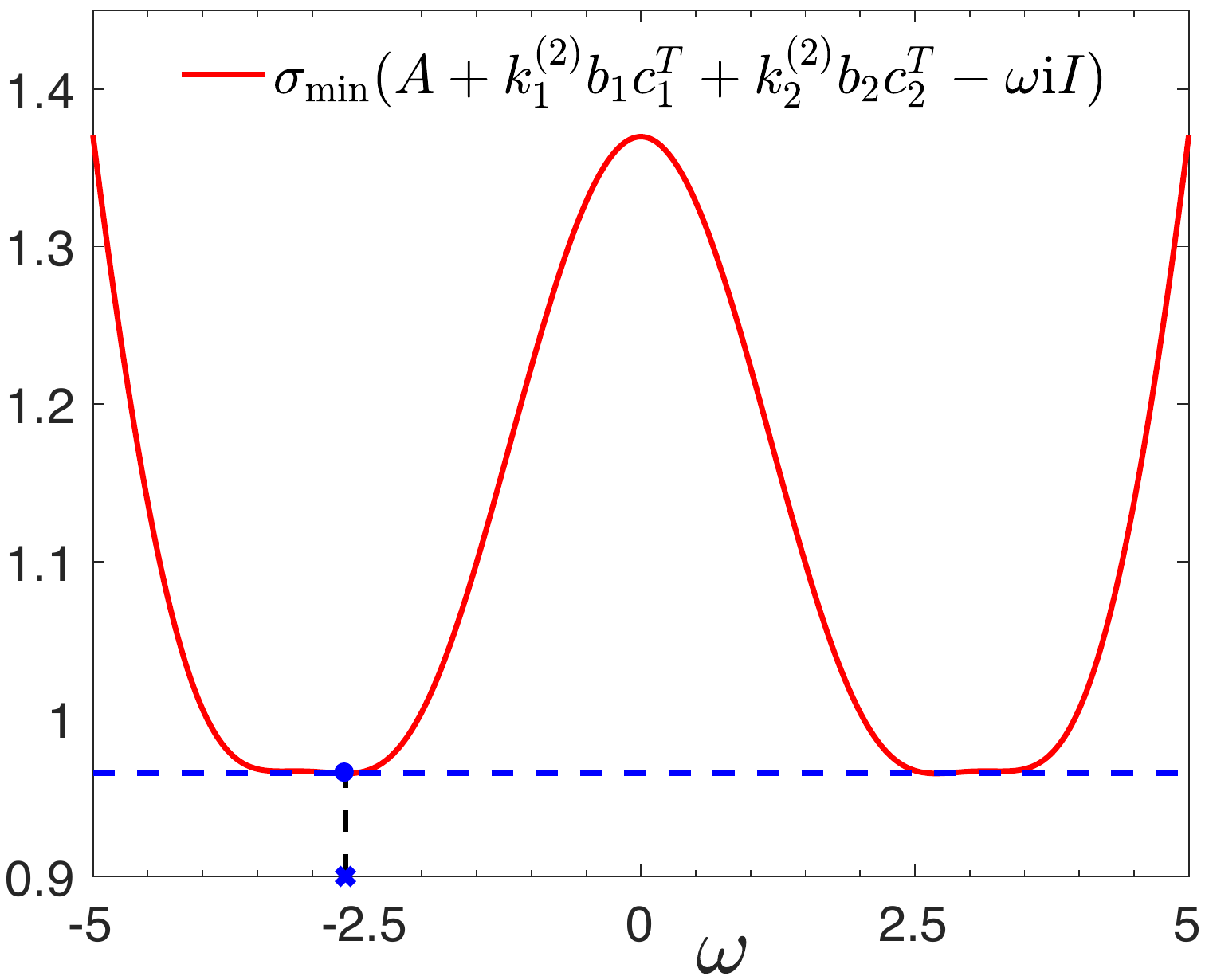}
		\end{tabular}
	  \caption{
	  		The plots concern the random example in (\ref{eq:random_matrix}).
			\textbf{\emph{(Top Left)}} The plot of ${\mathcal D}(A + k b_1 c_1^T)$ as a function of $k \in [-5,5]$.
			The cross at the horizontal axis marks the computed global
			maximizer $k^{(1)} =-0.9025$ by Algorithm \ref{small_alg}, whereas the dot marks 
			$(k^{(1)}, {\mathcal D}(A + k^{(1)} b_1 c_1^T))$.
			\textbf{\emph{(Bottom Left)}} The plot of the singular value $\sigma_{\min}(A + k^{(1)} b_1 c_1^T - \omega {\rm i} I)$
			as a function of $\omega$. The cross at the horizontal axis marks 
			$\omega_\ast = -3.1860$, one of the $\omega$ values minimizing $\sigma_{\min}(A + k^{(1)} b_1 c_1^T - \omega {\rm i} I)$ 
			globally, while the dot marks $(\omega_\ast, \sigma_{\min}(A + k^{(1)} b_1 c_1^T - \omega_\ast {\rm i} I))$.
			The dashed horizontal line is the highest horizontal line that bounds the graph of the singular value function from below.
			\textbf{\emph{(Top Right)}} The contour diagram of ${\mathcal D}(A + k_1 b_1 c_1^T + k_2 b_2 c_2^T)$ with respect to
			$k_1 \in [-5,5]$ in the horizontal axis and $k_2 \in [-5,5]$ in the vertical axis. 
			The global maximizer $( k^{(2)}_1, k^{(2)}_2 ) = (-1.4489,0.5353) $ is marked 
			with a dot.
			\textbf{\emph{(Bottom Right)}} The plot of the singular value 
			$\sigma_{\min}(A + k^{(2)}_1 b_1 c_1^T + k^{(2)}_2 b_2 c_2^T - \omega {\rm i} I)$
			as a function of $\omega$. The global minimizer marked with a cross in the horizontal 
			axis is $\omega_\ast = -2.6958$. The dot and the dashed line stand for the quantities
			same as in bottom left.
	   	     }
	     \label{fig:small_random}
\end{figure}

\begin{figure}
		\hskip -3ex
		\begin{tabular}{ll}
		\includegraphics[width=.49\textwidth]{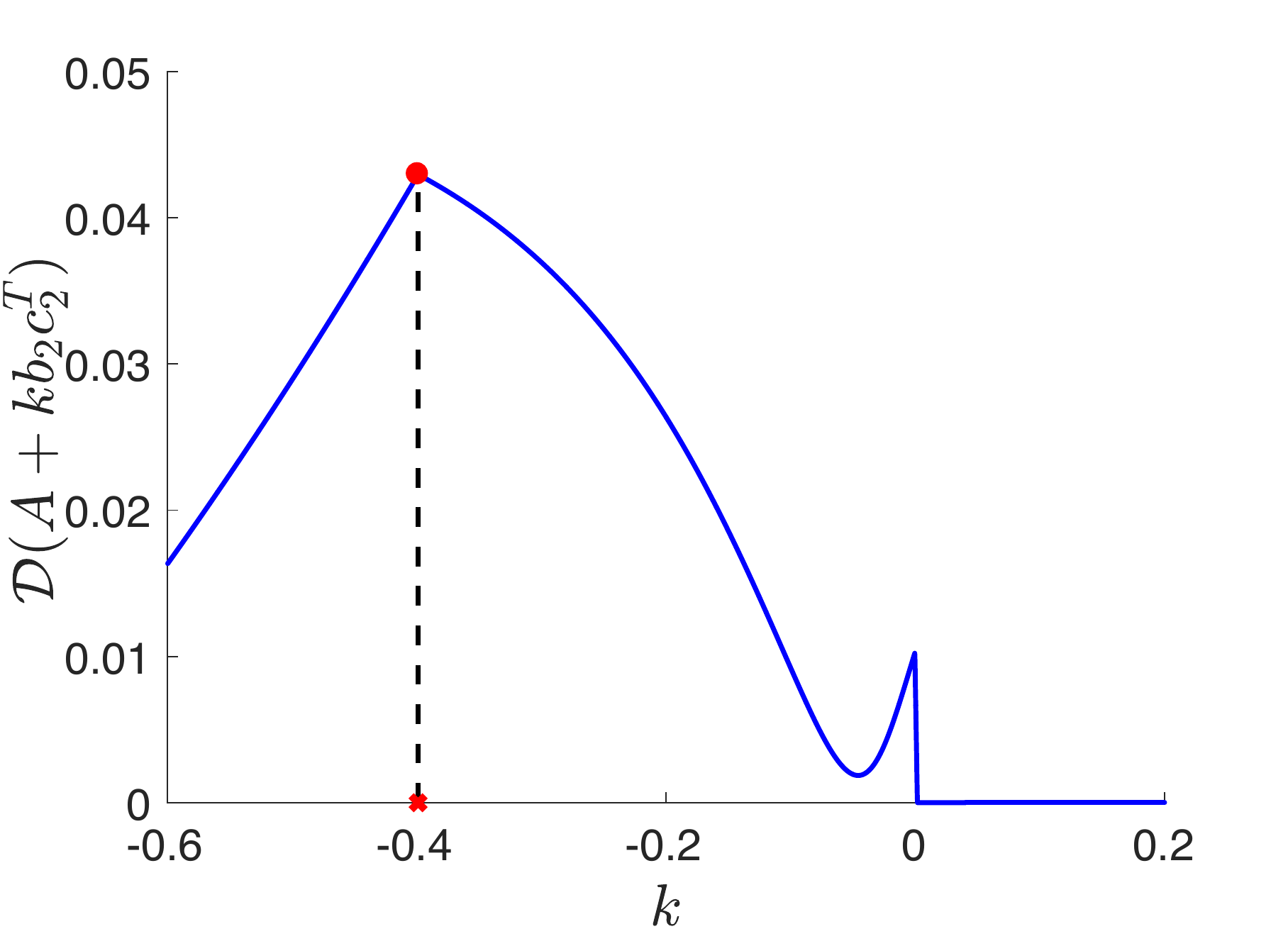} 	&
		\includegraphics[width=.49\textwidth]{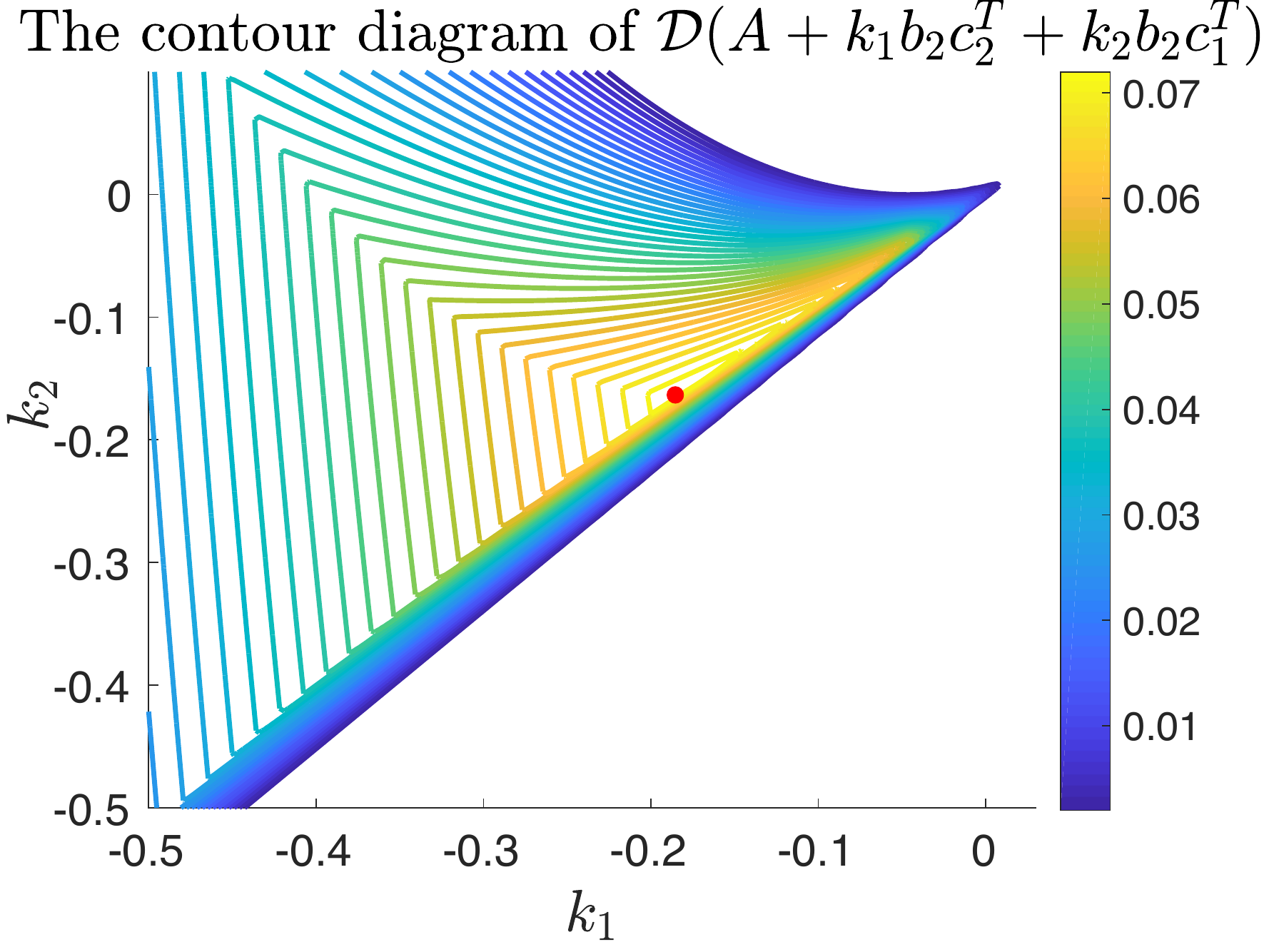} 	\\
		\includegraphics[width=.48\textwidth]{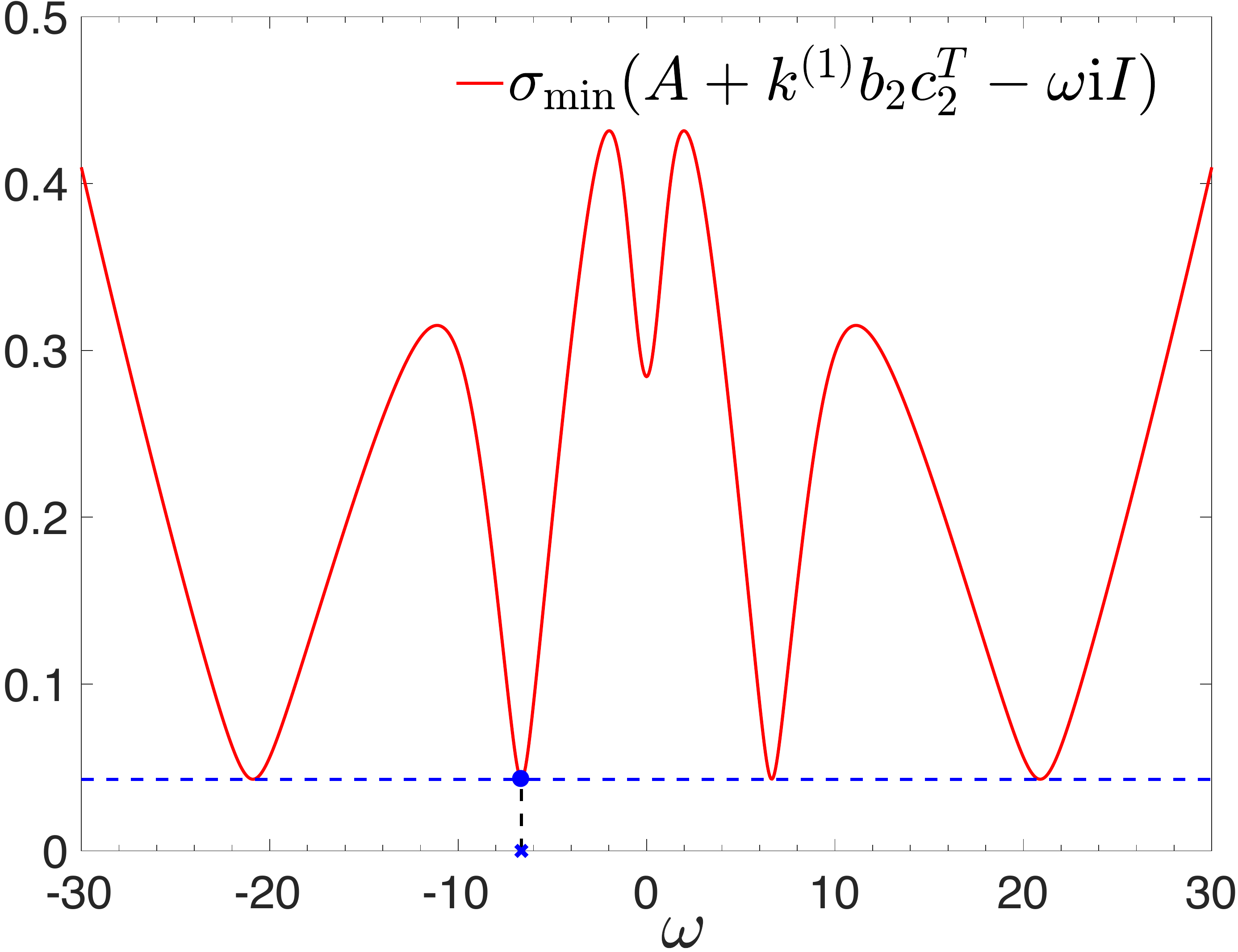} 	&
		\includegraphics[width=.49\textwidth]{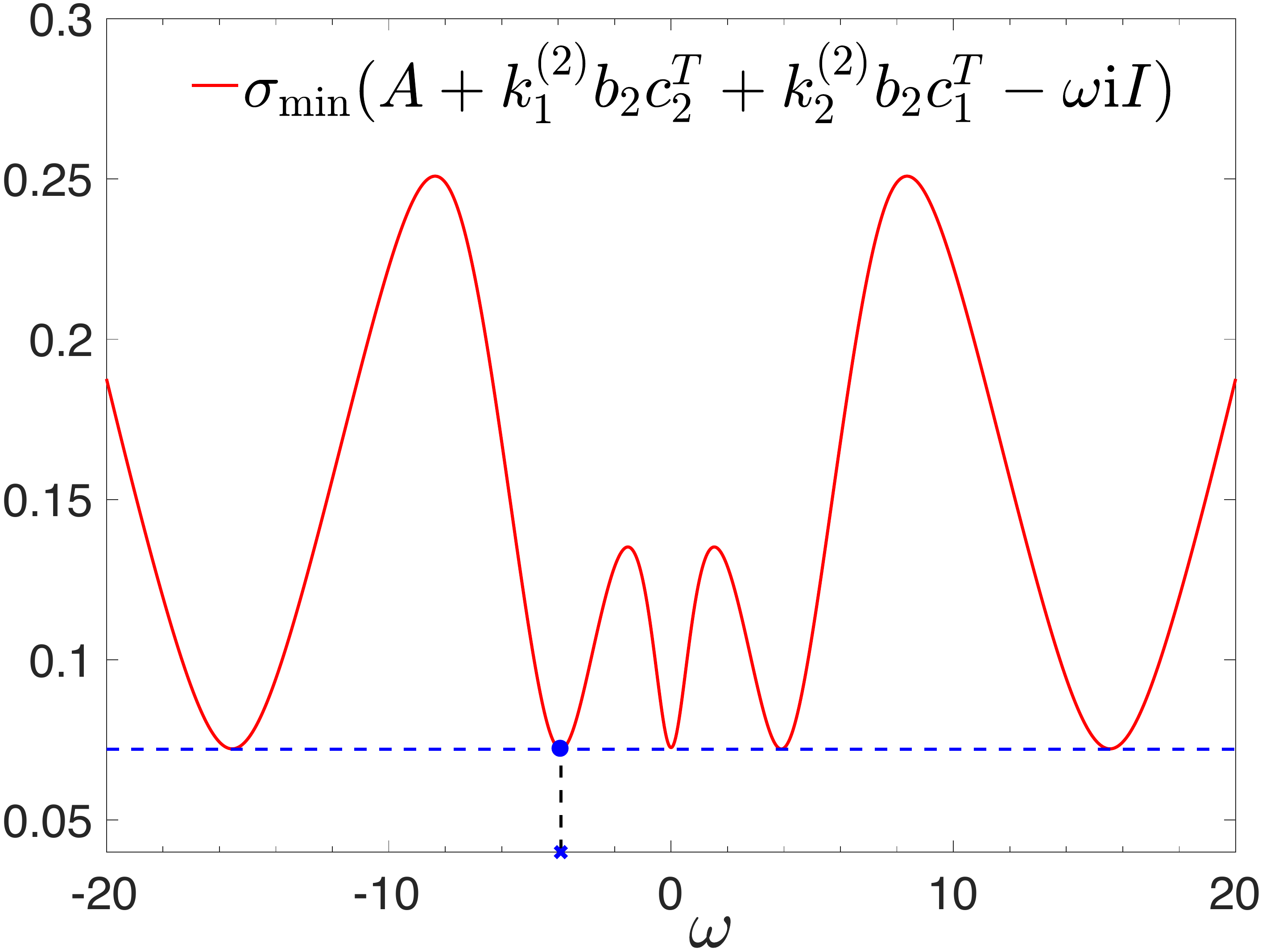}
		\end{tabular}
	  \caption{ 
	  		The plots are analogous to Figure \ref{fig:small_random}, but
	  		for the turbo-generator example \cite[Appendix E]{Hung1982}.
			Here, the one and two parameter cases concern the maximization of ${\mathcal D}(A + k b_2 c_2^T)$
			over $k \in [-0.5,0.1]$ and ${\mathcal D}(A + k_{1} b_2 c_2^T + k_{2} b_2 c_1^T)$ over 
			$k_{1}, k_{2} \in [-0.5,0.1]$, respectively. The maximizers for the top row are $k^{(1)} = -0.3990$ and 
			$( k^{(2)}_1, k^{(2)}_2 ) = (-0.1847, -0.1644)$, whereas the minimizers of the singular value 
			functions at the bottom row are $-6.6423$ and $-3.9057$ for the left-hand and right-hand plots, 
			respectively.
	   	     }
	     \label{fig:small_turbo}
\end{figure}

\begin{figure}
		\hskip -2ex
		\begin{tabular}{ll}
		\includegraphics[width=.49\textwidth]{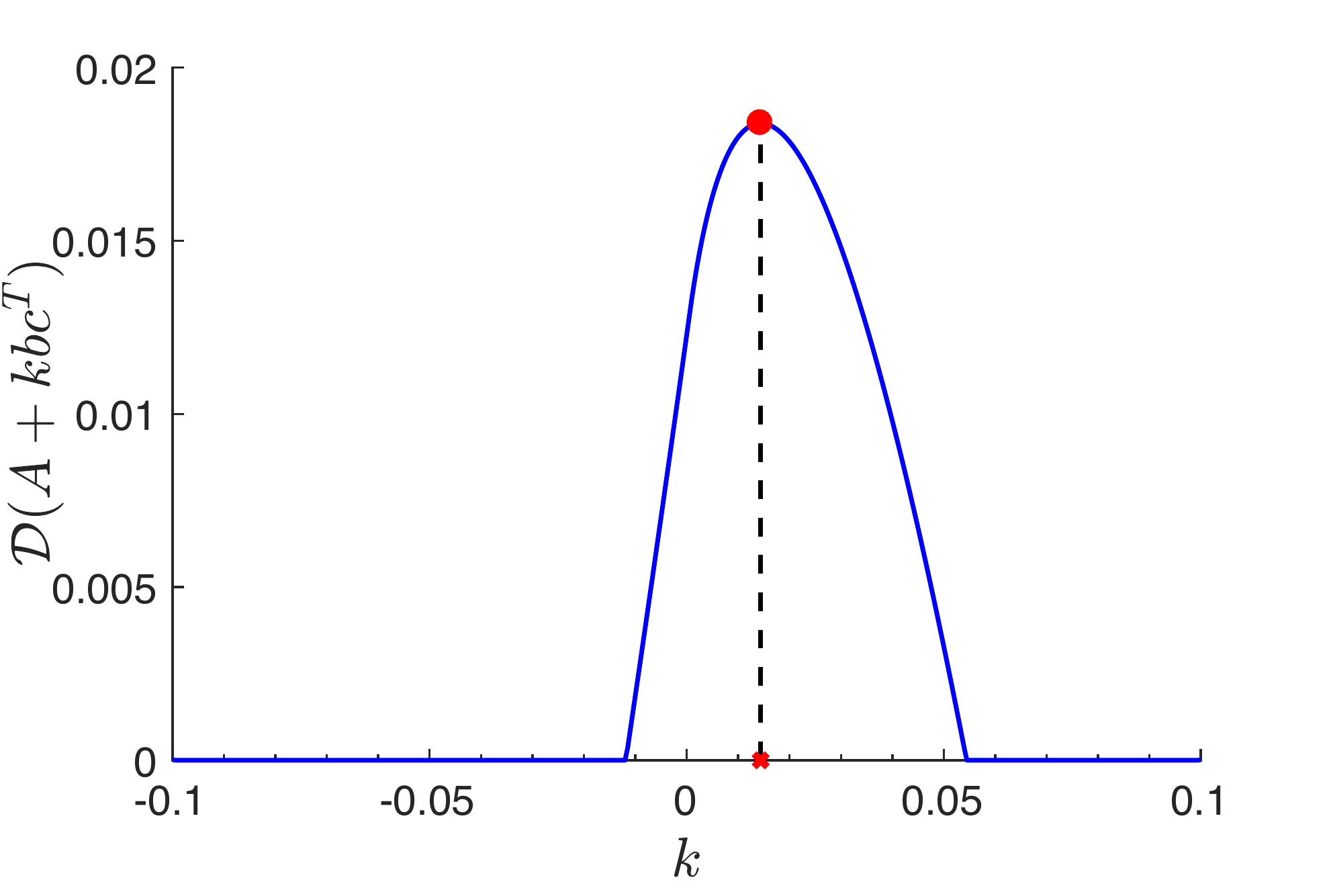} 	&
		\includegraphics[width=.49\textwidth]{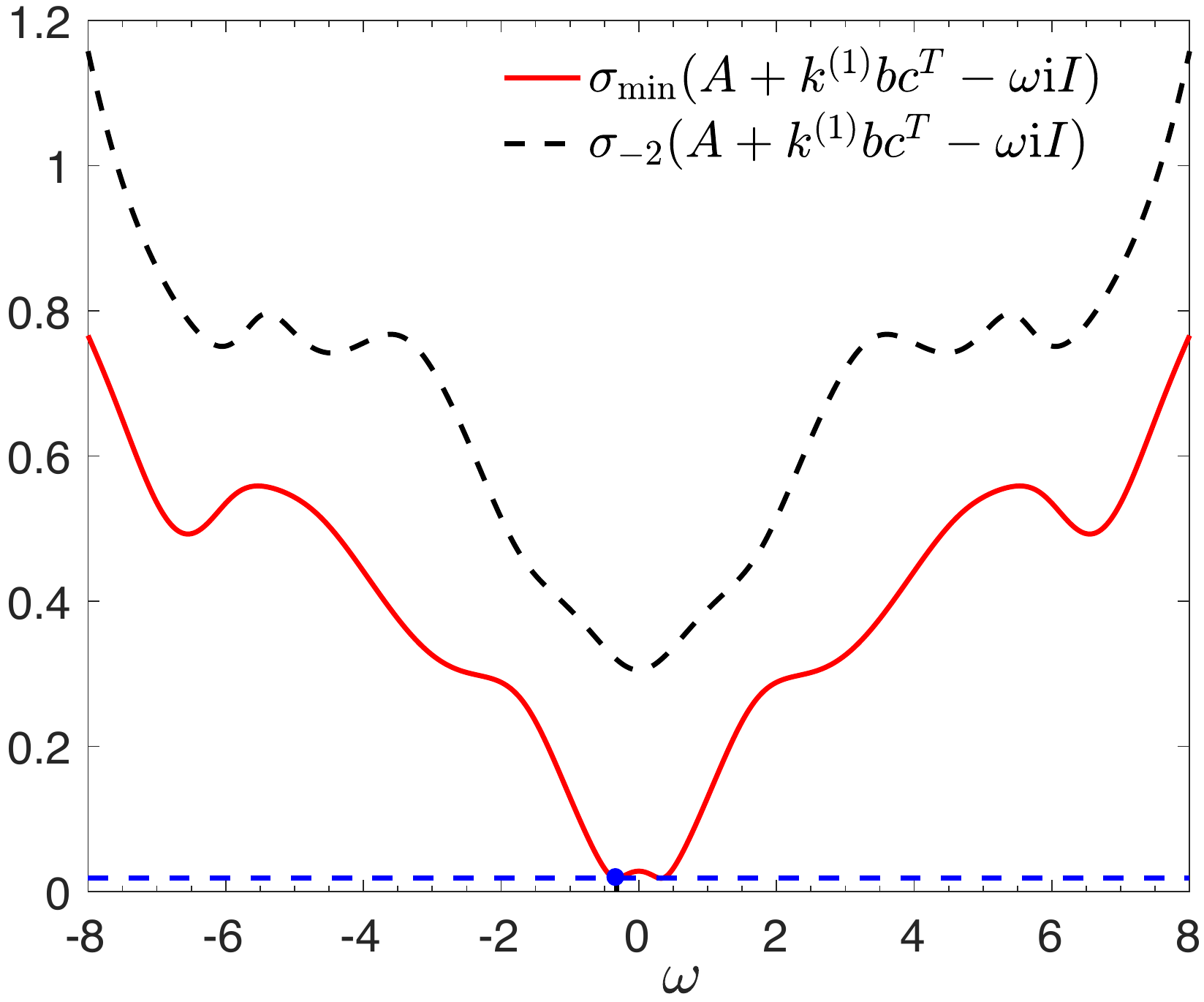} 	
		\end{tabular}
	  \caption{
	  		\textbf{\emph{(Left)}} The plot of the distance to instability ${\mathcal D}(A + k b c^T)$
			with respect to $k \in [-0.1,0.1]$ for a random $A \in {\mathbb R}^{200\times 200}$ and $b,c \in {\mathbb R}^{200}$.
			The distance function is smooth at the maximizer $k^{(1)} = 0.0144$ (marked with a cross on the
			horizontal axis), and attains the maximal value ${\mathcal D}(A + k^{(1)} b c^T) = 0.0184$. 
			\textbf{\emph{(Right)}} The plots of the smallest singular value function
			$\sigma_{\min}(A + k^{(1)} b c^T - \omega {\rm i} I)$ (solid curve)
			and the second smallest singular vale function
			$\sigma_{-2}(A + k^{(1)} b c^T - \omega {\rm i} I)$ (dashed curve) with respect to $\omega$.
			The point $\omega_\ast = -0.3231$ is the unique negative global minimizer of 
			the smallest singular value function, where the smallest singular value takes the value 0.0184.
	   	     }
	     \label{fig:small_random2}
\end{figure}

\begin{table}
	\begin{center}
	\begin{tabular}{|c||cc|}
		\hline
										&	$d = 1$	&	$d = 2$	\\
		\hline
		\hline
		Random $4\times 4$					&	63		&	64		\\
		Turbo-Generator					&	137		&	291		\\
		Random $200\times 200$				& 	72836	&	-		\\
		\hline
	\end{tabular}
	\end{center}
	\caption{
		$\gamma$ values for the random $4\times 4$ example (i.e., the example in (\ref{eq:random_matrix})), the turbo-generator example,
		and the random $200\times 200$ example.
	}
	\label{tab:gamma}
\end{table}


\section{A Subspace Framework for Large-Scale Problems}\label{sec:large_prob}

The ideas here are inspired by \cite{Kangal2015}. But the eigenvalue optimization problems considered in that work
concerns minimization or maximization of the $J$th largest eigenvalue, whereas the problem here, namely (\ref{eq:prob_defn}),
is a maximin problem. At first look, this looks challenging, but it turns out that the ideas over there can be extended
to our setting. 

Our projected reduced problems will be of the form
\begin{equation}\label{eq:red_prob}
	\max_{x \in \Omega}	\: \min_{z \in {\mathbb C}^+} \; 
					\sigma_{\min} (A^V(x) - z V)
\end{equation}
where
\begin{equation}\label{eq:red_mat_funs}
	A^V(x) \;\; := \;\; f_1(x) A_1 V + f_2(x) A_2 V +	\dots		+	f_\kappa(x) A_\kappa V
\end{equation}
and $V \in {\mathbb C}^{n\times \ell}$ with $n \gg \ell$  has orthonormal columns.
Hence we resort to one-sided projections. Since the problem at our hand is non-Hermitian, it may be tempting to use
two-sided projections. Unfortunately, it is difficult to conclude with strong convergence properties for such a two-sided
framework and its practical reliability is also in doubt. This is basically due to the loss of a monotonicity property discussed 
in the next section with the two-sided projections. 

The subspace framework is presented formally in Algorithm \ref{alg} below, where we use the notation
\begin{equation}\label{eq:red_inner}
	{\mathcal D}^{\mathcal V}(A(x)) \;\; :=  \;\;	
				\min_{z \in {\mathbb C}^+} \; 
					\sigma_{\min} (A^V(x) - z V),
\end{equation}
${\mathcal V} := {\rm Col}(V)$ (i.e., ${\mathcal V}$ is the column space of $V$),
and ${\rm orth}(C)$ stands for a matrix whose columns form an orthonormal basis for the column
space of the matrix $C$. Note that when ${\mathcal V} = {\mathbb C}^n$ and $A(x)$ is asymptotically stable 
(i.e., when $\Lambda(A(x)) \cap {\mathbb C}^+ = \emptyset$), the minimization problem in (\ref{eq:red_inner}) 
is attained on the imaginary axis due to the maximum modulus principle and we have 
${\mathcal D}^{\mathcal V}(A(x)) = {\mathcal D}(A(x))$. But this attainment property on the imaginary axis is 
not necessarily true in the rectangular case when ${\mathcal V}$ is a strict subspace of ${\mathbb C}^n$. 

At every iteration, the subspace framework first solves a projected small-scale problem. 
For this purpose, we benefit from the ideas in the previous section if there are only a few parameters; 
some details are discussed in Section \ref{sec:sf_implement}. 
If the problem depends on more than a few parameters, then a viable choice for the solution of the small-scale problem 
is HIFOO \cite{Burke2006}. Throughout the rest, we assume that the small-scale problem is solved globally. 
At a maximizer of this small-scale problem, if the full problem is asymptotically stable, then we compute the distance to 
instability of the full problem, in particular we retrieve the $\omega$ value where this distance is attained. We expand the 
subspace with the addition of a right singular vector corresponding to the smallest singular value at the optimal values of 
$x$ and $\omega$. On the other hand, if the full problem is not asymptotically stable at the maximizer of the small-scale
problem, then the full problem at the maximizer has an eigenvalue on the closed right-half of the complex plane.
The subspace is expanded with the inclusion of an eigenvector corresponding to such an eigenvalue.

\begin{algorithm}
 \begin{algorithmic}[1]
\REQUIRE{ The matrix-valued function $A(x)$ of the form (\ref{eq:prob_defn}) with the feasible region $\widetilde{\Omega}$.}
\ENSURE{ The sequences $\{x^{(\ell)} \}$, $\{ z^{(\ell)} \}$.}
\STATE $x^{(1)} \gets$ a random point in $\widetilde{\Omega}$.
\IF{$\Lambda(A(x^{(1)})) \cap {\mathbb C}^+ = \emptyset$}
	\STATE $z^{(1)} \gets {\rm i} \cdot \arg \min_{\omega \in {\mathbb R}} \; \sigma_{\min} (A(x^{(1)}) - \omega \ri I)$. \label{large_dinstab}
	\STATE $V_1	\;	\gets \;$ a unit right singular vector corresponding to $\sigma_{\min} (A(x^{(1)}) - z^{(1)} I)$. 
\ELSE
	\STATE $z^{(1)} \; \gets \;$ an eigenvalue in $\Lambda(A(x^{(1)})) \cap {\mathbb C}^+$.
	\STATE $V_{1}	\;	\gets \;$ a unit eigenvector corresponding to the eigenvalue $z^{(1)}$ of $A(x^{(1)})$.
\ENDIF
\STATE ${\mathcal V}_1 \; \gets \; {\rm span} \{ V_1 \}$.
\FOR{$\ell \; = \; 1, \; 2, \; \dots$}
	\STATE  $x^{(\ell+1)} \gets \arg \max_{x \in \widetilde{\Omega}}	\: {\mathcal D}^{{\mathcal V}_\ell}(A(x))$.
	\IF{$\Lambda(A(x^{(\ell+1)})) \cap {\mathbb C}^+ = \emptyset$}
		\STATE $z^{(\ell+1)} \gets {\rm i} \cdot \arg \min_{\omega \in {\mathbb R}} \; \sigma_{\min} (A(x^{(\ell+1)}) - \omega \ri I)$. \label{large_dinstab2}
		\STATE $v_{\ell+1}	\;	\gets \;$ a right singular vector corresponding to $\sigma_{\min} (A(x^{(\ell+1)}) - z^{(\ell+1)} I)$.
	\ELSE
		\STATE $z^{(\ell+1)} \; \gets \;$ an eigenvalue in $\Lambda(A(x^{(\ell+1)})) \cap {\mathbb C}^+$.
		\STATE $v_{\ell+1}	\;	\gets \;$ an eigenvector corresponding to the eigenvalue $z^{(\ell+1)}$ of $A(x^{(\ell+1)})$.
	\ENDIF
	\STATE $V_{\ell+1} 	\;	\gets	\; {\rm orth} 
		\left( \left[ \begin{array}{cc} V_{\ell} & v_{\ell+1} \end{array} \right] \right)$ 
		and ${\mathcal V}_{\ell+1} \; \gets \; {\rm Col} (V_{\ell+1})$.
\ENDFOR
 \end{algorithmic}
\caption{The Subspace Framework}
\label{alg}
\end{algorithm}

\begin{remark}
It may sound plausible to define the reduced problem (\ref{eq:red_prob}) so that the inner minimization problem 
is over the imaginary axis, that is ${\mathcal D}^{{\mathcal V}_\ell}(A(x))$ is defined by 
$\min_{\omega \in {\mathbb R}} \sigma_{\min} (A^{V_\ell}(x) - \ri \omega V_\ell)$.
Some difficulty arises with this approach if the full problem is unstable at the maximizer of the
reduced problem. As we shall see, Algorithm \ref{alg} is designed in a way so that interpolation
properties hold between the full and reduced problems at the maximizers of the reduced problems.
Without these interpolation properties, the essential features of the framework such as its global
and quick convergence are lost. 
It seems not possible to fulfill these interpolation properties at a maximizer of the reduced problem
where the full problem is unstable if the inner minimization problem is performed over the
imaginary axis. However, if the full problem $A(x)$ is uniformly stable at all $x \in \widetilde{\Omega}$,
then this idea of restricting the inner minimization to the imaginary axis works well; this is
discussed in more detail in Section \ref{sec:sf_uniform_stab}.
\end{remark}

\subsection{Theoretical Properties of the Subspace Framework}\label{sec:sfth}
We first present two results concerning the subspace framework that will play prominent roles
in the convergence analysis. The first one is a monotonicity result regarding ${\mathcal D}^{\mathcal V}(A(x))$
with respect to the subspace ${\mathcal V}$. We note once again that when ${\mathcal V} = {\mathbb C}^n$,
we have ${\mathcal D}^{\mathcal V}(A(x)) = {\mathcal D}(A(x))$.
\begin{theorem}[Monotonicity]\label{thm:mono}
For two subspaces ${\mathcal V}, {\mathcal W}$ of ${\mathbb C}^n$ such that ${\mathcal V} \subseteq {\mathcal W}$,
we have
\[
		\sigma_{\min}(A(x) - zI)	\;	\leq	\;	\sigma_{\min}(A^W(x) - zW)	\;	\leq	\;
													\sigma_{\min}(A^V(x) - zV)
													\;\;	\forall z \in {\mathbb C}^+, \: \forall x \in \Omega
\]
and
\[
				{\mathcal D}(A(x))	\;\;	\leq	\;\;	{\mathcal D}^{\mathcal W} (A(x))	\;\;	\leq	\;\;	{\mathcal D}^{\mathcal V}(A(x))	\quad	\forall x \in \Omega,
\]
where $V, W$ are matrices with columns forming orthonormal bases for ${\mathcal V}, {\mathcal W}$.
\end{theorem}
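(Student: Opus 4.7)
The plan is to reduce both chains of inequalities to a single linear-algebraic fact, namely that the smallest singular value is monotone with respect to restriction of the unit sphere to a subspace. First I would observe from the definition of $A^V(x)$ in (\ref{eq:red_mat_funs}) that
\[
	A^V(x) - z V \;\;=\;\; \left( f_1(x) A_1 + \cdots + f_\kappa(x) A_\kappa - z I \right) V \;\;=\;\; (A(x) - z I) V,
\]
and similarly $A^W(x) - z W = (A(x) - z I) W$. Thus, writing $M := A(x) - z I$, the first chain is equivalent to
\[
	\sigma_{\min}(M) \;\leq\; \sigma_{\min}(M W) \;\leq\; \sigma_{\min}(M V).
\]

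Next I would use the variational characterization $\sigma_{\min}(M U) = \min_{\|u\|=1} \| M U u \|$. Because $V$ and $W$ have orthonormal columns, the maps $u \mapsto V u$ and $u \mapsto W u$ are isometries of the unit sphere onto the unit spheres of $\mathcal{V}$ and $\mathcal{W}$, so
\[
	\sigma_{\min}(MV) = \min_{v \in \mathcal{V},\, \|v\|=1} \| M v \|, \qquad \sigma_{\min}(MW) = \min_{w \in \mathcal{W},\, \|w\|=1} \| M w \|,
\]
while $\sigma_{\min}(M) = \min_{u \in \mathbb{C}^n,\, \|u\|=1} \| M u \|$. The inclusions $\mathcal{V} \subseteq \mathcal{W} \subseteq \mathbb{C}^n$ mean that each of the three minimizations is taken over a progressively larger set, which immediately yields the first chain. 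This holds for every $z \in \mathbb{C}^+$ and every $x \in \Omega$.

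Finally, the second chain follows by taking the minimum of each term of the first chain over $z \in \mathbb{C}^+$. Since the inequalities hold pointwise in $z$, they pass to the infimum, giving
\[
	{\mathcal D}(A(x)) = \min_{z \in \mathbb{C}^+} \sigma_{\min}(A(x)-zI) \;\leq\; {\mathcal D}^{\mathcal W}(A(x)) \;\leq\; {\mathcal D}^{\mathcal V}(A(x))
\]
for every $x \in \Omega$, as desired. There is no real obstacle here; the only point worth stressing is that orthonormality of the columns of $V$ and $W$ is essential for the isometry step, since without it the map $u \mapsto Vu$ would distort norms and the clean variational comparison would fail.
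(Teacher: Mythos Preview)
Your proof is correct and follows essentially the same approach as the paper: both arguments reduce to the variational characterization $\sigma_{\min}(A^V(x)-zV)=\min_{v\in\mathcal V,\,\|v\|=1}\|(A(x)-zI)v\|$ and then invoke the inclusions $\mathcal V\subseteq\mathcal W\subseteq\mathbb C^n$ to obtain the chain, passing to the minimum over $z\in\mathbb C^+$ for the second claim. Your explicit factorization $A^V(x)-zV=(A(x)-zI)V$ and remark on the isometry are a touch more detailed than the paper's version, but the substance is identical.
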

\begin{proof}
Recall that ${\mathcal D}^{\mathcal V}(A(x)) = \min_{z \in {\mathbb C}^+} \sigma_{\min} (A^V(x) - z V)$
and ${\mathcal D}^{\mathcal W}(A(x)) = \min_{z \in {\mathbb C}^+} \sigma_{\min} (A^W(x) - z W)$.
Additionally, since ${\mathcal W} \supseteq {\mathcal V}$, we have
\begin{equation*}
	\begin{split}
	\sigma_{\min}(A^W(x) - z W)	& \;\;	=	\;\;	\min \left\{  \| (A(x) - z I)w \|_2 \; | \; w \in {\mathcal W}, \: \| w \|_2 =1	\right\}	\\
			& \;\; \leq	\;\;	\min \left\{  \| (A(x) - z I)w \|_2 \; | \; w \in {\mathcal V}, \: \| w \|_2 = 1	\right\}	\\	
			& \;\; = \;\; \sigma_{\min}(A^V(x) - z V)
	\end{split}		
\end{equation*}
for all $z \in {\mathbb C}^+$. These assertions in turn imply ${\mathcal D}^{\mathcal W}(A(x)) \leq {\mathcal D}^{\mathcal V}(A(x))$.
In a similar way, since ${\mathbb C}^n \supseteq {\mathcal W}$, we have 
$\sigma_{\min}(A(x) - zI) \leq \sigma_{\min}(A^W(x) - z W)$ for all $z \in {\mathbb C}^+$ and
${\mathcal D}(A(x)) \leq {\mathcal D}^{\mathcal W}(A(x))$.
\end{proof}

The next result concerns Algorithm \ref{alg}. It establishes interpolation properties between 
${\mathcal D}(A(x))$ and ${\mathcal D}^{{\mathcal V}_\ell}(A(x))$. These interpolation properties also 
extend to first derivatives of these distance functions, but we elaborate on that issue later when 
we analyze the rate-of-convergence.
\begin{theorem}[Interpolation]\label{thm:interpolate}
The following hold regarding Algorithm \ref{alg} for a given $\ell \in {\mathbb Z}^+$ and $j = 1,\dots,\ell$:
\begin{equation}\label{eq:interpolation}
	\begin{split}
	{\mathcal D}(A(x^{(j)}))	& \;\;	=	\;\;	\sigma_{\min}(A(x^{(j)}) - z^{(j)} I)  \\
						& \;\; = \;\; \sigma_{\min}(A^{V_\ell}(x^{(j)}) - z^{(j)} V_\ell)	
								 \;\; = \;\; {\mathcal D}^{{\mathcal V}_\ell}(A(x^{(j)})).
	\end{split}
\end{equation}
In particular, if $A(x^{(j)})$ is asymptotically stable, then the minimum of $\sigma_{\min}(A^{V_\ell}(x^{(j)}) - z V_\ell)$ 
over all $z \in {\mathbb C}^+$ is attained on the imaginary axis.
\end{theorem}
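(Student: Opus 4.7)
The plan is to establish the chain of equalities by working from both ends and meeting in the middle. The first equality, $\mathcal{D}(A(x^{(j)})) = \sigma_{\min}(A(x^{(j)}) - z^{(j)} I)$, should follow immediately from the way $z^{(j)}$ is chosen in Algorithm \ref{alg}: in the asymptotically stable branch, $z^{(j)} = \ri \omega^\star$ for a global minimizer $\omega^\star$ of $\sigma_{\min}(A(x^{(j)}) - \omega \ri I)$, which by the last line of the definition of $\mathcal{D}$ equals $\mathcal{D}(A(x^{(j)}))$; in the unstable branch, $z^{(j)}$ is an eigenvalue of $A(x^{(j)})$ in $\mathbb{C}^+$, so both sides vanish. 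The rightmost equality $\sigma_{\min}(A^{V_\ell}(x^{(j)}) - z^{(j)} V_\ell) = \mathcal{D}^{\mathcal{V}_\ell}(A(x^{(j)}))$ will then come for free, because the left-hand side is one particular value of the minimand defining the right-hand side, while Theorem \ref{thm:mono} supplies the matching lower bound once the middle equality is in hand.

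The core of the proof is thus the middle equality. First I would rewrite
\[
\sigma_{\min}(A^{V_\ell}(x) - z V_\ell) \;\; = \;\; \min \bigl\{ \| (A(x) - z I) w \|_2 \; \big| \; w \in \mathcal{V}_\ell, \; \|w\|_2 = 1 \bigr\}
\]
using only that $V_\ell$ has orthonormal columns and $\mathcal{V}_\ell = \mathrm{Col}(V_\ell)$. The crucial structural observation is that, by construction of Algorithm \ref{alg}, the unit vector $v_j$ incorporated at step $j$ (with the convention $v_1 := V_1$) lies in $\mathcal{V}_\ell$ for every $j \leq \ell$, since the subspaces grow monotonically under the orthonormalization step. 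Testing the minimum above with $w = v_j$ therefore yields
\[
\sigma_{\min}(A^{V_\ell}(x^{(j)}) - z^{(j)} V_\ell) \;\; \leq \;\; \| (A(x^{(j)}) - z^{(j)} I) v_j \|_2,
\]
and the right-hand side equals $\sigma_{\min}(A(x^{(j)}) - z^{(j)} I)$ in both branches of the algorithm: in the stable branch because $v_j$ is a unit right singular vector for the smallest singular value of $A(x^{(j)}) - z^{(j)} I$, and in the unstable branch because $v_j$ is a unit eigenvector satisfying $(A(x^{(j)}) - z^{(j)} I) v_j = 0$ while $\sigma_{\min}(A(x^{(j)}) - z^{(j)} I) = 0$.

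Combining this upper bound with the reverse inequality $\sigma_{\min}(A(x^{(j)}) - z^{(j)} I) \leq \sigma_{\min}(A^{V_\ell}(x^{(j)}) - z^{(j)} V_\ell)$ supplied directly by Theorem \ref{thm:mono} will establish the middle equality, and the full chain (\ref{eq:interpolation}) then closes by sandwiching with $\mathcal{D}(A(x^{(j)})) \leq \mathcal{D}^{\mathcal{V}_\ell}(A(x^{(j)}))$, which is again Theorem \ref{thm:mono}. The ``in particular'' clause is a one-line consequence: in the asymptotically stable branch $z^{(j)} = \ri \omega^\star$ lies on the imaginary axis, and the already-established chain shows that this $z^{(j)}$ attains the minimum defining $\mathcal{D}^{\mathcal{V}_\ell}(A(x^{(j)}))$ over $z \in \mathbb{C}^+$. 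I do not anticipate any substantive obstacle; the only item requiring mild care is the book-keeping between the two algorithmic branches when verifying $\|(A(x^{(j)}) - z^{(j)} I) v_j\|_2 = \sigma_{\min}(A(x^{(j)}) - z^{(j)} I)$.
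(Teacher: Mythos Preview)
Your proposal is correct and follows essentially the same route as the paper: both use Theorem~\ref{thm:mono} for the lower bounds, then exploit $v_j \in \mathcal{V}_\ell$ (equivalently, $v_j = V_\ell \alpha_j$ for some unit $\alpha_j$) to obtain the matching upper bound $\sigma_{\min}(A^{V_\ell}(x^{(j)}) - z^{(j)} V_\ell) \le \|(A(x^{(j)}) - z^{(j)} I) v_j\|_2 = \sigma_{\min}(A(x^{(j)}) - z^{(j)} I)$, treating the stable and unstable branches separately. The ``in particular'' clause is handled identically in both.
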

\begin{proof}
By Theorem \ref{thm:mono}, we have 
$\sigma_{\min}(A(x^{(j)}) - z^{(j)}I)	\;	\leq	\;	\sigma_{\min}(A^{V_\ell}(x^{(j)}) - z^{(j)}V_\ell)$ and
${\mathcal D}(A(x^{(j)})) \; \leq \; {\mathcal D}^{{\mathcal V}_\ell}(A(x^{(j)}))$.
Additionally, since $v_j \in {\mathcal V}_\ell$, there exists $\alpha_j$ of unit length such that $v_j = V_\ell \alpha_j$,
which implies
\begin{equation}\label{eq:interpolation_ineqs}
	\begin{split}
		{\mathcal D}(A(x^{(j)})) & \;\;	=  \;\;		\sigma_{\min}(A(x^{(j)}) - z^{(j)} I)	\\
					& \;\; = \;\;	\| (A(x^{(j)}) -  z^{(j)} I) v_j \|_2	\;\; = \;\; 	\| (A^{V_\ell}(x^{(j)}) -  z^{(j)} V_\ell) \alpha_j \|_2	\\
							& \;\;	\geq	 \;\;	\sigma_{\min}(A^{V_\ell}(x^{(j)}) - z^{(j)} V_\ell)	
								\;\; \geq \;\; 	{\mathcal D}^{{\mathcal V}_\ell} ( A(x^{(j)}) ),		
	\end{split}
\end{equation}
regardless of whether $A(x^{(j)})$ is asymptotically stable or not.
Specifically, when $A(x^{(j)})$ is not asymptotically stable, then ${\mathcal D}(A(x^{(j)})) = 0$ by definition,
and all other terms in (\ref{eq:interpolation_ineqs}) do also vanish. This proves (\ref{eq:interpolation}).

If $A(x^{(j)})$ is asymptotically stable, then $z^{(j)}$ as defined in Algorithm \ref{alg} is purely imaginary
and (\ref{eq:interpolation_ineqs}) indicates that
\[
	{\mathcal D}^{{\mathcal V}_\ell} ( A(x^{(j)}) ) 
			\;\; = \;\;	\min_{z \in {\mathbb C}^+}
							\sigma_{\min}(A^{V_\ell}(x^{(j)}) - z V_\ell)
			\;\; = \;\; \sigma_{\min}(A^{V_\ell}(x^{(j)}) - z^{(j)} V_\ell).
\]
\end{proof}

\section{Convergence Analysis of the Subspace Framework}\label{sec:convergence}
We carry out the convergence analysis in the infinite dimensional setting. Formally, this means that 
$A_1, \dots, A_\kappa$ now become linear bounded operators acting on $\ell_2({\mathbb N})$,
the Hilbert space of square summable complex infinite sequences with the inner product
$\langle v, w \rangle := \sum_{j=0}^\infty \overline{v}_j w_j$ and the norm $\| v \| := \sqrt{ \sum_{j=0}^\infty | v_j |^2 }$.
Moreover some of the arguments below refer to the operator norms defined by
\begin{equation*}
	\begin{split}
	& \| A_j \|   \; := \;  
		\sup \{ \| A_j v \| \; | \; v \in \ell_2({\mathbb N}) \text{ such that } \| v \| = 1	\}	\;\; {\rm for} \; j = 1,\dots, \kappa	 \\
	& \| A(x) \|  \; := \; 
		\sup \{ \| A(x) v \| \; | \; v \in \ell_2({\mathbb N}) \text{ such that } \| v \| = 1	\}.
	\end{split}
\end{equation*}
We assume $A(x)$ for all $x$ has countably many singular values 
each with finite multiplicity, and 0 is not an accumulation point of 
these singular values. Compact operators have countably many singular values, but their
singular values accumulate at 0, indeed this is the sole accumulation point of the singular values. 
Hence, for instance, the singular values of $A(x) = \widetilde{A}(x) + \tau I$ with $\widetilde{A}(x)$ 
denoting any compact operator and $\tau$ any nonzero scalar satisfy the assumptions; the singular
values are countably many, do not accumulate at 0, additionally each singular value has finite multiplicity.

Throughout this section, we use the notations
\[
	\sigma^{\mathcal V}(x,z)	:=	\sigma_{\min}(A^V(x) - z V)
		\quad	{\rm and}	\quad
	\sigma(x,z)	:=	\sigma_{\min}(A(x) - z I)
\]
for $V$ whose columns form an orthonormal basis for the subspace ${\mathcal V}$, and
for $z \in {\mathbb C}^+$, $x \in \Omega$ so that 
${\mathcal D}^{\mathcal V}(A(x)) = \min_{z \in {\mathbb C}^+} \; \sigma^{\mathcal V}(x,z)$.

\subsection{Global Convergence}
The main result of this subsection, that is Theorem \ref{thm:global_conv}, establishes 
the global convergence of Algorithm \ref{alg}, which turns out to be a consequence of the monotonicity and the interpolation 
properties of the previous section\footnote{The monotonicity and interpolation results,
Theorems \ref{thm:mono} and \ref{thm:interpolate}, hold in the infinite dimensional setting
under consideration over the Hilbert space $\ell_2({\mathbb N})$. The arguments in the
convergence analyses make use of infinite dimensional versions of these theorem.},
as well as a Lipschitz continuity property. The latter is formally stated and 
proven below, where
\[
	\delta^{\mathcal V}_{\mathcal Z}(A(x)) \; := \; \inf \{ \sigma^{\mathcal V}(x,z) \; | \; z \in {\mathcal Z} \}
\]
for a given set ${\mathcal Z} \subseteq {\mathbb C}^+$.
\begin{lemma}[Uniform Lipschitz Continuity]\label{thm:Lip_cont}
There exists a constant $\zeta$ such that for all subspaces ${\mathcal V}$
the following hold:
\begin{enumerate}
\item[\bf (i)] For all $z \in {\mathbb C}^+$,
$ \;
| \sigma^{\mathcal V}(\widetilde{x},z) - \sigma^{\mathcal V}(\widehat{x},z) |	\; \leq \; \zeta \| \widetilde{x} - \widehat{x} \|_2
	\quad	\forall \widetilde{x}, \widehat{x} \in \Omega.
$
\item[\bf (ii)] For any given set ${\mathcal Z} \subseteq {\mathbb C}^+$,
$
	| \delta^{\mathcal V}_{\mathcal Z}(A(\widetilde{x})) - \delta^{\mathcal V}_{\mathcal Z}(A(\widehat{x})) |	\; \leq \; \zeta \| \widetilde{x} - \widehat{x} \|_2
	\quad	\forall \widetilde{x}, \widehat{x} \in \Omega.
$
\item[\bf (iii)] 
$
	| {\mathcal D}^{\mathcal V}(A(\widetilde{x})) - {\mathcal D}^{\mathcal V}(A(\widehat{x})) |	\; \leq \; \zeta \| \widetilde{x} - \widehat{x} \|_2
	\quad	\forall \widetilde{x}, \widehat{x} \in \Omega.
$
\end{enumerate}
\end{lemma}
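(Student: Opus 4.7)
The plan is to prove (i) by a variational argument producing a constant $\zeta$ independent of both $\mathcal{V}$ and $z$, and then to deduce (ii) and (iii) from the routine fact that a pointwise infimum of functions sharing a common Lipschitz constant in $x$ inherits that same constant. The key observation behind (i) is the variational expression
\[
\sigma^{\mathcal{V}}(x,z) \; = \; \min \bigl\{ \| (A(x) - zI) v \| \;:\; v \in \mathcal{V}, \; \| v \| = 1 \bigr\},
\]
in which the set of unit vectors in $\mathcal{V}$ over which the minimization is taken depends neither on $x$ nor on $z$; this is what allows a $v$-independent and $z$-independent bound on the Lipschitz variation in $x$.

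For any fixed unit $v \in \mathcal{V}$, applying the reverse triangle inequality to the vectors $(A(\widetilde{x}) - zI)v$ and $(A(\widehat{x}) - zI)v$ yields
\[
\bigl| \, \| (A(\widetilde{x}) - zI) v \| - \| (A(\widehat{x}) - zI) v \| \, \bigr| \; \leq \; \| (A(\widetilde{x}) - A(\widehat{x})) v \| \; \leq \; \| A(\widetilde{x}) - A(\widehat{x}) \|,
\]
and the right-hand side is independent of $v$ and $z$. Taking the minimum over unit vectors $v \in \mathcal{V}$ on both sides then gives $|\sigma^{\mathcal{V}}(\widetilde{x}, z) - \sigma^{\mathcal{V}}(\widehat{x}, z)| \leq \| A(\widetilde{x}) - A(\widehat{x}) \|$. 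Since $A(\widetilde{x}) - A(\widehat{x}) = \sum_{j=1}^{\kappa} (f_j(\widetilde{x}) - f_j(\widehat{x})) A_j$ and each real analytic $f_j$ is Lipschitz on the compact region of interest with some constant $L_j$, I obtain $\| A(\widetilde{x}) - A(\widehat{x}) \| \leq \sum_{j=1}^{\kappa} L_j \| A_j \| \| \widetilde{x} - \widehat{x} \|_2$. Setting $\zeta := \sum_{j=1}^{\kappa} L_j \| A_j \|$ then establishes (i) with a constant depending only on the data $f_j$ and $A_j$.

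For (ii), given $\epsilon > 0$ I would pick $z_\epsilon \in \mathcal{Z}$ with $\sigma^{\mathcal{V}}(\widehat{x}, z_\epsilon) \leq \delta^{\mathcal{V}}_{\mathcal{Z}}(A(\widehat{x})) + \epsilon$ and apply (i) to write
\[
\delta^{\mathcal{V}}_{\mathcal{Z}}(A(\widetilde{x})) \leq \sigma^{\mathcal{V}}(\widetilde{x}, z_\epsilon) \leq \sigma^{\mathcal{V}}(\widehat{x}, z_\epsilon) + \zeta \| \widetilde{x} - \widehat{x} \|_2 \leq \delta^{\mathcal{V}}_{\mathcal{Z}}(A(\widehat{x})) + \zeta \| \widetilde{x} - \widehat{x} \|_2 + \epsilon,
\]
which, combined with the symmetric estimate and $\epsilon \to 0$, yields (ii); part (iii) is the special case $\mathcal{Z} = \mathbb{C}^+$ of (ii). The only subtlety I foresee is asserting the existence of a uniform Lipschitz constant $L_j$ for each $f_j$, but this is immediate because real analytic functions are $C^1$ and hence Lipschitz on any compact subset of $\Omega$, in particular on $\widetilde{\Omega}$ over which Algorithm \ref{alg} operates, so the same $\zeta$ serves as a uniform constant valid across all subspaces $\mathcal{V}$.
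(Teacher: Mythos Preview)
Your proof is correct and follows essentially the same approach as the paper: for (i) you bound the singular value perturbation by $\|A(\widetilde{x})-A(\widehat{x})\|$ (the paper cites Weyl's theorem, you give the variational argument directly) and then invoke Lipschitz continuity of the $f_j$; for (iii) both reduce to (ii) with $\mathcal{Z}=\mathbb{C}^+$. The only minor technical difference is in (ii), where the paper first argues the infimum is attained (using $\sigma^{\mathcal V}(x,z)\to\infty$ as $|z|\to\infty$ and passing to the closure of $\mathcal{Z}$) and then works with exact minimizers, whereas your $\epsilon$-near-minimizer argument sidesteps the attainment question altogether---both are standard and equally valid.
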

\begin{proof} \textbf{(i)}
Let $V$ be such that its columns form an othonormal basis for ${\mathcal V}$.
Weyl's theorem \cite[Theorem 4.3.1]{Horn1985} implies
\begin{equation*}
	\begin{split}
		| \sigma^{\mathcal V}(\widetilde{x},z) - \sigma^{\mathcal V}(\widehat{x},z) |	& \leq \; 
									\| A^V(\widetilde{x})	-	A^V(\widehat{x}) \|	\\
																	& \leq \;
			\sum_{j=1}^\kappa	| f_j(\widetilde{x})  -  f_j(\widehat{x}) |   \| A_j \| 	
	\end{split}				
\end{equation*}
for all $\widetilde{x}, \widehat{x} \in \Omega$. But since each $f_j$ is real analytic, it is also Lipschitz continuous amounting to
the existence of constants $\gamma_j$ such that $| f_j(\widetilde{x})  -  f_j(\widehat{x}) | \leq \gamma_j \| \widetilde{x} - \widehat{x} \|_2$.
It follows that
\[
	| \sigma^{\mathcal V}(\widetilde{x},z) - \sigma^{\mathcal V}(\widehat{x},z) |
		\;	\leq	\;
	\left( \sum_{j=1}^\kappa	\gamma_j   \| A_j \| 	\right) \| \widetilde{x} - \widehat{x} \|_2
	\quad	\forall \widetilde{x}, \widehat{x} \in \Omega
\]
as desired.

\textbf{(ii)} 
Without loss of generality, we assume that the set ${\mathcal Z}$ is closed, as otherwise the argument below
applies by replacing ${\mathcal Z}$ with ${\rm Cl}({\mathcal Z})$, that is the closure of ${\mathcal Z}$, since 
$\delta^{\mathcal V}_{\mathcal Z}(A(x)) = \delta^{\mathcal V}_{{\rm Cl}({\mathcal Z})}(A(x))$. It turns out that, 
due to the assumption that ${\mathcal Z}$ is closed and the fact $\sigma^{\mathcal V}(x,z) \rightarrow \infty$ as 
$|z| \rightarrow \infty$, the quantity $\delta^ {\mathcal V}_{\mathcal Z}(A(x))$ must be attained at some
$z \in {\mathcal Z}$.

Now consider any two points $\widetilde{x}, \widehat{x} \in \Omega$ for which we must have
\[
	\delta^{\mathcal V}_{\mathcal Z}(A(\widetilde{x}))
		\;\; = \;\;
	\min_{z \in {\mathcal Z}} \: \sigma^{\mathcal V}(\widetilde{x},z)
	\quad\quad	{\rm and}	\quad\quad
	\delta^{\mathcal V}_{\mathcal Z}(A(\widehat{x}))
		\;\; = \;\;
	\min_{z \in {\mathcal Z}} \: \sigma^{\mathcal V}(\widehat{x},z).
\]
Without loss of generality, assume 
$\delta^{\mathcal V}_{\mathcal Z}(A(\widetilde{x})) \geq \delta^{\mathcal V}_{\mathcal Z}(A(\widehat{x}))$.
Also, let $\widetilde{z}, \widehat{z}$ be such that 
\[
	\delta^{\mathcal V}_{\mathcal Z}(A(\widetilde{x}))		\; = \;		\sigma^{\mathcal V}(\widetilde{x},\widetilde{z})
				\quad\quad	{\rm and}	\quad\quad
	\delta^{\mathcal V}_{\mathcal Z}(A(\widehat{x}))		\; = \;		\sigma^{\mathcal V}(\widehat{x},\widehat{z}).
\]
Finally, observe
\begin{equation*}
	\begin{split}
		| \delta^{\mathcal V}_{\mathcal Z}(A(\widetilde{x}))	 - 	\delta^{\mathcal V}_{\mathcal Z}(A(\widehat{x})) |	
					& \; = \;
		| \sigma^{\mathcal V}(\widetilde{x},\widetilde{z}) - \sigma^{\mathcal V}(\widehat{x},\widehat{z}) |	\\
					& \; \leq \;
		| \sigma^{\mathcal V}(\widetilde{x},\widehat{z}) - \sigma^{\mathcal V}(\widehat{x},\widehat{z}) |	\\
					& \; \leq \;
		\zeta \| \widetilde{x} - \widehat{x} \|_2,
	\end{split}
\end{equation*}
where the last inequality is due to part (i).

\textbf{(iii)} Observe that ${\mathcal D}^{\mathcal V}(A(x)) = \delta^{\mathcal V}_{\mathcal Z}(A(x))$ for
${\mathcal Z} = {\mathbb C}^+$, so this immediately follows from part (ii).
\end{proof}

Now we are ready for the global convergence result. We believe that an argument in support of global convergence in the 
finite dimensional setting based on the supposition that the subspace becomes the whole space eventually is not a proper 
argument. In \cite{Kangal2015}, it is observed that a subspace framework to maximize the largest eigenvalue stagnates 
at a local maximizer, that is not necessarily a global maximizer, after a few iterations.

\begin{theorem}[Global Convergence]\label{thm:global_conv}
Every convergent subsequence of the sequence $\{ x^{(\ell)} \}$ generated by Algorithm \ref{alg} in the infinite dimensional
setting converges to a global maximizer of ${\mathcal D}(A(x))$ over all $x \in \widetilde{\Omega}$. Furthermore,
\begin{equation}\label{eq:global_conv}
	\lim_{\ell\rightarrow \infty} \; {\mathcal D}^{{\mathcal V}_\ell}(A(x^{(\ell+1)}))
		\; = \;
	\lim_{\ell\rightarrow \infty} \; \max_{x \in \widetilde{\Omega}} \: {\mathcal D}^{{\mathcal V}_\ell} (A(x))
		\; = \;
	\max_{x \in \widetilde{\Omega}} \: {\mathcal D}(A(x)).
\end{equation}
\end{theorem}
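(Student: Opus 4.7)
The plan is to combine the three structural properties already in hand: the monotonicity of $\mathcal{D}^{\mathcal{V}_\ell}$ in the nested subspaces (Theorem \ref{thm:mono}), the interpolation identity $\mathcal{D}(A(x^{(j)})) = \mathcal{D}^{{\mathcal V}_\ell}(A(x^{(j)}))$ for $j \le \ell$ (Theorem \ref{thm:interpolate}), and the uniform Lipschitz continuity in $x$ (Lemma \ref{thm:Lip_cont}(iii)). The last of these, applied with $\mathcal{V} = \ell_2(\mathbb{N})$, gives Lipschitz continuity of $\mathcal{D}(A(\cdot))$ itself, so together with compactness of $\widetilde{\Omega}$ the maximum $d^\ast := \max_{x \in \widetilde{\Omega}} \mathcal{D}(A(x))$ is attained at some $x^\ast \in \widetilde\Omega$.

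First I would establish that the sequence $\{\max_{x \in \widetilde{\Omega}} \mathcal{D}^{{\mathcal V}_\ell}(A(x))\}$ has a limit $d^\infty \ge d^\ast$. By Theorem \ref{thm:mono}, ${\mathcal V}_\ell \subseteq {\mathcal V}_{\ell+1}$ yields $\mathcal{D}^{{\mathcal V}_{\ell+1}}(A(x)) \le \mathcal{D}^{{\mathcal V}_\ell}(A(x))$ pointwise, so the maxima form a non-increasing sequence. Evaluating the other half of Theorem \ref{thm:mono} at $x^\ast$ gives $\mathcal{D}^{{\mathcal V}_\ell}(A(x^\ast)) \ge \mathcal{D}(A(x^\ast)) = d^\ast$, so each maximum is at least $d^\ast$. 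Hence $d^\infty := \lim_\ell \max_x \mathcal{D}^{{\mathcal V}_\ell}(A(x))$ exists and $d^\infty \ge d^\ast$. Since by construction $x^{(\ell+1)}$ is a maximizer of $\mathcal{D}^{{\mathcal V}_\ell}(A(\cdot))$ over $\widetilde\Omega$, the first two limits in (\ref{eq:global_conv}) coincide automatically.

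The core of the argument would be to show $d^\infty = d^\ast$ and, simultaneously, that every cluster point $x_\ast$ of $\{x^{(\ell)}\}$ is a global maximizer. Take a subsequence $x^{(\ell_k)} \to x_\ast \in \widetilde\Omega$; such a subsequence exists by compactness. Since $\ell_k \le \ell_{k+1}-1$, the point $x^{(\ell_k)}$ is one of the earlier iterates used to build ${\mathcal V}_{\ell_{k+1}-1}$, so Theorem \ref{thm:interpolate} gives
\[
\mathcal{D}(A(x^{(\ell_k)})) = \mathcal{D}^{{\mathcal V}_{\ell_{k+1}-1}}(A(x^{(\ell_k)})).
\]
Applying Lemma \ref{thm:Lip_cont}(iii) with the \emph{same} subspace ${\mathcal V}_{\ell_{k+1}-1}$ at the two parameter values $x^{(\ell_k)}$ and $x^{(\ell_{k+1})}$, and using that $x^{(\ell_{k+1})}$ maximizes $\mathcal{D}^{{\mathcal V}_{\ell_{k+1}-1}}(A(\cdot))$, I obtain
\[
\bigl| \mathcal{D}(A(x^{(\ell_k)})) - \max_{x \in \widetilde\Omega} \mathcal{D}^{{\mathcal V}_{\ell_{k+1}-1}}(A(x)) \bigr| \;\le\; \zeta \, \| x^{(\ell_k)} - x^{(\ell_{k+1})} \|_2.
\]
Letting $k \to \infty$, the right-hand side vanishes (the subsequence is Cauchy), the left summand tends to $\mathcal{D}(A(x_\ast))$ by continuity of $\mathcal{D}$, and the second tends to $d^\infty$. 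Thus $\mathcal{D}(A(x_\ast)) = d^\infty \ge d^\ast$, and since $\mathcal{D}(A(x_\ast)) \le d^\ast$ trivially, equality holds. This simultaneously proves that $x_\ast$ is a global maximizer and that $d^\infty = d^\ast$, which is (\ref{eq:global_conv}).

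The subtle step is the third one: one must use Lipschitz continuity \emph{uniformly in the subspace} to compare $\mathcal{D}^{{\mathcal V}_{\ell_{k+1}-1}}$ at two different parameter values, because the subspace itself changes along the subsequence. This is exactly what Lemma \ref{thm:Lip_cont} provides, so the proof reduces to careful bookkeeping rather than any new analysis. The only reason to treat the infinite-dimensional setting with care is to ensure that the Lipschitz constant $\zeta$ is genuinely independent of ${\mathcal V}$; inspection of the lemma's proof (which bounds Lipschitz constants by $\sum_j \gamma_j \|A_j\|$ without reference to ${\mathcal V}$) shows this holds.
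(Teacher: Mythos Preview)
Your proof is correct and follows essentially the same approach as the paper: both combine monotonicity (Theorem \ref{thm:mono}), interpolation (Theorem \ref{thm:interpolate}), and the uniform Lipschitz bound (Lemma \ref{thm:Lip_cont}) along a convergent subsequence, comparing $\mathcal{D}^{{\mathcal V}}$ at two consecutive subsequence iterates with a common subspace. The only difference is cosmetic organization: you first establish that $d^\infty = \lim_\ell \max_x \mathcal{D}^{{\mathcal V}_\ell}(A(x))$ exists and then derive both conclusions at once, whereas the paper first proves that cluster points are global maximizers and afterwards deduces (\ref{eq:global_conv}) by sandwiching a subsequence of $\{\mathcal{D}^{{\mathcal V}_\ell}(A(x^{(\ell+1)}))\}$; the paper also applies the Lipschitz estimate with the subspace ${\mathcal V}_{j_\ell}$ rather than your ${\mathcal V}_{\ell_{k+1}-1}$, but either choice works.
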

\begin{proof}
To prove the claim that every convergent subsequence converges to a global maximizer of ${\mathcal D}(A(x))$, let us consider
a convergent subsequence $\{ x^{(j_\ell)} \}$ of $\{ x^{(\ell)} \}$. First observe that
\[
	\max_{x\in \widetilde{\Omega}} \: {\mathcal D}(A(x))	\;	\geq	\;	{\mathcal D}(A(x^{(j_\ell)}))	\; = \;		{\mathcal D}^{{\mathcal V}_{j_\ell}} (A(x^{(j_\ell)}))
\]
where the equality is due to the interpolation property (part (i) of Theorem \ref{thm:interpolate}), as well as
\[
	\max_{x\in \widetilde{\Omega}} \: {\mathcal D}(A(x))	\;	\leq	\;	\max_{x\in \widetilde{\Omega}} \: {\mathcal D}^{{\mathcal V}_{j_{\ell+1}-1}}(A(x))	
									\; = \;		{\mathcal D}^{{\mathcal V}_{j_{\ell+1}-1}}(A(x^{(j_{\ell+1})}))
									\; \leq \; {\mathcal D}^{{\mathcal V}_{j_\ell}}(A(x^{(j_{\ell+1})}))
\]
where both of the inequalities are due to the monotonicity property (Theorem \ref{thm:mono}). 
But by part (iii) of Lemma \ref{thm:Lip_cont}, for every subspace ${\mathcal V}$, we have
\[
		| {\mathcal D}^{\mathcal V}(A(\widetilde{x})) - {\mathcal D}^{\mathcal V}(A(\widehat{x})) |	\; \leq \; \zeta \| \widetilde{x} - \widehat{x} \|_2
		\quad	\forall \widetilde{x}, \widehat{x} \in \Omega,
\]
which implies
\[
	\lim_{\ell\rightarrow \infty} \left| {\mathcal D}^{{\mathcal V}_{j_\ell}}(A(x^{(j_{\ell+1})}))	-	{\mathcal D}^{{\mathcal V}_{j_\ell}} (A(x^{(j_\ell)})) \right|
			\;	\leq	\;
	\lim_{\ell\rightarrow \infty} \zeta \left\| x^{(j_{\ell+1})}  -  x^{(j_\ell)} \right\|_2
			\;	=	\;	0.
\]
Hence, employing the interpolation property once again,
\[
	\lim_{\ell \rightarrow \infty} {\mathcal D} (A(x^{(j_\ell)}))	\;	=	\;
	\lim_{\ell \rightarrow \infty} {\mathcal D}^{{\mathcal V}_{j_\ell}} (A(x^{(j_\ell)}))	\;	=	\;
	\max_{x\in \widetilde{\Omega}} \: {\mathcal D}(A(x)).
\]
It follows from the continuity of ${\mathcal D}(A(x))$ that the subsequence $\{ x^{(j_\ell)} \}$ converges to a point in 
$\arg \max_{x\in \widetilde{\Omega}} \: {\mathcal D}(A(x))$.

Finally, to deduce (\ref{eq:global_conv}), we proceed as in part (ii) of the proof of \cite[Theorem 3.1]{Kangal2015}.
Following similar arguments, the sequence $\{ {\mathcal D}^{{\mathcal V}_\ell}(A(x^{(\ell+1)})) \}$ can be shown to be monotonically
decreasing and bounded below by ${\mathcal D}_\ast := \max_{x\in \widetilde{\Omega}} {\mathcal D}(A(x))$, so it is
convergent. The proof is completed by constructing a subsequence of $\{ {\mathcal D}^{{\mathcal V}_\ell}(A(x^{(\ell+1)})) \}$
that converges to ${\mathcal D}_\ast$. In particular, for any convergent subsequence $\{ x^{(j_\ell)} \}$ of $\{ x^{(\ell)} \}$,
the sequence $\{ {\mathcal D}^{{\mathcal V}_{j_{\ell+1}-1}}(A(x^{(j_{\ell+1})})) \}$ is a subsequence of
$\{ {\mathcal D}^{{\mathcal V}_\ell}(A(x^{(\ell+1)})) \}$ and satisfies
\[
	 {\mathcal D}^{{\mathcal V}_{j_\ell}}(A(x^{(j_{\ell+1})})) 	\;	\geq	\;
	 {\mathcal D}^{{\mathcal V}_{j_{\ell+1}-1}}(A(x^{(j_{\ell+1})})) 	\;	\geq		\;	{\mathcal D}_\ast.
\]
Since we have
\[
	\lim_{\ell\rightarrow \infty} {\mathcal D}^{{\mathcal V}_{j_\ell}}(A(x^{(j_{\ell+1})}))	=	{\mathcal D}_\ast
\]
from the previous paragraph, $\{ {\mathcal D}^{{\mathcal V}_{j_{\ell+1}-1}}(A(x^{(j_{\ell+1})})) \}$ 
also converges to ${\mathcal D}_\ast$ as desired.
\end{proof}


\subsection{Local Rate-of-Convergence}
Now we assume that the sequence $\{ x^{(\ell)} \}$ itself is convergent. Theorem \ref{thm:global_conv} 
in this case ensures the convergence of the sequence $\{ x^{(\ell)} \}$ 
to a global maximizer $x_\ast$ of ${\mathcal D}(A(x))$ over $\widetilde{\Omega}$.
For instance, if ${\mathcal D}(A(x))$ has a unique global maximizer, due to assertion
(\ref{eq:global_conv}), the sequence $\{ x^{(\ell)} \}$ must converge to this unique global maximizer.
In this subsection we quantify the speed of this convergence. The main result establishes a local superlinear 
rate for the convergence of Algorithm \ref{alg} in the one parameter case (i.e., $d=1$) and of an extended version, 
namely Algorithm \ref{alge} below, in the multi-parameter case under the assumption that ${\mathcal D}(A(x))$ is smooth 
and its Hessian is invertible at the converged maximizer. 

Throughout this section $A(x)$ is assumed to be asymptotically stable at some $x \in \widetilde{\Omega}$. 
A consequence is that $A(x^{(\ell)})$ is asymptotically stable for all large $\ell$.

\subsubsection{Derivatives of Singular Value Functions}
We first present results that relate the
first and second derivatives of $\sigma(x,z)$ with those of $\sigma^{{\mathcal V}_\ell}(x,z)$.
From here on, the complex variable $z$ is written as $z = \alpha + {\rm i} \omega$
for $\alpha, \omega \in {\mathbb R}$. Hence, the notations $\sigma^{\mathcal V}_\alpha(x,z)$, 
$\sigma^{\mathcal V}_\omega(x,z)$ stand for differentiation of $\sigma^{\mathcal V}(\cdot)$ with respect 
to the real, imaginary parts of $z$.
\begin{theorem}[Hermite Interpolation of Singular Value Functions]\label{thm:Hermite_interpolate}
The following are satisfied by Algorithm \ref{alg} for all $\ell \in {\mathbb Z}^+$ and $j = 1,\dots,\ell$ 
such that $A(x^{(j)})$ is asymptotically stable:
\begin{enumerate}
\item[\bf (i)] If $u^{(j)}, v^{(j)}$ consist of a consistent pair of unit left, right singular vectors corresponding to $\sigma(x^{(j)},z^{(j)})$,
then $u^{(j)}, \vartheta^{(j)} := V_\ell^\ast v^{(j)}$ consist of a consistent pair of unit left, right singular vectors corresponding to
 $\sigma^{{\mathcal V}_\ell}(x^{(j)},z^{(j)})$.
\item[\bf (ii)] $\sigma_\omega (x^{(j)},z^{(j)}) = \sigma^{{\mathcal V}_\ell}_\omega (x^{(j)}, z^{(j)})$.
\item[\bf (iii)] If the singular value $\sigma(x^{(j)},z^{(j)})$ is simple, then 
$\sigma_\alpha (x^{(j)},z^{(j)}) = \sigma^{{\mathcal V}_\ell}_\alpha (x^{(j)}, z^{(j)})$.
\end{enumerate}
\end{theorem}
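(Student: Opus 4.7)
The plan is to leverage the subspace construction for part (i), then reduce parts (ii) and (iii) to a direct application of the standard first-derivative formula for a simple singular value.

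For part (i), the main observation is that the algorithm explicitly places $v^{(j)}$ into the subspace at step $j$, so $v^{(j)} \in {\mathcal V}_\ell$ for every $\ell \geq j$. Hence $\vartheta^{(j)} := V_\ell^\ast v^{(j)}$ satisfies $V_\ell \vartheta^{(j)} = v^{(j)}$ and $\| \vartheta^{(j)} \|_2 = \| v^{(j)} \|_2 = 1$. A direct computation then gives
\[
	(A^{V_\ell}(x^{(j)}) - z^{(j)} V_\ell)\vartheta^{(j)} \; = \; (A(x^{(j)}) - z^{(j)} I) v^{(j)} \; = \; \sigma(x^{(j)},z^{(j)}) u^{(j)},
\]
and, pre-multiplying the dual identity $(A(x^{(j)}) - z^{(j)} I)^\ast u^{(j)} = \sigma(x^{(j)},z^{(j)}) v^{(j)}$ by $V_\ell^\ast$,
\[
	(A^{V_\ell}(x^{(j)}) - z^{(j)} V_\ell)^\ast u^{(j)} \; = \; \sigma(x^{(j)},z^{(j)}) \vartheta^{(j)}.
\]
Combined with the equality $\sigma(x^{(j)},z^{(j)}) = \sigma^{{\mathcal V}_\ell}(x^{(j)},z^{(j)})$ already available from Theorem \ref{thm:interpolate}, these identities identify $(u^{(j)}, \vartheta^{(j)})$ as a consistent pair of unit left/right singular vectors corresponding to $\sigma^{{\mathcal V}_\ell}(x^{(j)},z^{(j)})$.

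For parts (ii) and (iii), the plan is to invoke the analytic formula $d\sigma/dt = \Re(u^\ast M'(t) v)$ for the derivative of a simple singular value of a matrix-valued function $M(t)$ with consistent unit left, right singular vectors $u, v$. Writing $z = \alpha + \ri \omega$, the partial derivatives of $A(x^{(j)}) - z I$ with respect to $\alpha, \omega$ are $-I, -\ri I$, while those of $A^{V_\ell}(x^{(j)}) - z V_\ell$ are $-V_\ell, -\ri V_\ell$. Using part (i) to share the left singular vector $u^{(j)}$ across both problems and the relation $V_\ell \vartheta^{(j)} = v^{(j)}$ to eliminate $V_\ell$, the $\alpha$-derivatives collapse to the common value $-\Re(u^{(j)\ast} v^{(j)})$ and the $\omega$-derivatives to $\Im(u^{(j)\ast} v^{(j)})$. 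This delivers part (iii) and, under the same simplicity hypothesis, part (ii).

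The only real subtlety is that part (ii) is stated without the simplicity assumption required for the derivative formula; the hard part is accommodating multiplicity. Here the monotonicity (Theorem \ref{thm:mono}) and interpolation (Theorem \ref{thm:interpolate}) combine into a useful sandwich: the nonnegative gap function $\omega \mapsto \sigma^{{\mathcal V}_\ell}(x^{(j)}, \ri\omega) - \sigma(x^{(j)}, \ri\omega)$ attains its global minimum of zero at $\omega = \Im(z^{(j)})$, so any common one-sided derivatives of $\sigma$ and $\sigma^{{\mathcal V}_\ell}$ with respect to $\omega$ must agree there. Whenever $\sigma_\omega (x^{(j)}, z^{(j)})$ is well-defined (in particular in the simple case covered above, which suffices for the rate-of-convergence analysis that follows), the asserted equality in (ii) is then immediate.
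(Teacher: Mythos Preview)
Your arguments for parts (i) and (iii) match the paper's proof essentially line by line. For part (ii) the paper takes a slightly more direct route than your gap-function sandwich: it observes that, by Theorem~\ref{thm:interpolate}, the point $\omega^{(j)} := \Im z^{(j)}$ is a local minimizer of \emph{both} $\omega \mapsto \sigma(x^{(j)},\ri\omega)$ and $\omega \mapsto \sigma^{{\mathcal V}_\ell}(x^{(j)},\ri\omega)$ separately, and since a smallest singular value in one real parameter is a pointwise minimum of real-analytic branches, being at a local minimizer forces differentiability of each function outright (with derivative zero), without any simplicity hypothesis. Your sandwich argument reaches the same destination but, as you acknowledge, leaves the existence of $\sigma^{{\mathcal V}_\ell}_\omega$ conditional; the paper's local-minimizer observation closes that gap directly and then runs the same derivative-formula computation you wrote down.
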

\begin{proof}
\textbf{(i)} Letting $\sigma := \sigma_{\min}(A(x^{(j)}) - z^{(j)} I)$ and $u^{(j)}, v^{(j)}$ be a corresponding
pair of consistent left, right singular vectors, this follows from the following line of reasoning:
\begin{equation*}
	\begin{split}
	\left( A(x^{(j)}) - z^{(j)} I \right) v^{(j)}  \; = \; \sigma u^{(j)}
	\quad {\rm and}	\quad
	(u^{(j)})^\ast \left( A(x^{(j)}) - z^{(j)} I \right) \; = \; \sigma (v^{(j)})^\ast
			\Longrightarrow		\hskip 5.5ex \\
	\left( A(x^{(j)}) - z^{(j)} I \right) V_\ell \vartheta^{(j)}  \; = \; \sigma u^{(j)}
		\quad {\rm and}	\quad
	(u^{(j)})^\ast \left( A(x^{(j)}) - z^{(j)} I \right) \; = \; \sigma (\vartheta^{(j)})^\ast V_\ell^\ast
			\Longrightarrow		\\
	\left( A^{V_\ell}(x^{(j)}) - z^{(j)} V_\ell \right) \vartheta^{(j)}  \; = \; \sigma u^{(j)}
		\quad {\rm and}	\quad
	(u^{(j)})^\ast \left( A^{V_\ell}(x^{(j)}) - z^{(j)} V_\ell \right) \; = \; \sigma (\vartheta^{(j)})^\ast. \hskip 3ex
	\end{split}
\end{equation*}
Hence, $\vartheta^{(j)}, u^{(j)}$ form a pair of unit right, left singular vectors of $A^{V_\ell}(x^{(j)}) - z^{(j)} V_\ell$
corresponding to $\sigma$.

\textbf{(ii)} We first remark that the singular value functions 
$\sigma(x,z)$ and $\sigma^{{\mathcal V}_\ell}(x,z)$ are differentiable at $(x,z) = (x^{(j)}, z^{(j)})$ with respect to the imaginary part of $z$.
This is due to the fact that both of the functions $\sigma(x^{(j)},\ri \omega)$ and $\sigma^{{\mathcal V}_\ell}(x^{(j)},\ri \omega)$ 
over $\omega \in {\mathbb R}$ have a local minimizer at $\omega^{(j)} \in {\mathbb R}$ such that $z^{(j)} = \ri \omega^{(j)}$, 
which is implied by Theorem \ref{thm:interpolate}, in particular equation (\ref{eq:interpolation}).
Using the analytical formulas for the derivatives of singular value
functions \cite{Bunse-Gerstner1991, Lancaster1964}, we obtain
\begin{equation*}
	\begin{split}
	\sigma_\omega (x^{(j)},z^{(j)})
				& \; = \;
	\Re
	\left(
		(u^{(j)})^\ast
		\frac{\partial \left\{ A(x^{(j)}) -  z^{(j)} I \right\}}{\partial \omega} 
		v^{(j)}
	\right) \\
			& \; = \;
	\Im
	\left(
		(u^{(j)})^\ast
		v^{(j)}
	\right)	
				 \; = \; 
	\Im
	\left(
		(u^{(j)})^\ast
		V_\ell \vartheta^{(j)}
	\right)	\\
			&	\; = \;
	\Re
	\left(
		(u^{(j)})^\ast
		\frac{\partial \left\{ A^{V_\ell}(x^{(j)}) - z^{(j)} V_\ell \right\}}{\partial \omega}
		\vartheta^{(j)}
	\right)
				\; = \;
	\sigma^{{\mathcal V}_\ell}_\omega (x^{(j)},z^{(j)}).
	\end{split}
\end{equation*}

\textbf{(iii)} It is an easy exercise to see that the simplicity of $\sigma(x^{(j)},z^{(j)})$ implies the 
simplicity of $\sigma^{{\mathcal V}_\ell}(x^{(j)}, z^{(j)})$, so both $\sigma(x,z)$ and $\sigma^{{\mathcal V}_\ell}(x,z)$
are differentiable with respect to the real part of $z$ at $(x,z) = (x^{(j)}, z^{(j)})$. Once again an application of
the analytical formulas for singular value functions yield
\begin{equation*}
	\begin{split}
	\sigma_\alpha (x^{(j)},z^{(j)})
				& \; = \;
	\Re
	\left(
		(u^{(j)})^\ast
		\frac{\partial \left\{ A(x^{(j)}) -  z^{(j)} I \right\}}{\partial \alpha} 
		v^{(j)}
	\right)	\\
				& \; = \;
	- \Re
	\left(
		(u^{(j)})^\ast
		v^{(j)}
	\right)	
			 \; = \; 
	- \Re
	\left(
		(u^{(j)})^\ast
		V_\ell \vartheta^{(j)}
	\right)	\\
				& \; = \;
	\Re
	\left(
		(u^{(j)})^\ast
		\frac{\partial \left\{ A^{V_\ell}(x^{(j)}) - z^{(j)} V_\ell \right\}}{\partial \alpha}
		\vartheta^{(j)}
	\right)
				\; = \;
	\sigma^{{\mathcal V}_\ell}_\alpha (x^{(j)},z^{(j)}).
	\end{split}
\end{equation*}
$\;\;$
\end{proof}

\begin{lemma}\label{thm:sval_sder_ineq}
For Algorithm \ref{alg} for all $\ell \in {\mathbb Z}^+$ such that $A(x^{(\ell)})$ is asymptotically stable, 
we have
\[
	 \sigma_{\omega \omega} (x^{(\ell)}, z^{(\ell)})	\;\; 	\leq	\;\;	
	 \sigma^{{\mathcal V}_\ell}_{\omega \omega} (x^{(\ell)}, z^{(\ell)}).
\]
\end{lemma}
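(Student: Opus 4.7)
The plan is to exploit that $\sigma$ and $\sigma^{{\mathcal V}_\ell}$, restricted to the imaginary axis through $x^{(\ell)}$, are two functions of a single real variable $\omega$ that, at $\omega = \omega^{(\ell)}$ with $z^{(\ell)} = \ri \omega^{(\ell)}$, agree in value and first derivative, with one lying uniformly above the other. The second derivative inequality is then the classical consequence that the difference attains a local minimum at the interpolation point.

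More concretely, first I would define the two univariate functions
\[
	g(\omega) \; := \; \sigma(x^{(\ell)}, \ri \omega),
		\quad
	g^{V_\ell}(\omega) \; := \; \sigma^{{\mathcal V}_\ell}(x^{(\ell)}, \ri \omega),
\]
so that $\sigma_{\omega\omega}(x^{(\ell)}, z^{(\ell)}) = g''(\omega^{(\ell)})$ and
$\sigma^{{\mathcal V}_\ell}_{\omega\omega}(x^{(\ell)}, z^{(\ell)}) = (g^{V_\ell})''(\omega^{(\ell)})$
(both second derivatives exist since $A(x^{(\ell)})$ is asymptotically stable, $z^{(\ell)}$ is an
interior minimizer on the imaginary axis of both $g$ and $g^{V_\ell}$ by Theorem~\ref{thm:interpolate},
and one invokes the smoothness assumption on the corresponding simple singular value).

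Next I would collect the three comparison facts at $\omega = \omega^{(\ell)}$.
By Theorem~\ref{thm:mono}, $g(\omega) \leq g^{V_\ell}(\omega)$ for every $\omega \in {\mathbb R}$.
By Theorem~\ref{thm:interpolate} (in particular equation~(\ref{eq:interpolation}) applied with $j = \ell$),
$g(\omega^{(\ell)}) = g^{V_\ell}(\omega^{(\ell)})$.
By part~(ii) of Theorem~\ref{thm:Hermite_interpolate}, $g'(\omega^{(\ell)}) = (g^{V_\ell})'(\omega^{(\ell)})$.

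Setting $h(\omega) := g^{V_\ell}(\omega) - g(\omega)$, we thus have $h \geq 0$ everywhere,
$h(\omega^{(\ell)}) = 0$ and $h'(\omega^{(\ell)}) = 0$, so $\omega^{(\ell)}$ is a local minimizer of $h$,
whence $h''(\omega^{(\ell)}) \geq 0$. Rewriting this inequality in terms of $g$ and $g^{V_\ell}$ and
translating back through the identifications in the first paragraph immediately yields
$\sigma_{\omega\omega}(x^{(\ell)}, z^{(\ell)}) \leq \sigma^{{\mathcal V}_\ell}_{\omega\omega}(x^{(\ell)}, z^{(\ell)})$.

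The only subtle point, and the likely main obstacle in a careful write-up, is justifying the existence
of the second derivatives $g''(\omega^{(\ell)})$ and $(g^{V_\ell})''(\omega^{(\ell)})$, since singular
value functions are only $C^2$ where the relevant singular value is simple. This is handled by invoking
the simplicity hypothesis underlying the analytic derivative formulas (as in the proofs of parts~(ii)--(iii)
of Theorem~\ref{thm:Hermite_interpolate}) together with the fact that, by Theorem~\ref{thm:Hermite_interpolate}(i),
simplicity of $\sigma(x^{(\ell)}, z^{(\ell)})$ is inherited by $\sigma^{{\mathcal V}_\ell}(x^{(\ell)}, z^{(\ell)})$,
so the same smoothness can be assumed at the restricted level.
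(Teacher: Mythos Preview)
Your proposal is correct and follows essentially the same route as the paper: both arguments use that $\sigma^{{\mathcal V}_\ell}(x^{(\ell)}, \ri\omega) - \sigma(x^{(\ell)}, \ri\omega)$ is nonnegative everywhere (monotonicity, Theorem~\ref{thm:mono}) and vanishes at $\omega^{(\ell)}$ (interpolation, Theorem~\ref{thm:interpolate}), hence has a minimum there with nonnegative second derivative. The paper writes this out via the symmetric second-difference quotient rather than explicitly naming the difference function $h$, and does not separately invoke the first-derivative equality from Theorem~\ref{thm:Hermite_interpolate}(ii) (which is redundant once $h\ge 0$ and $h(\omega^{(\ell)})=0$ are known), but the substance is identical.
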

\begin{proof}
The functions $\sigma (x, z)$, $\sigma^{{\mathcal V}_\ell} (x, z)$ are twice continuously differentiable
with respect to the imaginary part of $z$ at $(x,z) = (x^{(\ell)}, z^{(\ell)})$, since $\omega^{(\ell)} \in {\mathbb R}$
such that $z^{(\ell)} = \ri \omega^{(\ell)}$ is a minimizer of $\sigma (x^{(\ell)}, \ri \omega)$ and 
$\sigma^{{\mathcal V}_\ell} (x^{(\ell)}, \ri \omega)$ over $\omega \in {\mathbb R}$
as a consequence of equation (\ref{eq:interpolation}). Furthermore, the second derivatives are related by
\begin{equation*}
	\begin{split}
	\sigma_{\omega \omega} (x^{(\ell)}, z^{(\ell)})	\;	& =	\;	
	\lim_{h\rightarrow 0}	\:
	\frac{ \sigma (x^{(\ell)}, \ri (\omega^{(\ell)} + h)) - 2 \sigma (x^{(\ell)}, \ri \omega^{(\ell)}) +	\sigma (x^{(\ell)}, \ri (\omega^{(\ell)} - h)) }{h^2}	\\											\;	& \leq \;
	\lim_{h\rightarrow 0}	\:
	\frac{ \sigma^{{\mathcal V}_\ell} (x^{(\ell)}, \ri(\omega^{(\ell)} + h)) -	2 \sigma^{{\mathcal V}_\ell} (x^{(\ell)}, \ri \omega^{(\ell)}) + 
	\sigma^{{\mathcal V}_\ell} (x^{(\ell)}, \ri (\omega^{(\ell)} - h)) }{h^2} 	\\
								\;	& = \;
	\sigma_{\omega \omega}^{{\mathcal V}_\ell} (x^{(\ell)}, z^{(\ell)})
	\end{split}
\end{equation*}
where the inequality follows from 
$\sigma (x^{(\ell)}, \ri (\omega^{(\ell)} \pm h))  \leq \sigma^{{\mathcal V}_\ell} (x^{(\ell)}, \ri (\omega^{(\ell)} \pm h))$
due to monotonicity (Theorem \ref{thm:mono}) and 
$\sigma (x^{(\ell)}, \ri \omega^{(\ell)})	  =  \sigma^{{\mathcal V}_\ell} (x^{(\ell)}, \ri \omega^{(\ell)})$
due to the interpolation property (Theorem \ref{thm:interpolate}).
\end{proof}

\subsubsection{Derivatives of Distance Functions}
The key to our rate-of-convergence analysis is the interpolation properties between derivatives of 
${\mathcal D}(A(x))$ and ${\mathcal D}^{{\mathcal V}_\ell}(A(x))$. As a starting point for this analysis,
we extend the interpolation result of Theorem \ref{thm:interpolate} to first derivatives.
In what follows
$
	{\mathcal B}(y,\nu)  :=  \left\{ \widetilde{y} \; | \; \| \widetilde{y} - y \|_2 \leq \nu \right\}
$
refers to the closed ball (closed interval if $y$ is a scalar) of radius $\nu$ centered at $y$ either in 
a real Euclidean space or in a complex Euclidean space
depending on whether $y$ is real or complex.

\begin{theorem}[Hermite Interpolation of Distance Functions]\label{thm:Hermite_interpolate2}
The following hold regarding Algorithm \ref{alg} for every $\ell \in {\mathbb Z}^+$ and $j = 1,\dots,\ell$
such that $A(x^{(j)})$ is asymptotically stable: 
If ${\mathcal D}(A(x))$ and ${\mathcal D}^{{\mathcal V}_\ell}(A(x))$ are differentiable at $x = x^{(j)}$, then
		\[	
			\nabla {\mathcal D}(A(x^{(j)})) \;\; = \;\; \nabla {\mathcal D}^{{\mathcal V}_\ell}(A(x^{(j)})).
		\]
\end{theorem}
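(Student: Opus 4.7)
The plan is to reduce the gradient identity to the first-derivative interpolation of singular value functions already established in Theorem \ref{thm:Hermite_interpolate}. Since $A(x^{(j)})$ is asymptotically stable and both ${\mathcal D}(A(x))$ and ${\mathcal D}^{{\mathcal V}_\ell}(A(x))$ are differentiable at $x^{(j)}$, the two inner minimizations defining these distances attain their minima on the imaginary axis in a neighborhood of $x^{(j)}$: for the full problem this follows from the maximum modulus principle, while for the reduced problem the interpolation identity in Theorem \ref{thm:interpolate} gives this at $x^{(j)}$ itself, and the differentiability hypothesis ensures it persists locally. Let $\omega_\ast(x)$ and $\omega_\ast^{{\mathcal V}_\ell}(x)$ denote the respective minimizing imaginary parts, so that $\omega_\ast(x^{(j)}) = \omega_\ast^{{\mathcal V}_\ell}(x^{(j)}) = \omega^{(j)}$, where $z^{(j)} = \ri \omega^{(j)}$.

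Next, I would apply the chain rule to ${\mathcal D}(A(x)) = \sigma(x, \ri \omega_\ast(x))$ combined with the first-order optimality condition $\sigma_\omega(x^{(j)}, z^{(j)}) = 0$, which yields
\[
\nabla {\mathcal D}(A(x^{(j)})) \;=\; \sigma_x(x^{(j)}, z^{(j)})^T + \sigma_\omega(x^{(j)}, z^{(j)})\, \nabla \omega_\ast(x^{(j)}) \;=\; \sigma_x(x^{(j)}, z^{(j)})^T.
\]
The identical argument applied to ${\mathcal D}^{{\mathcal V}_\ell}(A(x))$, together with the vanishing derivative $\sigma^{{\mathcal V}_\ell}_\omega(x^{(j)}, z^{(j)}) = 0$ that also follows from part (ii) of Theorem \ref{thm:Hermite_interpolate}, gives $\nabla {\mathcal D}^{{\mathcal V}_\ell}(A(x^{(j)})) = \sigma^{{\mathcal V}_\ell}_x(x^{(j)}, z^{(j)})^T$. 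Hence the theorem reduces to proving
\[
\sigma_x(x^{(j)}, z^{(j)}) \;=\; \sigma^{{\mathcal V}_\ell}_x(x^{(j)}, z^{(j)}).
\]

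For this last identity I would invoke part (i) of Theorem \ref{thm:Hermite_interpolate}: if $u^{(j)}, v^{(j)}$ is a consistent pair of unit left and right singular vectors of $A(x^{(j)}) - z^{(j)} I$ corresponding to the smallest singular value, then $u^{(j)}, \vartheta^{(j)} := V_\ell^\ast v^{(j)}$ is a consistent pair for $A^{V_\ell}(x^{(j)}) - z^{(j)} V_\ell$. The Lancaster-type formula for the derivative of a simple singular value then gives, for every coordinate $x_k$,
\[
\sigma_{x_k}(x^{(j)}, z^{(j)}) \;=\; \Re\!\left( (u^{(j)})^\ast \frac{\partial A(x^{(j)})}{\partial x_k} v^{(j)} \right),
\]
\[
\sigma^{{\mathcal V}_\ell}_{x_k}(x^{(j)}, z^{(j)}) \;=\; \Re\!\left( (u^{(j)})^\ast \frac{\partial A(x^{(j)})}{\partial x_k} V_\ell V_\ell^\ast v^{(j)} \right).
\]
The crucial observation is that $v^{(j)} = v_j$ was inserted into $V_j$ at line 19 of Algorithm \ref{alg}, so $v^{(j)} \in {\mathcal V}_j \subseteq {\mathcal V}_\ell$ and therefore $V_\ell V_\ell^\ast v^{(j)} = v^{(j)}$, making the two right-hand sides identical.

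The main obstacle is the routine but delicate justification that the envelope-style chain rule is legitimate even though $z^{(j)}$ sits on the boundary of ${\mathbb C}^+$. This is handled by noting that, under asymptotic stability of $A(x^{(j)})$ and the differentiability hypothesis on the two distance functions, the minimization over ${\mathbb C}^+$ locally reduces to a one-dimensional minimization in $\omega$ along the imaginary axis, for which the standard envelope calculation is valid. A secondary technical point is that differentiability of ${\mathcal D}(A(x))$ at $x^{(j)}$ implicitly forces both uniqueness of $\omega^{(j)}$ as a minimizer and simplicity of the attaining singular value, which are precisely the regularity conditions needed to invoke the singular-value-derivative formulas above.
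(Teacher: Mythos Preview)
Your proposal is correct and follows essentially the same route as the paper: both arguments invoke part (i) of Theorem \ref{thm:Hermite_interpolate} to transfer the singular vectors, then apply the analytical derivative formula for singular value functions to conclude $\sigma_{x_s}(x^{(j)},z^{(j)}) = \sigma^{{\mathcal V}_\ell}_{x_s}(x^{(j)},z^{(j)})$. The only difference is cosmetic: you make the envelope/chain-rule step $\nabla {\mathcal D}(A(x^{(j)})) = \sigma_x(x^{(j)},z^{(j)})^T$ explicit via the optimality condition $\sigma_\omega = 0$, whereas the paper writes $\partial {\mathcal D}(A(x^{(j)}))/\partial x_s = \partial \sigma_{\min}(A(x^{(j)}) - z^{(j)} I)/\partial x_s$ directly without comment.
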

\begin{proof}
Suppose $v^{(j)}, u^{(j)}$ consist of a consistent pair of unit right, left singular vectors of $A(x^{(j)}) - z^{(j)} I$ 
corresponding to $\sigma_{\min}(A(x^{(j)}) - z^{(j)} I)$. By part (i) of Theorem \ref{thm:Hermite_interpolate}
the vectors $\vartheta^{(j)} := V_\ell^\ast v^{(j)}, u^{(j)}$ form a consistent pair of unit right, left singular vectors of 
$A^{V_\ell}(x^{(j)}) - z^{(j)} V_\ell$ corresponding to $\sigma_{\min}(A^{V_\ell}(x^{(j)}) - z^{(j)} V_\ell)$.
By employing the analytical formulas for the derivatives of singular value functions, we obtain
\begin{equation*}
	\begin{split}
	\frac{\partial {\mathcal D}(A(x^{(j)}))}{\partial x_s}
		 =
	\frac{\partial \sigma_{\min}(A(x^{(j)}) - z^{(j)} I)}{\partial x_s}
		=
	\Re
	\left(
		(u^{(j)})^\ast
		\frac{\partial \left\{ A(x^{(j)}) - z^{(j)} I \right\}}{\partial x_s}
		v^{(j)}
	\right) =	\hskip 6ex \\
	\Re
	\left(
		(u^{(j)})^\ast
		\frac{\partial \left\{ A^{V_\ell}(x^{(j)}) - z^{(j)} V_\ell \right\} }{\partial x_s}
		\vartheta^{(j)}
	\right)
		=
	\frac{\partial \sigma_{\min}(A^{V_\ell}(x^{(j)}) - z^{(j)} V_\ell)}{\partial x_s}	
		 =
	\frac{\partial {\mathcal D}^{{\mathcal V}_\ell}(A(x^{(j)}))}{\partial x_s}
	\end{split}
\end{equation*}
for $s = 1,\dots, d$. This completes the proof.
\end{proof}

\subsubsection{Extended Subspace Framework}\label{sec:extended_sf}
Before going into a formal rate-of-convergence analysis, we shall comment briefly on the extended
subspace framework, formally defined in Algorithm \ref{alge}. In the description of the algorithm, letting $e_p$ 
be the $p$th column of the $d\times d$ identity matrix, we employ the notations 
$e_{p,q} := 1/\sqrt{2} (e_p + e_q)$ if $p\neq q$ and $e_{p,p} := e_p$. The extended version adds 
the singular vectors not only at $x^{(\ell)}$, but also at the nearby points 
$x^{(\ell)} + h^{(\ell)} e_{p,q}$ for $p = 1,\dots, d, \; q = p,\dots,d$.
This obviously brings additional expenses, mainly the computation of the distance to instability at these nearby
points, as well as the computation of the corresponding right singular vectors. For instance, the cost of every
iteration for $d = 2$ is about four times that of the basic framework. Hence, this extended framework aims
to address the large-scale problems depending on a few parameters. 

The interpolation and Hermite interpolation results of Theorems \ref{thm:interpolate} and \ref{thm:Hermite_interpolate2}
do hold beyond $x^{(\ell)}$ also at the nearby points $x^{(\ell)} + h^{(\ell)} e_{p,q}$ for Algorithm \ref{alge}. These are 
formally stated in the next theorem. We omit its proof as the arguments are similar to those 
in the proofs of Theorems \ref{thm:interpolate} and \ref{thm:Hermite_interpolate2}.

\begin{theorem}\label{thm:Hermite_interpolate_ext}
The assertions of Theorems \ref{thm:interpolate} and \ref{thm:Hermite_interpolate2} are also satisfied
by the sequences generated by Algorithm \ref{alge}. Additionally, for Algorithm \ref{alge}, the following
hold for all $\ell \in {\mathbb Z}^+$, $j = 1,\dots, \ell$, $p = 1,\dots,d$ and $q = p, \dots, d$:
\begin{enumerate}
	\item[\bf (i)]
		$ 
			{\mathcal D}(A(x^{(j)}_{p,q}))	  =   \sigma_{\min}(A(x^{(j)}_{p,q}) - z^{(j)}_{p,q} I)  
									  =  \sigma_{\min}(A^{V_\ell}(x^{(j)}_{p,q}) - z^{(j)}_{p,q} V_\ell)	
									 = {\mathcal D}^{{\mathcal V}_\ell}(A(x^{(j)}_{p,q})).
		$ 
	\item[\bf (ii)] If ${\mathcal D}(A(x))$ and ${\mathcal D}^{{\mathcal V}_\ell}(A(x))$ are
			    differentiable at $x = x^{(j)}_{p,q}$, then
				\[	
					\nabla {\mathcal D}(A(x^{(j)}_{p,q})) \;\; = \;\; \nabla {\mathcal D}^{{\mathcal V}_\ell}(A(x^{(j)}_{p,q})).
				\]
\end{enumerate}
\end{theorem}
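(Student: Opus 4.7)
The proof proposal is as follows. The first sentence of the theorem, asserting that the conclusions of Theorems~\ref{thm:interpolate} and~\ref{thm:Hermite_interpolate2} remain valid for the sequences produced by Algorithm~\ref{alge}, should follow verbatim from the arguments already given. The reason is that, at each outer iteration, Algorithm~\ref{alge} still incorporates a unit right singular vector $v_{\ell+1}$ of $A(x^{(\ell+1)}) - z^{(\ell+1)} I$ (or an eigenvector in the unstable case) into the subspace. Consequently, exactly as in the proof of Theorem~\ref{thm:interpolate}, for every $j \le \ell$ the vector $v^{(j)}$ lies in ${\mathcal V}_\ell$, which is the only ingredient used in that proof; and as in Theorem~\ref{thm:Hermite_interpolate2}, this in turn forces $\vartheta^{(j)} := V_\ell^\ast v^{(j)}$ to be a consistent partner of $u^{(j)}$ for the reduced problem, from which Hermite interpolation at $x^{(j)}$ drops out via the standard analytical derivative formulas.

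For part (i) of the new claim, the plan is to mirror the proof of Theorem~\ref{thm:interpolate} at the auxiliary points. The crucial observation is that, by the construction of Algorithm~\ref{alge}, the unit right singular vector $v^{(j)}_{p,q}$ associated with $\sigma_{\min}(A(x^{(j)}_{p,q}) - z^{(j)}_{p,q} I)$ is appended to the basis at iteration $j$, so $v^{(j)}_{p,q} \in {\mathcal V}_\ell$ for every $\ell \ge j$. Hence there exists a unit vector $\alpha^{(j)}_{p,q}$ with $v^{(j)}_{p,q} = V_\ell \alpha^{(j)}_{p,q}$, and the chain of inequalities
\begin{equation*}
\begin{split}
{\mathcal D}(A(x^{(j)}_{p,q})) &\;=\; \sigma_{\min}(A(x^{(j)}_{p,q}) - z^{(j)}_{p,q} I) \;=\; \| (A(x^{(j)}_{p,q}) - z^{(j)}_{p,q} I) v^{(j)}_{p,q} \|_2 \\
&\;=\; \| (A^{V_\ell}(x^{(j)}_{p,q}) - z^{(j)}_{p,q} V_\ell) \alpha^{(j)}_{p,q} \|_2 \;\ge\; \sigma_{\min}(A^{V_\ell}(x^{(j)}_{p,q}) - z^{(j)}_{p,q} V_\ell) \\
&\;\ge\; {\mathcal D}^{{\mathcal V}_\ell}(A(x^{(j)}_{p,q}))
\end{split}
\end{equation*}
combines with the reverse inequality ${\mathcal D}(A(x^{(j)}_{p,q})) \le {\mathcal D}^{{\mathcal V}_\ell}(A(x^{(j)}_{p,q}))$ from monotonicity (Theorem~\ref{thm:mono}) to yield equality throughout, which is precisely the assertion of~(i). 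In the unstable case every quantity is zero and there is nothing further to check.

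For part (ii), the plan is first to transport part (i) of Theorem~\ref{thm:Hermite_interpolate} to the auxiliary points: with $v^{(j)}_{p,q}, u^{(j)}_{p,q}$ a consistent pair of unit right, left singular vectors corresponding to $\sigma_{\min}(A(x^{(j)}_{p,q}) - z^{(j)}_{p,q} I)$, a multiplication of $(A(x^{(j)}_{p,q}) - z^{(j)}_{p,q} I) v^{(j)}_{p,q} = \sigma u^{(j)}_{p,q}$ from the right by $V_\ell \vartheta^{(j)}_{p,q}$ with $\vartheta^{(j)}_{p,q} := V_\ell^\ast v^{(j)}_{p,q}$, together with the dual identity for the left singular vector, produces a consistent singular pair $(\vartheta^{(j)}_{p,q}, u^{(j)}_{p,q})$ for the reduced problem at $(x^{(j)}_{p,q}, z^{(j)}_{p,q})$. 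Once this is in hand, the analytical formulas for the derivative of a simple singular value, applied as in the proof of Theorem~\ref{thm:Hermite_interpolate2}, yield
\[
	\frac{\partial {\mathcal D}(A(x^{(j)}_{p,q}))}{\partial x_s}
	\;=\;
	\Re\!\left( (u^{(j)}_{p,q})^\ast \frac{\partial A(x^{(j)}_{p,q})}{\partial x_s} v^{(j)}_{p,q} \right)
	\;=\;
	\Re\!\left( (u^{(j)}_{p,q})^\ast \frac{\partial A^{V_\ell}(x^{(j)}_{p,q})}{\partial x_s} \vartheta^{(j)}_{p,q} \right)
	\;=\;
	\frac{\partial {\mathcal D}^{{\mathcal V}_\ell}(A(x^{(j)}_{p,q}))}{\partial x_s}
\]
for $s = 1, \ldots, d$, which is the desired gradient equality.

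The only subtle point I expect is verifying that the construction of Algorithm~\ref{alge} indeed places $v^{(j)}_{p,q}$ into ${\mathcal V}_j$ (and hence into every subsequent ${\mathcal V}_\ell$) and that the appropriate singular value at $(x^{(j)}_{p,q}, z^{(j)}_{p,q})$ is simple when differentiability is assumed; once these are confirmed the proofs of the previous theorems transfer mechanically, with $x^{(j)}$ and $v^{(j)}$ replaced by $x^{(j)}_{p,q}$ and $v^{(j)}_{p,q}$ respectively.
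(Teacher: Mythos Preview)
Your proposal is correct and takes essentially the same approach as the paper, which in fact omits the proof entirely with the remark that the arguments are similar to those of Theorems~\ref{thm:interpolate} and~\ref{thm:Hermite_interpolate2}. You have correctly identified the sole new ingredient, namely that Algorithm~\ref{alge} explicitly appends $v^{(j)}_{p,q}$ to the basis when forming $V_j$, so $v^{(j)}_{p,q} \in {\mathcal V}_\ell$ for all $\ell \ge j$; with this in hand, the chains of equalities in the proofs of Theorems~\ref{thm:interpolate}, \ref{thm:Hermite_interpolate}, and~\ref{thm:Hermite_interpolate2} carry over verbatim with $x^{(j)}_{p,q}$, $z^{(j)}_{p,q}$, $v^{(j)}_{p,q}$ in place of $x^{(j)}$, $z^{(j)}$, $v_j$.
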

Additionally, we remark that the global convergence result of Theorem \ref{thm:global_conv}
also applies to the sequence $\{ x^{(\ell)} \}$ generated by Algorithm \ref{alge}.

\begin{algorithm}
 \begin{algorithmic}[1]
\REQUIRE{ The matrix-valued function $A(x)$ of the form (\ref{eq:prob_defn}) with the feasible region $\widetilde{\Omega}$.}
\ENSURE{ The sequences $\{ x^{(\ell)} \}$, $\{ z^{(\ell)} \}$. }
\STATE $x^{(1)} \gets$ a random point in $\widetilde{\Omega}$.
\IF{$\Lambda(A(x^{(1)})) \cap {\mathbb C}^+ = \emptyset$}
	\STATE $z^{(1)} \gets {\rm i} \cdot \arg \min_{\omega \in {\mathbb R}} \; \sigma_{\min} (A(x^{(1)}) - \omega \ri I)$.
	\STATE $V_1	\;	\gets \;$ a unit right singular vector corresponding to $\sigma_{\min} (A(x^{(1)}) - z^{(1)} I)$. 
\ELSE
	\STATE $z^{(1)} \; \gets \;$ an eigenvalue in $\Lambda(A(x^{(1)})) \cap {\mathbb C}^+$.
	\STATE $V_{1}	\;	\gets \;$ a unit eigenvector corresponding to the eigenvalue $z^{(1)}$ of $A(x^{(1)})$.
\ENDIF
\STATE ${\mathcal V}_1 \; \gets \; {\rm span} \{ V_1 \}$.
\FOR{$\ell \; = \; 1, \; 2, \; \dots$}
	\STATE  $x^{(\ell+1)} \gets \arg \max_{x \in \widetilde{\Omega}}	\: {\mathcal D}^{{\mathcal V}_\ell} (A(x))$.
	\IF{$\Lambda(A(x^{(\ell+1)})) \cap {\mathbb C}^+ = \emptyset$}
		\STATE $z^{(\ell+1)} \gets {\rm i} \cdot \arg \min_{\omega \in {\mathbb R}} \; \sigma_{\min} (A(x^{(\ell+1)}) - \omega \ri I)$.
		\STATE $v_{\ell+1}	\;	\gets \;$ a right singular vector corresponding to $\sigma_{\min} (A(x^{(\ell+1)}) - z^{(\ell+1)} I)$. 
	\ELSE
		\STATE $z^{(\ell+1)} \; \gets \;$ an eigenvalue in $\Lambda(A(x^{(\ell+1)})) \cap {\mathbb C}^+$.
		\STATE $v_{\ell+1}	\;	\gets \;$ an eigenvector corresponding to the eigenvalue $z^{(\ell+1)}$ of $A(x^{(\ell+1)})$.
	\ENDIF
	\STATE $h^{(\ell+1)} \; \gets \| x^{(\ell+1)}  -  x^{(\ell)}  \|_2$
	\FOR{$p = 1, \dots, d$}
		\FOR{$q = p, \dots, d$}
			\STATE $x^{(\ell+1)}_{p,q} \; \gets \; x^{(\ell+1)} + h^{(\ell+1)} e_{p,q}$.
			\IF{$\Lambda(A(x^{(\ell+1)}_{p,q})) \cap {\mathbb C}^+ = \emptyset$}
				\STATE $z^{(\ell+1)}_{p,q} \; \gets \; \ri \cdot \arg \min_{\omega \in {\mathbb R}} \; \sigma_{\min} (A(x^{(\ell+1)}_{p,q}) - \omega \ri I)$.
				\STATE $v^{(\ell+1)}_{p,q}	\;	\gets \;$ a right singular vector \\
					\hskip 25ex		corresponding to $\sigma_{\min} (A(x^{(\ell+1)}_{p,q}) - z^{(\ell+1)}_{p,q} I)$.
			\ELSE
				\STATE $z^{(\ell+1)}_{p,q} \; \gets \;$ an eigenvalue in $\Lambda(A(x^{(\ell+1)}_{p,q})) \cap {\mathbb C}^+$.
				\STATE $v^{(\ell+1)}_{p,q}	\;	\gets \;$ an eigenvector \\
					\hskip 18ex		corresponding to the eigenvalue $z^{(\ell+1)}_{p,q}$ of $A(x^{(\ell+1)}_{p,q})$.
			\ENDIF
		\ENDFOR
	\ENDFOR
	\STATE $V_{\ell+1} \gets {\rm orth} 
			\left(\left[ \begin{array}{cccccccc} V_{\ell} & v_{\ell+1} & v^{(\ell+1)}_{1,1} & \dots & v^{(\ell+1)}_{1,d} & v^{(\ell+1)}_{2,2} & \dots & v^{(\ell+1)}_{d,d} \end{array} \right] \right)$.
	\STATE ${\mathcal V}_{\ell+1}  \gets  {\rm Col} (V_{\ell+1})$.
\ENDFOR
 \end{algorithmic}
\caption{The Extended Subspace Framework}
\label{alge}
\end{algorithm}

\subsubsection{Rate-of-Convergence Analysis}\label{sec:rate_convergence}

The rate-of-convergence analysis here is inspired by \cite{Kangal2015}. Especially the Hermite
interpolation property (Theorems \ref{thm:Hermite_interpolate2} and \ref{thm:Hermite_interpolate_ext}) 
facilitates this. However, the fact that ${\mathcal D}(A(x)), {\mathcal D}^{{\mathcal V}_\ell}(A(x))$ are 
defined in terms of the global minimizers of $\sigma(x,z), \sigma^{{\mathcal V}_\ell}(x,z)$ over 
$z \in {\mathbb C}^+$ brings subtleties. Our first task is to show that 
${\mathcal D}(A(x)) = \sigma(x,\ri \omega(x))$, 
${\mathcal D}^{{\mathcal V}_\ell}(x) = \sigma^{{\mathcal V}_\ell}(x,\ri \omega^{{\mathcal V}_\ell}(x))$
for all $x$ close to $x_\ast := \lim_{\ell \rightarrow \infty} x^{(\ell)}$ (recall that $\{ x^{(\ell)} \}$ itself
is assumed to be convergent, and Theorem \ref{thm:global_conv} ensures that its limit $x_\ast$
is a global maximizer of ${\mathcal D}(A(x))$ over $x \in \widetilde{\Omega}$), where $\omega(x)$, 
$\omega^{{\mathcal V}_\ell}(x)$ are the real-valued functions defined implicitly by $\sigma_\omega(x,\ri \omega(x)) =0$, 
$\sigma^{{\mathcal V}_\ell}_\omega(x, \ri \omega^{{\mathcal V}_\ell}(x)) = 0$.
Throughout this section, for $x \in {\mathbb R}^d, z \in {\mathbb C}$ and $\epsilon_1, \epsilon_2 > 0$,
we employ the notations
\begin{equation*} 
	\begin{split}
	{\mathcal N}(x, z; \epsilon_1, \epsilon_2) := 
		\{ (\widetilde{x},\widetilde{z}) \in {\mathbb R}^d \times {\mathbb C}
			\; | \; \| \widetilde{x} - x \|_2 \leq \epsilon_1, \;\; | \widetilde{z} - z | \leq \epsilon_2 \},	\;\;	{\rm and}	\\
	{\mathcal N}(x, \Im z; \epsilon_1, \epsilon_2) := 
		\{ (\widetilde{x},\widetilde{\omega}) \in {\mathbb R}^d \times {\mathbb R}
		\; | \; \| \widetilde{x} - x \|_2 \leq \epsilon_1, \;\; | \widetilde{\omega} - \Im z | \leq \epsilon_2 \}	\hskip 6ex
	\end{split}
\end{equation*}
for neighborhoods of $(x,z)$ and $(x, \Im z)$.
Moreover, $\ri {\mathbb R}$ refers to the set of purely imaginary numbers.

\begin{lemma}\label{thm:lbounds_sder}
Suppose that $\sigma(x_\ast,z_\ast)$ is simple, where $z_\ast \in \ri {\mathbb R}$ is the global minimizer of
$\sigma(x_\ast,z)$ over $z \in {\mathbb C}^+$, which we assume is unique. Additionally, assume
$\: \sigma_{\omega\omega}(x_\ast,z_\ast) = \delta   >   0 \:$. There exist neighborhoods
	${\mathcal N}(x_\ast, z_\ast; \epsilon_1, \epsilon_2)$ 
		and
	${\mathcal N}(x_\ast, \Im z_\ast; \widetilde{\epsilon}_1, \widetilde{\epsilon}_2)$
of $(x_\ast, z_\ast)$ and $(x_\ast, \Im z_\ast)$ with $\: \widetilde{\epsilon}_1 \leq \epsilon_1$, 
$\: \widetilde{\epsilon}_2 \leq \epsilon_2 \:$such that
\begin{enumerate}
	\item[\bf (i)] the singular values $\sigma(x,z)$ and $\sigma^{{\mathcal V}_\ell}(x,z)$ for all $\ell$ large enough
	remain simple and their first three derivatives are bounded above by constants uniformly
	for all $(x,z) \in {\mathcal N}(x_\ast, z_\ast; \epsilon_1, \epsilon_2)$, where the constants are independent of $\ell$,
	\item[\bf (ii)] $\sigma_{\omega \omega} (\widetilde{x}, \ri \widetilde{\omega}) \geq 3\delta/4$	
	$\: \forall (\widetilde{x}, \widetilde{\omega}) \in {\mathcal N}(x_\ast, \Im z_\ast; \widetilde{\epsilon}_1, \widetilde{\epsilon}_2)$, and
	\item[\bf (iii)] $\forall \ell$ large enough,
	$\sigma^{{\mathcal V}_\ell}_{\omega \omega} (\widetilde{x}, \ri \widetilde{\omega}) \geq \delta/2$	
	$\: \forall (\widetilde{x}, \widetilde{\omega}) \in {\mathcal N}(x_\ast, \Im z_\ast; \widetilde{\epsilon}_1, \widetilde{\epsilon}_2)$.
\end{enumerate}
\end{lemma}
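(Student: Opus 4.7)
\medskip

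\noindent\textbf{Proof proposal.} The plan is to treat the three assertions in order, relying on three ingredients: continuity of singular values of $A(x)-zI$ together with the assumed simplicity at $(x_\ast,z_\ast)$; a uniform gap property for the reduced problems that follows from the interpolation (Theorem \ref{thm:interpolate}, part (i) of Theorem \ref{thm:Hermite_interpolate}) together with the monotonicity/interlacing inequality (Theorem \ref{thm:mono}); and finally Lemma \ref{thm:sval_sder_ineq} combined with a Taylor expansion argument.

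For part (i), I first observe that since $\sigma(x_\ast,z_\ast)$ is simple, there is a gap $g>0$ between $\sigma(x_\ast,z_\ast)$ and the next smallest singular value of $A(x_\ast)-z_\ast I$. By the continuous dependence of the singular values on the operator and the Lipschitz continuity of $x,z\mapsto A(x)-zI$, this gap remains $\ge g/2$ throughout a neighborhood ${\mathcal N}(x_\ast,z_\ast;\epsilon_1,\epsilon_2)$. For the reduced operators, I would use the one-sided interlacing $\sigma_k^{{\mathcal V}_\ell}(x,z)\ge\sigma_k(x,z)$ applied to the $k$th-smallest singular value, together with the Lipschitz continuity from Lemma \ref{thm:Lip_cont} (in $x$, uniform in $\ell$) and an analogous estimate in $z$ with Lipschitz constant $1$. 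Since the interpolation guarantees $\sigma^{{\mathcal V}_\ell}(x^{(\ell)},z^{(\ell)})=\sigma(x^{(\ell)},z^{(\ell)})$ and $(x^{(\ell)},z^{(\ell)})\to(x_\ast,z_\ast)$ (see below), these estimates force $\sigma^{{\mathcal V}_\ell}(x,z)$ to stay uniformly close to $\sigma(x_\ast,z_\ast)$, while $\sigma_2^{{\mathcal V}_\ell}(x,z)\ge\sigma_2(x,z)$ stays uniformly bounded below by $\sigma_2(x_\ast,z_\ast)-O(\epsilon_1,\epsilon_2)$. Shrinking $\epsilon_1,\epsilon_2$ yields a uniform spectral gap, hence uniform simplicity, for all sufficiently large $\ell$. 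The uniform bounds on the first three derivatives then follow from the standard analytic formulas for derivatives of simple singular values \cite{Lancaster1964, Bunse-Gerstner1991}, since those formulas involve only $\|A_j\|$, $|f_j|$ and its derivatives, the isolated singular value itself, and the reciprocal of the gap — all uniformly controlled on ${\mathcal N}(x_\ast,z_\ast;\epsilon_1,\epsilon_2)$ independently of $\ell$.

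Part (ii) is then immediate: once (i) is in place, $\sigma_{\omega\omega}(x,\ri\omega)$ is continuous on ${\mathcal N}(x_\ast,z_\ast;\epsilon_1,\epsilon_2)$, so using $\sigma_{\omega\omega}(x_\ast,z_\ast)=\delta$ and shrinking to a subneighborhood ${\mathcal N}(x_\ast,\Im z_\ast;\widetilde{\epsilon}_1,\widetilde{\epsilon}_2)$ with $\widetilde{\epsilon}_1\le\epsilon_1$, $\widetilde{\epsilon}_2\le\epsilon_2$ makes $|\sigma_{\omega\omega}(\widetilde{x},\ri\widetilde{\omega})-\delta|\le\delta/4$, giving $\sigma_{\omega\omega}\ge 3\delta/4$ as claimed.

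For part (iii), the strategy is to obtain the lower bound at the interpolation points $(x^{(\ell)},z^{(\ell)})$ via Lemma \ref{thm:sval_sder_ineq}, and then to spread the bound to a full neighborhood using the uniform third-derivative bound from (i). First, I need $z^{(\ell)}\to z_\ast$; this follows because $x^{(\ell)}\to x_\ast$ (by the hypothesis of the rate-of-convergence analysis and Theorem \ref{thm:global_conv}), $z^{(\ell)}$ is the global minimizer of $\sigma(x^{(\ell)},\ri\omega)$, $z_\ast$ is by assumption the \emph{unique} minimizer of $\sigma(x_\ast,\ri\omega)$, and the family $\sigma(x^{(\ell)},\ri\omega)$ is equicoercive in $\omega$ (grows to infinity with $|\omega|$ uniformly in $\ell$) and converges uniformly on compacta. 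Lemma \ref{thm:sval_sder_ineq} then gives $\sigma^{{\mathcal V}_\ell}_{\omega\omega}(x^{(\ell)},z^{(\ell)})\ge\sigma_{\omega\omega}(x^{(\ell)},z^{(\ell)})\to\delta$, so $\sigma^{{\mathcal V}_\ell}_{\omega\omega}(x^{(\ell)},z^{(\ell)})\ge 7\delta/8$ eventually. Now, applying the mean value theorem to $(\widetilde{x},\widetilde{\omega})\mapsto\sigma^{{\mathcal V}_\ell}_{\omega\omega}(\widetilde{x},\ri\widetilde{\omega})$, whose gradient is uniformly bounded (in $\ell$) by the third-derivative bound from part (i), I obtain
\[
\bigl|\sigma^{{\mathcal V}_\ell}_{\omega\omega}(\widetilde{x},\ri\widetilde{\omega})-\sigma^{{\mathcal V}_\ell}_{\omega\omega}(x^{(\ell)},z^{(\ell)})\bigr|
\;\le\; C\bigl(\|\widetilde{x}-x^{(\ell)}\|_2+|\widetilde{\omega}-\Im z^{(\ell)}|\bigr)
\]
for a constant $C$ independent of $\ell$. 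For $(\widetilde{x},\widetilde{\omega})\in{\mathcal N}(x_\ast,\Im z_\ast;\widetilde{\epsilon}_1,\widetilde{\epsilon}_2)$ and $\ell$ large, the right-hand side is at most $C(2\widetilde{\epsilon}_1+2\widetilde{\epsilon}_2+o(1))$, which can be made $\le 3\delta/8$ by choosing $\widetilde{\epsilon}_1,\widetilde{\epsilon}_2$ small enough and $\ell$ large enough. Combining yields $\sigma^{{\mathcal V}_\ell}_{\omega\omega}(\widetilde{x},\ri\widetilde{\omega})\ge 7\delta/8-3\delta/8=\delta/2$.

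The main obstacle I anticipate is part (i), specifically proving the uniform spectral gap and the uniform derivative bounds \emph{independently of $\ell$} in the infinite-dimensional setting, because the reduced spaces ${\mathcal V}_\ell$ grow with $\ell$ and one must rule out the possibility that a second singular value of $A^{V_\ell}(x)-zV_\ell$ descends near the smallest. The combination of one-sided interlacing (for the lower bound on $\sigma_2^{{\mathcal V}_\ell}$) and uniform Lipschitz continuity together with interpolation at $(x^{(\ell)},z^{(\ell)})$ (for the upper bound on $\sigma_1^{{\mathcal V}_\ell}$ in a neighborhood) is the key mechanism that I expect to carry this through.
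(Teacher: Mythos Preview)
Your proposal is correct and follows essentially the same route as the paper: for (i) the paper simply cites \cite[Proposition~2.9]{Kangal2015}, whose content is precisely the interlacing-plus-interpolation gap argument you sketch; for (ii) both you and the paper use continuity (via the third-derivative bound from (i)) to shrink the neighborhood; and for (iii) both arguments first establish $z^{(\ell)}\to z_\ast$ from uniqueness of $z_\ast$, then invoke Lemma~\ref{thm:sval_sder_ineq} at $(x^{(\ell)},z^{(\ell)})$ and propagate the lower bound to the full neighborhood using the uniform bound on $\sigma^{{\mathcal V}_\ell}_{\omega\omega\omega}$. The only cosmetic difference is your choice of intermediate constants ($7\delta/8$ versus the paper's $3\delta/4$), which is immaterial.
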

\begin{proof}
For assertion (i) we refer to \cite[Proposition 2.9]{Kangal2015}. By the boundedness 
of $\sigma_{\omega \omega \omega} (x, \ri \omega)$ in ${\mathcal N}(x_\ast,z_\ast; \epsilon_1,\epsilon_2)$ and 
$\sigma_{\omega\omega}(x_\ast,z_\ast) = \delta$, we infer a neighborhood 
$\widehat{\mathcal N} := 
{\mathcal N}(x_\ast, \Im z_\ast; \widehat{\epsilon}_1, \widehat{\epsilon}_2) \subseteq {\mathcal N}(x_\ast,z_\ast,\epsilon_1,\epsilon_2)$ 
of $(x_\ast, \Im z_\ast)$ such that $\sigma_{\omega \omega} (\widetilde{x}, \ri \widetilde{\omega}) \geq 3\delta/4$ for all 
$(\widetilde{x}, \widetilde{\omega}) \in \widehat{\mathcal N}$.

Now, since 
$\lim_{\ell\rightarrow \infty} \sigma(x^{(\ell)}, z^{(\ell)}) = \lim_{\ell\rightarrow \infty} {\mathcal D}(A(x^{(\ell)})) = \sigma(x_\ast, z_\ast)$, 
by the uniqueness of the global minimizer $z_\ast$ and the continuity of $\sigma(x,z)$,
we must have $z^{(\ell)} \rightarrow z_\ast$. Choose $\ell$ large enough so that 
$(x^{(\ell)}, \Im z^{(\ell)})$ is in $\widehat{\mathcal N}$. By Lemma \ref{thm:sval_sder_ineq}
\[
	 \sigma_{\omega \omega}^{{\mathcal V}_\ell} (x^{(\ell)}, z^{(\ell)}) 
	 			\geq 
		\sigma_{\omega \omega} (x^{(\ell)}, z^{(\ell)}) \geq 3\delta/4
\]
for all such large $\ell$. Finally, since $\sigma_{\omega \omega \omega}^{{\mathcal V}_\ell} (x, \ri \omega)$ 
are also uniformly bounded in $\widehat{\mathcal N}$ by a constant independent of $\ell$, there must
exist a neighborhood $\widetilde{\mathcal N} \subseteq \widehat{\mathcal N}$ of $(x_\ast, \Im z_\ast)$
such that $\sigma_{\omega\omega}(\widetilde{x}, \ri \widetilde{\omega}) \geq \delta/2$ for all 
$(\widetilde{x}, \widetilde{\omega}) \in \widetilde{\mathcal N}$ and all $\ell$ sufficiently large.
\end{proof}

The significance of the last lemma is that it implies the existence of the functions $\widetilde{\omega}(x)$ and 
$\widetilde{\omega}^{{\mathcal V}_\ell}(x)$ for $x$ near $x_\ast$ such that 
$\ri \widetilde{\omega}(x)$ and $\ri \widetilde{\omega}^{{\mathcal V}_\ell}(x)$
satisfy the first order optimality conditions to be a minimizer of $\sigma(x,z)$ and $\sigma^{{\mathcal V}_\ell}(x,z)$, respectively, 
over $z \in {\mathbb C}^+$. This is formally stated by the next result.
\begin{lemma}[Local Representations of the Minimizers]\label{thm:local_rep}
Suppose $(x_\ast, z_\ast)$ is as in Lemma \ref{thm:lbounds_sder} and satisfies the assumptions of that
lemma. Additionally, suppose $\sigma_{\alpha}(x_\ast,z_\ast)  >   0 \:$. For some $\eta_1, \eta_2 > 0$, the following hold:
\begin{enumerate}
	\item[\bf (i)] There exists a unique three times differentiable function 
	$\widetilde{\omega}(x) : {\mathcal B}(x_\ast,\eta_1) \rightarrow {\mathcal B}(\Im z_\ast,\eta_2)$ 
	such that $\widetilde{\omega}(x_\ast) = 	\Im z_\ast$, as well as
	\begin{equation}\label{eq:implicit_omega}
		\sigma_\omega(x,\ri \widetilde{\omega}(x)) = 0 \quad	{\rm and}	\quad 
		\sigma_{\omega \omega}(x,\ri \widetilde{\omega}(x)) \geq \delta/2	\quad	
		\forall  x \in {\mathcal B}(x_\ast, \eta_1).
	\end{equation}
	Furthermore, $\ri \widetilde{\omega}(x)$ is the unique point in the ball ${\mathcal B}(z_\ast,\eta_2)$
	that satisfies the first order optimality conditions to be a minimizer of $\sigma(x,z)$
	over $z \in {\mathbb C}^+$ for all $x \in {\mathcal B}(x_\ast, \eta_1)$;
	\item[\bf (ii)] For all $\ell$ large enough, in particular satisfying
	$(x^{(\ell)}, z^{(\ell)}) \in {\mathcal N}(x_\ast, z_\ast; \eta_1, \eta_2)$,
	there exists a unique three times differentiable function 
	$\widetilde{\omega}^{{\mathcal V}_\ell}(x) : {\mathcal B}(x_\ast,\eta_1) \rightarrow {\mathcal B}(\Im z_\ast,\eta_2)$   
	such that $\widetilde{\omega}^{{\mathcal V}_\ell}(x^{(\ell)}) = \Im z^{(\ell)}$, as well as
	\begin{equation}\label{eq:implciti_omegar}
		\sigma_\omega^{{\mathcal V}_\ell}(x, \ri \widetilde{\omega}^{{\mathcal V}_\ell}(x)) = 0 \quad		{\rm and}	\quad
		\sigma_{\omega\omega}^{{\mathcal V}_\ell}(x, \ri \widetilde{\omega}^{{\mathcal V}_\ell}(x)) \geq \delta/2	\quad
		\forall  x \in {\mathcal B}(x_\ast, \eta_1).
	\end{equation}
	Furthermore, for such $\ell$ and for all $x \in {\mathcal B}(x_\ast, \eta_1)$,
	$\ri \widetilde{\omega}^{{\mathcal V}_\ell}(x)$ is the unique point in the ball ${\mathcal B}(z_\ast,\eta_2)$
	that satisfies the first order optimality conditions to be a minimizer of $\sigma^{{\mathcal V}_\ell}(x,z)$
	over $z \in {\mathbb C}^+$.
%
\end{enumerate}
\end{lemma}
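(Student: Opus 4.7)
The plan is to apply the implicit function theorem twice: once to $\sigma_\omega(x,\ri \omega) = 0$ for part (i), and once (for each large $\ell$) to $\sigma^{{\mathcal V}_\ell}_\omega(x,\ri \omega) = 0$ for part (ii), and then to deduce uniqueness of the critical point inside ${\mathcal B}(z_\ast,\eta_2)$ by exploiting the strict positivity of $\sigma_\alpha$ at $(x_\ast,z_\ast)$ to rule out interior critical points.

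For part (i) I would first note that $z_\ast \in \ri {\mathbb R}$ is a minimizer of $\sigma(x_\ast,z)$ over $z \in {\mathbb C}^+$, so first-order optimality along the imaginary axis forces $\sigma_\omega(x_\ast,z_\ast) = 0$. Lemma \ref{thm:lbounds_sder} (i) supplies the requisite smoothness of $\sigma$ on ${\mathcal N}(x_\ast,z_\ast;\epsilon_1,\epsilon_2)$, and the hypothesis $\sigma_{\omega\omega}(x_\ast,z_\ast) = \delta > 0$ is precisely the nonsingularity condition needed for the implicit function theorem. Applying it yields a unique function $\widetilde{\omega}(x)$ on a closed ball ${\mathcal B}(x_\ast,\eta_1)$ with values in ${\mathcal B}(\Im z_\ast,\eta_2)$ satisfying $\widetilde{\omega}(x_\ast) = \Im z_\ast$ and $\sigma_\omega(x,\ri \widetilde{\omega}(x)) = 0$; shrinking $\eta_1,\eta_2$ further so the graph lies inside the neighborhood of Lemma \ref{thm:lbounds_sder} (ii) also forces $\sigma_{\omega\omega}(x,\ri \widetilde{\omega}(x)) \geq 3\delta/4 \geq \delta/2$. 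For the uniqueness claim, I would argue that any first-order critical point $z = \alpha + \ri \omega$ of $\sigma(x,\cdot)$ in ${\mathcal B}(z_\ast,\eta_2) \cap {\mathbb C}^+$ either sits in the open right half-plane, where $\sigma_\alpha = \sigma_\omega = 0$, or on the imaginary axis with $\sigma_\omega = 0$; continuity of $\sigma_\alpha$ together with $\sigma_\alpha(x_\ast,z_\ast) > 0$ permits shrinking $\eta_1,\eta_2$ once more to keep $\sigma_\alpha > 0$ throughout ${\mathcal N}(x_\ast,z_\ast;\eta_1,\eta_2)$, excluding the interior case, while $\sigma_{\omega\omega} \geq \delta/2$ on the boundary $\alpha = 0$ renders $\sigma_\omega(x,\ri \cdot)$ strictly increasing, so $\omega = \widetilde{\omega}(x)$ is the only solution.

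For part (ii) I would execute the same strategy with $\sigma^{{\mathcal V}_\ell}$ in place of $\sigma$, but anchor the implicit function theorem at the moving center $(x^{(\ell)},\Im z^{(\ell)})$ rather than at $(x_\ast,\Im z_\ast)$. Its hypotheses come straight from earlier results: $\sigma^{{\mathcal V}_\ell}_\omega(x^{(\ell)},\ri \Im z^{(\ell)}) = 0$ follows from Theorem \ref{thm:Hermite_interpolate} (ii) together with the first-order optimality of $\omega^{(\ell)} = \Im z^{(\ell)}$ for $\sigma(x^{(\ell)},\ri \omega)$, while $\sigma^{{\mathcal V}_\ell}_{\omega\omega}(x^{(\ell)},\ri \Im z^{(\ell)}) \geq \delta/2$ is exactly Lemma \ref{thm:lbounds_sder} (iii). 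Uniqueness is handled in parallel: by Theorem \ref{thm:Hermite_interpolate} (iii), $\sigma^{{\mathcal V}_\ell}_\alpha(x^{(\ell)},z^{(\ell)}) = \sigma_\alpha(x^{(\ell)},z^{(\ell)}) \to \sigma_\alpha(x_\ast,z_\ast) > 0$, and the uniform derivative bounds of Lemma \ref{thm:lbounds_sder} (i) propagate this positivity across ${\mathcal N}(x_\ast,z_\ast;\eta_1,\eta_2)$ for every sufficiently large $\ell$, ruling out interior critical points; on the boundary, $\sigma^{{\mathcal V}_\ell}_{\omega\omega} \geq \delta/2$ again supplies monotonicity and hence uniqueness.

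The main difficulty will be the uniformity in $\ell$: the radii $\eta_1,\eta_2$ must be independent of $\ell$, even though the implicit function theorem is anchored at a moving point $(x^{(\ell)},\Im z^{(\ell)})$. The way around this is a quantitative form of the implicit function theorem, in which the radius of the domain of validity is controlled from below by bounds on derivatives of the defining function and a lower bound on the partial to be inverted. The uniform third-derivative bound from Lemma \ref{thm:lbounds_sder} (i) and the uniform lower bound $\sigma^{{\mathcal V}_\ell}_{\omega\omega} \geq \delta/2$ from Lemma \ref{thm:lbounds_sder} (iii) together produce $\widetilde{\omega}^{{\mathcal V}_\ell}$ on a ball about $x^{(\ell)}$ of radius bounded below independently of $\ell$; since $x^{(\ell)} \to x_\ast$, a single $\eta_1$ then works for every large $\ell$.
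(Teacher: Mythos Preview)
Your proposal is correct and follows essentially the same route as the paper: apply the implicit function theorem to $\sigma_\omega(x,\ri\omega)=0$ at $(x_\ast,\Im z_\ast)$ for part (i) and to $\sigma^{{\mathcal V}_\ell}_\omega(x,\ri\omega)=0$ at $(x^{(\ell)},\Im z^{(\ell)})$ for part (ii), invoke the uniform lower bound $\sigma_{\omega\omega}, \sigma^{{\mathcal V}_\ell}_{\omega\omega}\geq \delta/2$ from Lemma~\ref{thm:lbounds_sder} together with the uniform derivative bounds there to obtain $\ell$-independent radii, and then rule out interior first-order critical points via $\sigma_\alpha>0$ (transferred to $\sigma^{{\mathcal V}_\ell}_\alpha$ at $(x^{(\ell)},z^{(\ell)})$ through Theorem~\ref{thm:Hermite_interpolate}(iii) and propagated by the uniform bound on $\sigma^{{\mathcal V}_\ell}_{\alpha\alpha}$). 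Your monotonicity argument for uniqueness on the imaginary axis is a slightly more explicit variant of what the paper deduces from the implicit function theorem's uniqueness clause combined with $\sigma_{\omega\omega}\geq\delta/2$ on the whole interval, but the content is the same.
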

\begin{proof}
Consider the neighborhood ${\mathcal N}(x_\ast, \Im z_\ast; \widetilde{\epsilon}_1, \widetilde{\epsilon}_2)$ as in Lemma \ref{thm:lbounds_sder}.
For all $(x,\omega)$ in this neighborhood, the singular values $\sigma(x,\ri \omega)$ and $\sigma^{{\mathcal V}_\ell}(x,\ri \omega)$
for large $\ell$ are simple and their second derivatives with respect to $\omega$ are bounded below by $\delta/2$ uniformly. 
By the implicit function theorem,
for some $\eta_1 < \widetilde{\epsilon}_1$,  $\eta_2 < \widetilde{\epsilon}_2$, there exist a unique function $\widetilde{\omega}(x)$ satisfying 
(\ref{eq:implicit_omega}), and a unique function $\widetilde{\omega}^{{\mathcal V}_\ell}(x)$  satisfying (\ref{eq:implciti_omegar}) 
for all $\ell$ large enough, in particular $(x^{(\ell)}, z^{(\ell)}) \in {\mathcal N}(x_\ast,z_\ast; \eta_1, \eta_2)$.
Note that the uniformity of the radii $\eta_1, \eta_2$ over all such $\ell$ is due to the uniform lower bound $\delta/2$ on 
the second derivatives $\sigma_{\omega\omega}(x,\ri \omega)$ and $\sigma^{{\mathcal V}_\ell}_{\omega\omega}(x,\ri \omega)$
in ${\mathcal N}(x_\ast, \Im z_\ast; \widetilde{\epsilon}_1, \widetilde{\epsilon}_2)$.  We also remark that 
$\widetilde{\omega}(x)$ and $\widetilde{\omega}^{{\mathcal V}_\ell}(x)$ are three
times differentiable, because $\sigma(x,\ri \omega)$ and $\sigma^{{\mathcal V}_\ell}(x,\ri \omega)$ are simple, hence
real analytic, in ${\mathcal N}(x_\ast, \Im z_\ast; \widetilde{\epsilon}_1, \widetilde{\epsilon}_2)$.

We complete the proof by arguing that $\ri \widetilde{\omega}(x)$ and $\ri \widetilde{\omega}^{{\mathcal V}_\ell}(x)$ 
are the unique first order optimal points
in a ball in the complex plane around $z_\ast$. To this end, $\sigma_\alpha(x_\ast,z_\ast) > 0$ by assumption, and $\sigma_\alpha(x,z) > 0$
in a neighborhood $\widetilde{\mathcal N}$ of $(x_\ast, z_\ast)$ due to continuous differentiability of $\sigma(x,z)$ around $(x_\ast, z_\ast)$ 
as implied by part (i) of Lemma \ref{thm:lbounds_sder}. Choose $\ell$ even larger if necessary so that $(x^{(\ell)}, z^{(\ell)})$ is in this 
neighborhood and $\sigma_\alpha^{{\mathcal V}_\ell}(x^{(\ell)}, z^{(\ell)}) = \sigma_\alpha(x^{(\ell)},z^{(\ell)}) > 0$, where
we employ part (iii) of Theorem \ref{thm:Hermite_interpolate} for the equality. By Lemma \ref{thm:lbounds_sder}
once again, in particular due to analyticity of $\sigma^{{\mathcal V}_\ell}(x,z)$ and a 
uniform upper bound on $\sigma_{\alpha\alpha}^{{\mathcal V}_\ell}(x,z)$ independent of $\ell$,
there exists a neighborhood $\widehat{\mathcal N} \subseteq \widetilde{\mathcal N}$ of $(x_\ast, z_\ast)$ such that
\[
	\sigma_\alpha(x,z) > 0	\quad	{\rm and}	\quad	\sigma^{{\mathcal V}_\ell}_\alpha(x,z) >	0
	\;\;\;\;		\forall (x,z) \in \widehat{\mathcal N}
\]
for all $\ell$ large enough. Now $\eta_1, \eta_2$ as in the previous paragraph can be chosen small enough if necessary
so that ${\mathcal B}(x_\ast,\eta_1) \times {\mathcal B}(z_\ast, \eta_2) \subseteq \widehat{\mathcal N}$.
Consequently, for each $x \in {\mathcal B}(x_\ast,\eta_1)$, since 
$\ri \widetilde{\omega}(x), \ri \widetilde{\omega}^{{\mathcal V}_\ell}(x) \in {\mathcal B}(z_\ast, \eta_2)$, we have
\[
	\sigma_\omega(x,\ri \widetilde{\omega}(x)) = 0,	\;\;	\sigma_\alpha(x,\ri \widetilde{\omega}(x)) > 0
	\quad {\rm and} \quad
	\sigma_\omega^{{\mathcal V}_\ell}(x,\ri \widetilde{\omega}^{{\mathcal V}_\ell}(x)) = 0, \;\;	
	\sigma_\alpha^{{\mathcal V}_\ell}(x,\ri \widetilde{\omega}^{{\mathcal V}_\ell}(x)) > 0,
\]
that is $\ri \widetilde{\omega}^{{\mathcal V}_\ell}(x)$ and $\ri \widetilde{\omega}^{{\mathcal V}_\ell}(x)$ satisfy the first order optimality 
conditions to be a minimizer of $\sigma(x,z)$ and $\sigma^{{\mathcal V}_\ell}(x,z)$ over $z \in {\mathbb C}^+$.
The uniqueness of $\ri \widetilde{\omega}(x)$, $\ri \widetilde{\omega}^{{\mathcal V}_\ell}(x)$
as first order optimal points inside ${\mathcal B}(z_\ast, \eta_2)$ follow from the uniqueness of
$\widetilde{\omega}(x), \widetilde{\omega}^{{\mathcal V}_\ell}(x) \in {\mathcal B}(\Im z_\ast, \eta_2)$
satisfying $\sigma_\omega(x,\ri \widetilde{\omega}(x)) = 0$, $\sigma_\omega^{{\mathcal V}_\ell}(x,\ri \widetilde{\omega}^{{\mathcal V}_\ell}(x)) = 0$,
as well as the fact that $\sigma_\alpha(x,z) \neq 0$, $\sigma_\alpha^{{\mathcal V}_\ell}(x,z) \neq 0$
for $z \in {\mathcal B}(z_\ast,\eta_2)$ such that $\Re z > 0$.
\end{proof}

The next result has two important conclusions. First it shows that the unique first order optimal points of the previous
lemma are indeed global minimizers where ${\mathcal D}(A(x))$ and ${\mathcal D}^{{\mathcal V}_\ell}(A(x))$ 
are attained. Secondly, it establishes that the smoothness assumption on ${\mathcal D}(A(x))$
at $x_\ast$ implies the existence of a ball centered at $x_\ast$ in which ${\mathcal D}(A(x))$,
as well as ${\mathcal D}^{{\mathcal V}_\ell}(A(x))$ for all large $\ell$, are smooth. In this result and elsewhere,
$\sigma_{-2}(\cdot)$ refers to the second smallest singular value of its matrix argument.
\begin{lemma}[Uniform Smoothness]\label{thm:extend_smoothness}
Suppose that the sequence $\{ x^{(\ell)} \}$ by Algorithm \ref{alg} or Algorithm \ref{alge} converges 
to a point $x_\ast$ that is strictly in the interior of $\: \widetilde{\Omega}$ and that ${\mathcal D}(A(x_\ast))$
is attained at a unique $z_\ast$, the singular value $\sigma( x_\ast, z_\ast ) > 0$
is simple, $\sigma_{\omega\omega}( x_\ast, z_\ast) > 0$ and $\sigma_\alpha (x_\ast, z_\ast) > 0$. 
Let $\eta_1$, $\widetilde{\omega}(x)$, $\widetilde{\omega}^{{\mathcal V}_\ell}(x)$ for all $\ell$ large enough, 
say $\ell \geq \ell_1$, be as in Lemma \ref{thm:local_rep}. 

Then there exist an $\eta \leq \eta_1$ and an $\varepsilon > 0$ satisfying the following:
\begin{enumerate}
	\item[\bf (i)] We have $\min \{ \sigma(x,\ri \widetilde{\omega}(x)),\: 
							\sigma_{-2}(x,\ri \widetilde{\omega}(x)) - \sigma(x,\ri \widetilde{\omega}(x))\} \geq \varepsilon$,
	and ${\mathcal D}(A(x))$ is uniquely attained at $\ri \widetilde{\omega}(x)$ for all $x \in {\mathcal B}(x_\ast,\eta)$.
	\item[\bf (ii)] Additionally, for all $\ell$ large enough, say $\ell \geq \ell_2 \geq \ell_1$ and for all $x \in {\mathcal B}(x_\ast,\eta)$,
	$
		\min \{ \sigma^{{\mathcal V}_\ell}(x,\ri \widetilde{\omega}^{{\mathcal V}_\ell}(x)), \:$ 
	$	\sigma_{-2}^{{\mathcal V}_\ell}(x,\ri \widetilde{\omega}^{{\mathcal V}_\ell}(x)) - 
		\sigma^{{\mathcal V}_\ell}(x,\ri \widetilde{\omega}^{{\mathcal V}_\ell}(x))\} 
		\geq \varepsilon
	$,
	and
	${\mathcal D}^{{\mathcal V}_\ell}(A(x))$
	is attained at $\ri \widetilde{\omega}^{{\mathcal V}_\ell}(x)$ 
	uniquely.
\end{enumerate}
\end{lemma}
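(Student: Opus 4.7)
The strategy is to use Lemma \ref{thm:local_rep} as the skeleton: it supplies the smooth implicit functions $\widetilde{\omega}(x)$ and $\widetilde{\omega}^{{\mathcal V}_\ell}(x)$ on ${\mathcal B}(x_\ast,\eta_1)$, together with the key uniqueness assertion that $\ri \widetilde{\omega}(x)$ (resp.\ $\ri \widetilde{\omega}^{{\mathcal V}_\ell}(x)$) is the only first-order stationary point of $\sigma(x,\cdot)$ (resp.\ $\sigma^{{\mathcal V}_\ell}(x,\cdot)$) on ${\mathbb C}^+$ that lies in ${\mathcal B}(z_\ast,\eta_2)$. With this in hand, each of (i) and (ii) reduces to two tasks: (a) establishing a uniform positive lower bound on both the smallest singular value and on its gap to the second smallest singular value, evaluated at $\ri\widetilde{\omega}(x)$ and $\ri\widetilde{\omega}^{{\mathcal V}_\ell}(x)$; and (b) ruling out any global minimizer lying outside ${\mathcal B}(z_\ast,\eta_2)$, so that every minimizer must coincide with the unique first-order optimal point inside that ball.

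For part (i), task (a) is a plain continuity argument: simplicity and positivity of $\sigma(x_\ast,z_\ast)$ give $\sigma_{-2}(x_\ast,z_\ast) - \sigma(x_\ast,z_\ast) > 0$ and $\sigma(x_\ast,z_\ast) > 0$, and joint continuity of the singular value functions together with continuity of $\widetilde{\omega}$ at $x_\ast$ yields $\eta \le \eta_1$ and $\varepsilon > 0$ for which both quantities stay at least $\varepsilon$ throughout ${\mathcal B}(x_\ast, \eta)$. For task (b), note that $A(x_\ast)$ is asymptotically stable (any right-half-plane eigenvalue would force $\sigma(x_\ast,z_\ast)=0$), and so is $A(x)$ on a neighborhood of $x_\ast$ by continuity of the spectrum, so the maximum modulus principle restricts the minimization to the imaginary axis. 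A spurious minimizer $\ri\omega_k$ along $x_k \to x_\ast$ with $|\ri\omega_k - z_\ast| > \eta_2$ is forced to be bounded by $\sigma(x,\ri\omega) \ge |\omega| - \|A(x)\|$; extracting a convergent subsequence and invoking continuity produces $\sigma(x_\ast, \ri\omega_\ast) \le \sigma(x_\ast, z_\ast)$, contradicting the uniqueness of $z_\ast$.

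For part (ii), the lower bound in task (a) exploits monotonicity at the level of every singular value: Cauchy's interlacing applied to the compressed Gramian $V_\ell^\ast (A(x)-zI)^\ast (A(x)-zI) V_\ell$ yields $\sigma_{-k}^{{\mathcal V}_\ell}(x,z) \ge \sigma_{-k}(x,z)$, so $\sigma_{-2}^{{\mathcal V}_\ell}(x, \ri\widetilde{\omega}^{{\mathcal V}_\ell}(x)) \ge \sigma_{-2}(x, \ri\widetilde{\omega}^{{\mathcal V}_\ell}(x))$, which stays close to $\sigma_{-2}(x_\ast, z_\ast)$ by continuity of $\sigma_{-2}$ on the compact set $\overline{{\mathcal B}(x_\ast, \eta_1)} \times \overline{{\mathcal B}(\Im z_\ast, \eta_2)}$. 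For the upper bound on $\sigma^{{\mathcal V}_\ell}(x, \ri\widetilde{\omega}^{{\mathcal V}_\ell}(x))$, Hermite interpolation anchors the function at $x^{(\ell)}$ via $\widetilde{\omega}^{{\mathcal V}_\ell}(x^{(\ell)}) = \Im z^{(\ell)}$ and $\sigma^{{\mathcal V}_\ell}(x^{(\ell)}, z^{(\ell)}) = \sigma(x^{(\ell)}, z^{(\ell)}) \to \sigma(x_\ast, z_\ast)$; combined with the uniform-in-$\ell$ Lipschitz continuity of $x \mapsto \sigma^{{\mathcal V}_\ell}(x, \ri\widetilde{\omega}^{{\mathcal V}_\ell}(x))$, whose derivative equals $\sigma_x^{{\mathcal V}_\ell}(x,\ri\widetilde{\omega}^{{\mathcal V}_\ell}(x))$ after the $\sigma_\omega^{{\mathcal V}_\ell}=0$ cancellation in the chain rule and is bounded independently of $\ell$ by Lemma \ref{thm:lbounds_sder}(i), this pins $\sigma^{{\mathcal V}_\ell}(x, \ri\widetilde{\omega}^{{\mathcal V}_\ell}(x))$ strictly below $\sigma_{-2}(x_\ast,z_\ast)$ for $x \in {\mathcal B}(x_\ast,\eta)$ and all sufficiently large $\ell$, producing a common positive $\varepsilon$ after shrinking $\eta$.

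Task (b) for part (ii) is the analogous contradiction argument, but without the imaginary-axis restriction: a putative minimizer $\widetilde z_k \in {\mathbb C}^+ \setminus {\mathcal B}(z_\ast, \eta_2)$ of $\sigma^{{\mathcal V}_{\ell_k}}(x_k, \cdot)$ is confined to a compact set by monotonicity combined with $\sigma^{{\mathcal V}_\ell}(x, z) \ge \sigma(x,z) \ge |z| - \|A(x)\|$, and passing to a convergent subsequence in the chain $\sigma(x_k, \widetilde z_k) \le \sigma^{{\mathcal V}_{\ell_k}}(x_k, \widetilde z_k) \le \sigma^{{\mathcal V}_{\ell_k}}(x_k, \ri\widetilde{\omega}^{{\mathcal V}_{\ell_k}}(x_k)) \to \sigma(x_\ast, z_\ast)$ forces the limit to equal $z_\ast$, contradicting $|\widetilde z_k - z_\ast| > \eta_2$. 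The main obstacle throughout is threading uniformity in $\ell$: this is precisely what makes Lemma \ref{thm:sval_sder_ineq} and the uniform bounds of Lemma \ref{thm:lbounds_sder} indispensable, as they lift the pointwise equalities furnished by Hermite interpolation at the single iterate $x^{(\ell)}$ into estimates valid on a full neighborhood of $x_\ast$ independently of the growing subspace dimension.
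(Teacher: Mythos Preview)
Your argument is correct and covers the same ground as the paper's proof, but the execution differs in two notable respects.

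For task (b)---showing that the global minimizer must lie inside ${\mathcal B}(z_\ast,\eta_2)$---the paper proceeds \emph{quantitatively}: it introduces the gap functions
\[
\delta_1(x)=\min_{z\in{\mathcal B}(z_\ast,\eta_2)}\sigma(x,z),\qquad
\delta_2(x)=\inf_{z\in{\mathbb C}^+\setminus{\mathcal B}(z_\ast,\eta_2)}\sigma(x,z),
\]
together with their reduced analogues $\delta_j^{{\mathcal V}_\ell}$, and uses the uniform Lipschitz bound of Lemma~\ref{thm:Lip_cont}(ii) to show that $\delta_2(x)-\delta_1(x)\ge 3\delta_\ast/4$ on the explicit ball of radius $\eta=\min\{\delta_\ast/(8\zeta),\eta_1\}$. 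For part~(ii) the same Lipschitz estimate, combined with monotonicity $\delta_2^{{\mathcal V}_\ell}\ge\delta_2$ and the interpolation identity $\delta_1^{{\mathcal V}_\ell}(x^{(\ell)})=\delta_1(x^{(\ell)})$, yields $\delta^{{\mathcal V}_\ell}(x)\ge\delta_\ast/4$. Your compactness/contradiction route reaches the same conclusion non-constructively; it is perfectly valid, but note that for it to go through cleanly (forcing the subsequential limit of $\widetilde z_k$ to equal $z_\ast$) you must set up the negation so that $x_k\to x_\ast$, i.e., quantify over \emph{all} small $\eta$ rather than a single fixed one. The paper's approach has the minor advantage of producing an explicit radius.

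For task (a) in part~(ii)---the uniform singular-value gap---the paper simply invokes \cite[Lemma~2.8]{Kangal2015} to obtain a neighborhood on which $\min\{\sigma^{{\mathcal V}_\ell},\,\sigma_{-2}^{{\mathcal V}_\ell}-\sigma^{{\mathcal V}_\ell}\}\ge\varepsilon$ directly. Your derivation via Cauchy interlacing ($\sigma_{-2}^{{\mathcal V}_\ell}\ge\sigma_{-2}$) together with the upper bound on $\sigma^{{\mathcal V}_\ell}(x,\ri\widetilde\omega^{{\mathcal V}_\ell}(x))$ through Hermite interpolation at $x^{(\ell)}$ and uniform Lipschitz continuity is more self-contained and makes the mechanism transparent; it is essentially what lies behind the cited lemma.
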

\begin{proof}
Following the arguments in \cite[Lemma 2.8]{Kangal2015}, due to the simplicity and positivity of $\sigma(x_\ast, z_\ast)$
and the interpolation properties $\sigma(x^{(\ell)}, z^{(\ell)}) = \sigma^{{\mathcal V}_\ell} (x^{(\ell)}, z^{(\ell)})$,
there exists an $\varepsilon$ and a neighborhood ${\mathcal N}(x_\ast, \Im z_\ast; \widetilde{\eta}_1, \widetilde{\eta}_2)$
of $(x_\ast, \Im z_\ast)$ such that
\begin{equation}\label{eq:uniform_gap}
 	\min \{ \sigma(x,\ri \omega),\: \sigma_{-2}(x,\ri \omega) - \sigma(x,\ri \omega)\} \geq \varepsilon
	\quad	\forall (x,\omega) \in {\mathcal N}(x_\ast, \Im z_\ast; \widetilde{\eta}_1, \widetilde{\eta}_2),
\end{equation}
and, for all $\ell$ large enough,
\begin{equation}\label{eq:uniform_gapr}
	\min \{ \sigma^{{\mathcal V}_\ell}(x,\ri \omega), \: \sigma_{-2}^{{\mathcal V}_\ell}(x,\ri \omega) 
											- 	\sigma^{{\mathcal V}_\ell}(x,\ri \omega)\} \geq \varepsilon 
	\quad	\forall (x,\omega) \in {\mathcal N}(x_\ast, \Im z_\ast; \widetilde{\eta}_1, \widetilde{\eta}_2).
\end{equation}
Furthermore, let $\widetilde{\omega}(x)$ and $\widetilde{\omega}^{{\mathcal V}_\ell}(x)$ for $\ell$ large enough, say 
$\ell \geq \ell_1$,  be as in Lemma \ref{thm:local_rep} satisfying its assertions (i) and (ii) for some 
$\eta_1 \leq \widetilde{\eta}_1$, $\eta_2 \leq \widetilde{\eta}_2$.

First we prove that ${\mathcal D}(A(x))$ is minimized uniquely at $\ri \widetilde{\omega}(x)$
for $x$ in a ball centered at $x_\ast$. To this end, for each $x \in {\mathcal B}(x_\ast,\eta_1)$, 
we define the functions
\begin{equation*}
	\begin{split}
	\delta_2(x) & \;\; := \;\; \inf \left\{ \sigma(x,z) \; | \; z \in {\mathbb C}^+ \backslash {\mathcal B}(z_\ast, \eta_2) \right\},
	\quad
	\delta_1(x) \;\; := \;\; \min \left\{ \sigma(x,z) \; | \;  z \in {\mathcal B}(z_\ast, \eta_2) \right\},	\\
	\delta(x) \;\:	 & \;\; := \;\;
				\delta_2(x)
							\; - \;
				\delta_1(x).	
	\end{split}
\end{equation*}
In particular, let 
$\;
	\delta_\ast  :=  \delta(x_\ast) 
			 :=   \delta_2(x_\ast)
						 -	\delta_1(x_\ast)
			 = 
					\delta_2(x_\ast)
						 -	{\mathcal D}(A(x_\ast)) \; > \; 0.
$
Consider the ball ${\mathcal B}(x_\ast,\delta_\ast/8\zeta)$, where the constant $\zeta$ is as in Lemma \ref{thm:Lip_cont}.
For each $x \in {\mathcal B}(x_\ast,\eta)$ with $\eta := \min \{ \delta_\ast/ (8 \zeta) , \eta_1 \}$, by part (ii) of Lemma \ref{thm:Lip_cont},
\[
	\delta_2(x_\ast) - \delta_2(x)  \leq \delta_\ast/8
	\quad	{\rm and}	\quad
	\delta_1(x) - \delta_1(x_\ast)  \leq \delta_\ast/8,
\]
implying
\begin{equation}\label{eq:gap_minimizers}
	\delta(x) = \delta_2(x) - \delta_1(x) \geq \delta_2(x_\ast) - \delta_1(x_\ast) - \delta_\ast/4 = 3 \delta_\ast /4.
\end{equation}
This means that $\sigma(x,z)$ is minimized over $z \in {\mathbb C}^+$ globally by some point in the interior of 
${\mathcal B}(z_\ast,\eta_2)$. This has to be $\ri \widetilde{\omega}(x)$, because 
Lemma \ref{thm:local_rep} asserts that all other points in ${\mathcal B}(z_\ast,\eta_2)$
violate the first order optimality conditions to be a minimizer of $\sigma(x,z)$ over $z \in {\mathbb C}^+$.
Note also that, since $\eta \leq \eta_1 \leq \widetilde{\eta}_1$ as well as
$\widetilde{\omega}(x) \in {\mathcal B}(\Im z_\ast,\eta_2)$ with $\eta_2 \leq \widetilde{\eta}_2$, 
it follows from (\ref{eq:uniform_gap}) that
\[
	\min \{ \sigma(x,\ri \widetilde{\omega}(x)),\: \sigma_{-2}(x,\ri \widetilde{\omega}(x)) - \sigma(x,\ri \widetilde{\omega}(x))\} \geq \varepsilon
	\quad	\forall x \in {\mathcal B}(x_\ast, \eta).
\]
This completes the proof of assertion (i).

Now there exists $\ell_2 \geq \ell_1$ such that $x^{(\ell)} \in {\mathcal B}(x_\ast, \eta)$ and
$z^{(\ell)} \in {\mathcal B} (z_\ast, \eta_2)$ for all $\ell \geq \ell_2$. 
We show the satisfaction of assertion (ii) for all $\ell \geq \ell_2$. For such an $\ell$ and for an $x \in {\mathcal B}(x_\ast,\eta)$,  define
\begin{equation*}
	\begin{split}
	\delta^{{\mathcal V}_\ell}_2(x) & \;\; := \;\; \inf \left\{ \sigma^{{\mathcal V}_\ell}(x,z) \; | \; z \in {\mathbb C}^+ \backslash {\mathcal B}(z_\ast, \eta_2) \right\} \\
	\delta^{{\mathcal V}_\ell}_1(x) & \;\; := \;\; \min \left\{ \sigma^{{\mathcal V}_\ell}(x,z) \; | \;  z \in {\mathcal B}(z_\ast, \eta_2) \right\},	\quad
	\delta^{{\mathcal V}_\ell}(x) \:	  \; := \;
				\delta^{{\mathcal V}_\ell}_2(x)
							\; - \;
				\delta^{{\mathcal V}_\ell}_1(x).	
	\end{split}
\end{equation*}
The monotonicity property $\sigma^{{\mathcal V}_\ell}(x,z) \geq \sigma(x,z)$ for all $z$
implies $\delta^{{\mathcal V}_\ell}_j(x) \geq \delta_j(x)$ for $j =1,2$. Additionally,
$\delta^{{\mathcal V}_\ell}_1(x^{(\ell)}) \leq \sigma^{{\mathcal V}_\ell}(x^{(\ell)},z^{(\ell)}) = \sigma(x^{(\ell)},z^{(\ell)}) = \delta_1(x^{(\ell)})$
implies $\delta^{{\mathcal V}_\ell}_1(x^{(\ell)}) = \delta_1 (x^{(\ell)})$. These observations lead us to
\begin{equation}\label{eq:intermediate_gap}
	\delta^{{\mathcal V}_\ell}(x^{(\ell)})    \;\; = \;\; 
					\delta^{{\mathcal V}_\ell}_2(x^{(\ell)})
							\; - \;
					\delta^{{\mathcal V}_\ell}_1(x^{(\ell)})	
							  \;\; \geq \;\;
					\delta_2(x^{(\ell)})
							\; - \;
					\delta_1(x^{(\ell)}) \; \geq \; 3\delta_\ast /4,
\end{equation}
where the last bound is due to (\ref{eq:gap_minimizers}). Now for each $x \in {\mathcal B}(x_\ast,\eta)$,
by part (ii) of Lemma \ref{thm:Lip_cont}, we have
\begin{equation*}
	\begin{split}
		\delta_2^{{\mathcal V}_\ell}(x^{(\ell)}) - \delta_2^{{\mathcal V}_\ell}(x) & \leq \zeta \| x^{(\ell)} - x \|_2 
								\leq \zeta (\| x^{(\ell)} - x_\ast \|_2 + \| x_\ast - x \|_2)
											\leq	\delta_\ast/4
		\quad	{\rm and}		\\
		\delta_1^{{\mathcal V}_\ell}(x) - \delta_1^{{\mathcal V}_\ell}(x^{(\ell)}) & \leq  \zeta \| x  -  x^{(\ell)} \|_2 
								\leq \zeta (\| x - x_\ast \|_2 + \| x_\ast - x^{(\ell)} \|_2)
											\leq \delta_\ast/4,
	\end{split}
\end{equation*}
that gives rise to
\[
	\delta^{{\mathcal V}_\ell}(x) \;\; = \;\; \delta_2^{{\mathcal V}_\ell}(x) - \delta_1^{{\mathcal V}_\ell}(x)
				\;\; \geq \;\; \delta_2^{{\mathcal V}_\ell}(x^{(\ell)}) - \delta_1^{{\mathcal V}_\ell}(x^{(\ell)}) - \delta_\ast/2 
				\;\; \geq \;\; \delta_\ast /4,
\]
where the last lower bound follows from (\ref{eq:intermediate_gap}). 
Hence, the global minimizer of $\sigma^{{\mathcal V}_\ell}(x,z)$ over $z \in {\mathbb C}^+$ also lies
strictly in the interior of ${\mathcal B}(z_\ast,\eta_2)$. Once again this global minimizer has to be 
$\ri \widetilde{\omega}^{{\mathcal V}_\ell}(x)$,
as the other points in ${\mathcal B}(z_\ast, \eta_2)$ violate the first order conditions to be a minimizer of 
$\sigma^{{\mathcal V}_\ell}(x,z)$ over $z \in {\mathbb C}^+$
due to Lemma \ref{thm:local_rep}. Finally, since $\eta \leq \widetilde{\eta}_1$ and
$\widetilde{\omega}^{{\mathcal V}_\ell}(x) \in {\mathcal B}(\Im z_\ast,\eta_2)$ with $\eta_2 \leq \widetilde{\eta}_2$, 
we deduce the following uniform gap from (\ref{eq:uniform_gapr}):
\[
 	\min \{ \sigma^{{\mathcal V}_\ell}(x,\ri \widetilde{\omega}^{{\mathcal V}_\ell}(x)), \: 
	\sigma_{-2}^{{\mathcal V}_\ell}(x,\ri \widetilde{\omega}^{{\mathcal V}_\ell}(x)) - 
	\sigma^{{\mathcal V}_\ell}(x,\ri \widetilde{\omega}^{{\mathcal V}_\ell}(x))\} \geq \varepsilon 
	\quad	\forall x \in {\mathcal B}(x_\ast, \eta),
\]
completing the proof of assertion (ii).
\end{proof}

Now we state two results regarding the second and third derivatives of ${\mathcal D}(A(x))$
and ${\mathcal D}^{{\mathcal V}_\ell}(A(x))$ for $x$ inside the ball 
${\mathcal B}(x_\ast, \eta)$ specified by Lemma \ref{thm:extend_smoothness}. Note that there 
are no interpolation properties between the second and higher order derivatives of these two 
distance to instability functions.

\begin{lemma}[Uniform Boundedness of the Third Derivatives]\label{thm:bound_thirdder}
Suppose that $\{ x^{(\ell)} \}$ by Algorithm \ref{alg} or Algorithm \ref{alge} converges 
to a point $x_\ast$ that satisfies the assumptions in Lemma \ref{thm:extend_smoothness}.
There exist $\eta \in {\mathbb R}^+$ and $\xi \in {\mathbb R}^+$ such that, for all 
$\ell$ large enough, we have
\[
	\left| \frac{ \partial^3 \left[ {\mathcal D}^{{\mathcal V}_\ell} (A(\widetilde{x})) \right] }{ \partial x_p \partial x_q \partial x_s } \right|
		\;	\leq	\;	\xi	\quad	\forall	\widetilde{x} \in {\mathcal B}(x_\ast,\eta)
\]
for $p,q,s = 1,\dots, d$.
\end{lemma}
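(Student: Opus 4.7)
The plan is to use the uniform smoothness guaranteed by Lemma \ref{thm:extend_smoothness} to rewrite $\mathcal{D}^{\mathcal{V}_\ell}(A(x))$ locally around $x_\ast$ as a composition of smooth functions whose derivatives admit $\ell$-independent bounds. Concretely, for $\ell \geq \ell_2$ and $x \in \mathcal{B}(x_\ast,\eta)$, I would invoke part (ii) of Lemma \ref{thm:extend_smoothness} to write
\[
	\mathcal{D}^{\mathcal{V}_\ell}(A(x)) \;\; = \;\; \sigma^{\mathcal{V}_\ell}(x,\ri\widetilde{\omega}^{\mathcal{V}_\ell}(x)),
\]
where $\widetilde{\omega}^{\mathcal{V}_\ell}(x)$ is the smooth implicit function from Lemma \ref{thm:local_rep}. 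Expanding $\partial^3 \mathcal{D}^{\mathcal{V}_\ell}(A(x))/\partial x_p \partial x_q \partial x_s$ by the chain rule produces a finite sum of products, each of which is a partial derivative of $\sigma^{\mathcal{V}_\ell}(x,\ri\omega)$ of order at most three (evaluated at $\omega = \widetilde{\omega}^{\mathcal{V}_\ell}(x)$) multiplied by partial derivatives of $\widetilde{\omega}^{\mathcal{V}_\ell}(x)$ of order at most three. It therefore suffices to bound each of these two families of derivatives uniformly in $\ell$.

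For the first family, I would exploit the simplicity gap of at least $\varepsilon$ between $\sigma^{\mathcal{V}_\ell}(x,\ri\widetilde{\omega}^{\mathcal{V}_\ell}(x))$ and $\sigma_{-2}^{\mathcal{V}_\ell}(x,\ri\widetilde{\omega}^{\mathcal{V}_\ell}(x))$ established in Lemma \ref{thm:extend_smoothness}. By continuity this gap remains at least, say, $\varepsilon/2$ on a possibly smaller $\mathcal{B}(x_\ast,\eta) \times \mathcal{B}(\Im z_\ast,\eta_2)$ independent of $\ell$. The analytic perturbation formulas for simple singular values (see \cite{Lancaster1964, Bunse-Gerstner1991}) express the first three derivatives of $\sigma^{\mathcal{V}_\ell}(x,\ri\omega)$ as sums of terms built from derivatives of $A^{V_\ell}(x) - \ri\omega V_\ell$ together with resolvent-type contributions that are controlled by inverse powers of this spectral gap. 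Since $\|A^{V_\ell}(x)\| \leq \sum_j |f_j(x)|\,\|A_j\|$ and $\|\partial^\alpha A^{V_\ell}(x)\| \leq \sum_j |\partial^\alpha f_j(x)|\,\|A_j\|$ hold uniformly for $x$ in a compact neighborhood of $x_\ast$, independently of $\ell$, the first three derivatives of $\sigma^{\mathcal{V}_\ell}(x,\ri\omega)$ are uniformly bounded on $\mathcal{B}(x_\ast,\eta) \times \mathcal{B}(\Im z_\ast,\eta_2)$ by constants independent of $\ell$.

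For the second family, I would apply the implicit function theorem to the defining equation $\sigma^{\mathcal{V}_\ell}_\omega(x,\ri\widetilde{\omega}^{\mathcal{V}_\ell}(x)) = 0$. Differentiating successively in $x$, each partial derivative of $\widetilde{\omega}^{\mathcal{V}_\ell}(x)$ up to order three can be expressed as a rational function whose numerator is a polynomial in the first three $x$ and $\omega$ partial derivatives of $\sigma^{\mathcal{V}_\ell}$ (already bounded above by the previous step) and whose denominator is a positive power of $\sigma^{\mathcal{V}_\ell}_{\omega\omega}(x,\ri\widetilde{\omega}^{\mathcal{V}_\ell}(x))$. Part (ii) of Lemma \ref{thm:local_rep} supplies the uniform lower bound $\sigma^{\mathcal{V}_\ell}_{\omega\omega}(x,\ri\widetilde{\omega}^{\mathcal{V}_\ell}(x)) \geq \delta/2$, so the denominators are uniformly bounded away from zero. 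This yields the desired $\ell$-uniform bound on the derivatives of $\widetilde{\omega}^{\mathcal{V}_\ell}(x)$, and combining the two families through the chain rule delivers the constant $\xi$.

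The main obstacle is verifying that every constant produced along the way is genuinely independent of $\ell$. The only $\ell$-dependent quantities entering the estimates are norms of the form $\|(A^{V_\ell}(x))^{(\alpha)}\|$, inverses of the spectral gap $\sigma_{-2}^{\mathcal{V}_\ell} - \sigma^{\mathcal{V}_\ell}$, and inverses of $\sigma^{\mathcal{V}_\ell}_{\omega\omega}$; the first is tamed because restricting by an isometry does not increase operator norms, and the latter two are precisely what Lemmas \ref{thm:extend_smoothness} and \ref{thm:local_rep} were designed to control uniformly. The bookkeeping of the chain rule into third order is tedious but routine once these three uniform estimates are in place.
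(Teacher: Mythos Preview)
Your proposal is correct and follows essentially the same route as the paper: write $\mathcal{D}^{\mathcal{V}_\ell}(A(x)) = \sigma^{\mathcal{V}_\ell}(x,\ri\widetilde{\omega}^{\mathcal{V}_\ell}(x))$ via Lemma~\ref{thm:extend_smoothness}, then bound the chain-rule expansion using the $\ell$-uniform spectral gap for the singular-value derivatives and the $\ell$-uniform lower bound on $\sigma^{\mathcal{V}_\ell}_{\omega\omega}$ for the implicit-function derivatives. One small refinement the paper exploits is that $\sigma^{\mathcal{V}_\ell}_\omega$ vanishes along $\omega = \widetilde{\omega}^{\mathcal{V}_\ell}(x)$, so the third derivative of the composition only involves derivatives of $\widetilde{\omega}^{\mathcal{V}_\ell}$ up to order two (not three as you state), which in turn keeps you from needing fourth-order derivatives of $\sigma^{\mathcal{V}_\ell}$ when differentiating the implicit equation.
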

\begin{proof}
By Lemma \ref{thm:extend_smoothness} there exists an $\eta$ such that, for all $\ell$ large enough
and for all $x \in {\mathcal B}(x_\ast,\eta)$,
${\mathcal D}^{{\mathcal V}_\ell}(A(x)) = \sigma(x, \ri \omega^{{\mathcal V}_\ell}(x))$ for some 
real-valued three times differentiable function $\omega^{{\mathcal V}_\ell}(x)$, and the singular
value $\sigma(x, \ri \omega^{{\mathcal V}_\ell}(x))$ is simple, bounded away from zero.
Consequently, ${\mathcal D}^{{\mathcal V}_\ell}(A(x))$ is three times differentiable on ${\mathcal B}(x_\ast,\eta)$.
Each third derivative of ${\mathcal D}^{{\mathcal V}_\ell}(A(x))$ at a given $\widetilde{x} \in {\mathcal B}(x_\ast,\eta)$ 
can be expressed in terms of the first three derivatives of $\sigma^{{\mathcal V}_\ell}(x,z)$ 
at $(x,z) = (\widetilde{x}, \ri \omega^{{\mathcal V}_\ell}(\widetilde{x}))$ and the first two derivatives of 
$\omega^{{\mathcal V}_\ell}(x)$ at $x = \widetilde{x}$, all of which can be bounded solely in terms of $\| A_j \|$
and $\max_{x \in {\mathcal B}(x_\ast, \eta)} \: | f_j(x) |$ for $j = 1,\dots,\kappa$, as well as the reciprocal of the 
uniform gap $\sigma^{{\mathcal V}_\ell}_{-2}(x,\ri \omega^{{\mathcal V}_\ell}(x)) - 
				\sigma^{{\mathcal V}_\ell}(x,\ri \omega^{{\mathcal V}_\ell}(x)) \geq \varepsilon > 0$ 
established in Lemma \ref{thm:extend_smoothness} for all $x \in {\mathcal B}(x_\ast,\eta)$.
\end{proof}

\begin{lemma}[Accuracy of the Second Derivatives]\label{thm:accuracy_secdev}
Suppose that $\{ x^{(\ell)} \}$ by Algorithm \ref{alg} when $d=1$ or Algorithm \ref{alge} converges 
to a point $x_\ast$ that satisfies the assumptions in Lemma \ref{thm:extend_smoothness}.
Then, for all $\ell$ large enough, we have
\[
	\left\| \nabla^2 {\mathcal D} (A(x^{(\ell)}))  -  \nabla^2 {\mathcal D}^{{\mathcal V}_\ell} (A(x^{(\ell)}))	\right\|_2	
			=	O(h^{(\ell)}),
\]
where we define $h^{(\ell)} := | x^{(\ell)} - x^{(\ell-1)} |$ for Algorithm \ref{alg} when $d=1$.
Furthermore, if $\nabla^2 {\mathcal D} (A(x_\ast))$ is invertible, then for all $\ell$ large enough
\[
	\left\| \left[ \nabla^2 {\mathcal D} (A(x^{(\ell)})) \right]^{-1}  -  \left[ \nabla^2 {\mathcal D}^{{\mathcal V}_\ell} (A(x^{(\ell)})) \right]^{-1}	\right\|_2	
			=	O(h^{(\ell)}).
\]
\end{lemma}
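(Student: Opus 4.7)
The plan is to introduce the error function $g_\ell(x) := {\mathcal D}(A(x)) - {\mathcal D}^{{\mathcal V}_\ell}(A(x))$ on the ball ${\mathcal B}(x_\ast,\eta)$ where, by Lemma~\ref{thm:extend_smoothness}, both ${\mathcal D}(A(x))$ and ${\mathcal D}^{{\mathcal V}_\ell}(A(x))$ are three times continuously differentiable. By Lemma~\ref{thm:bound_thirdder} (applied to the reduced problem, and to the full problem via the special case ${\mathcal V}_\ell = {\mathbb C}^n$), the third derivatives of $g_\ell$ are bounded above uniformly in $\ell$ throughout this ball. The key observation is that Hermite interpolation forces $\nabla g_\ell$ to vanish at more than one nearby point, so Taylor-expanding around $x^{(\ell)}$ extracts the Hessian error.

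For Algorithm~\ref{alg} with $d = 1$, Theorem~\ref{thm:Hermite_interpolate2} gives $g_\ell'(x^{(\ell)}) = 0$, and since $v_\ell \in {\mathcal V}_\ell$, also $g_\ell'(x^{(\ell-1)}) = 0$. I would Taylor-expand $g_\ell'$ at $x^{(\ell-1)}$ about $x^{(\ell)}$ to second order,
\[
0 \;=\; g_\ell'(x^{(\ell-1)}) \;=\; g_\ell''(x^{(\ell)})(x^{(\ell-1)} - x^{(\ell)}) + \frac{1}{2} g_\ell'''(\xi)(x^{(\ell-1)} - x^{(\ell)})^2,
\]
and divide through by $(x^{(\ell-1)} - x^{(\ell)})$ to obtain $|g_\ell''(x^{(\ell)})| = \frac{1}{2} |g_\ell'''(\xi)| \cdot h^{(\ell)} = O(h^{(\ell)})$. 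For Algorithm~\ref{alge} with general $d$, Theorem~\ref{thm:Hermite_interpolate_ext} gives $\nabla g_\ell(x^{(\ell)}) = 0$ together with $\nabla g_\ell(x^{(\ell)} + h^{(\ell)} e_p) = 0$ for $p = 1,\dots,d$ (taking $e_{p,p} = e_p$). A first-order vector Taylor expansion
\[
0 \;=\; \nabla g_\ell(x^{(\ell)} + h^{(\ell)} e_p) \;=\; h^{(\ell)} \nabla^2 g_\ell(x^{(\ell)}) e_p + \frac{(h^{(\ell)})^2}{2} R_p,
\]
with $\|R_p\|_2$ uniformly bounded by the third-derivative bound, yields $\nabla^2 g_\ell(x^{(\ell)}) e_p = O(h^{(\ell)})$; hence every column of $\nabla^2 g_\ell(x^{(\ell)})$ has norm $O(h^{(\ell)})$, and since $d$ is fixed and finite, $\|\nabla^2 g_\ell(x^{(\ell)})\|_2 = O(h^{(\ell)})$.

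For the inverse claim, the invertibility of $\nabla^2 {\mathcal D}(A(x_\ast))$ combined with the continuity of $\nabla^2 {\mathcal D}(A(\cdot))$ on ${\mathcal B}(x_\ast,\eta)$ (guaranteed by Lemma~\ref{thm:extend_smoothness}) makes $\nabla^2 {\mathcal D}(A(x^{(\ell)}))$ invertible with uniformly bounded inverse for all large $\ell$; combined with the $O(h^{(\ell)})$ bound already established on the Hessian difference, $\nabla^2 {\mathcal D}^{{\mathcal V}_\ell}(A(x^{(\ell)}))$ is likewise invertible with uniformly bounded inverse for all large $\ell$. I would then apply the standard identity $B^{-1} - C^{-1} = B^{-1}(C - B) C^{-1}$ to the two Hessians to transfer the $O(h^{(\ell)})$ rate to their inverses.

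The main obstacle I anticipate is not the Taylor-expansion algebra itself but ensuring the remainder terms are controlled uniformly in $\ell$. This rests on having third-derivative bounds on both ${\mathcal D}^{{\mathcal V}_\ell}$ and ${\mathcal D}$ by a constant independent of $\ell$ on a fixed ball containing $x^{(\ell)}$ together with all probing points $x^{(\ell)} + h^{(\ell)} e_p$, which follows from Lemma~\ref{thm:bound_thirdder} once one notes that $x^{(\ell)} \to x_\ast$ lies strictly interior to $\widetilde{\Omega}$ and $h^{(\ell)} \to 0$, so for all large $\ell$ these probing points lie inside ${\mathcal B}(x_\ast,\eta)$.
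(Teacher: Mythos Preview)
Your proposal is correct and follows essentially the same strategy as the paper: set up the error function $g_\ell$, invoke the uniform third-derivative bound from Lemma~\ref{thm:bound_thirdder}, and Taylor-expand using the Hermite interpolation identities to extract the Hessian error. The inverse claim is handled identically.

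There is, however, a small but noteworthy difference in \emph{which} interpolation data you feed into the Taylor expansion. For Algorithm~\ref{alg} with $d=1$, the paper uses $g_\ell(x^{(\ell)}) = g_\ell(x^{(\ell-1)}) = 0$ together with $g_\ell'(x^{(\ell)}) = 0$, and Taylor-expands $g_\ell$ to third order. You instead use $g_\ell'(x^{(\ell)}) = g_\ell'(x^{(\ell-1)}) = 0$ (the derivative interpolation at $x^{(\ell-1)}$ is indeed available by Theorem~\ref{thm:Hermite_interpolate2}, since $v_{\ell-1} \in {\mathcal V}_\ell$) and Taylor-expand $g_\ell'$ to second order. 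For Algorithm~\ref{alge}, the paper uses only the function-value interpolation $g_\ell(x^{(\ell)} + h^{(\ell)} e_{p,q}) = 0$ at \emph{all} directions $e_{p,q}$, recovering the off-diagonal Hessian entries from the quadratic forms $e_{p,q}^T \nabla^2 g_\ell(x^{(\ell)}) e_{p,q}$; you instead use the gradient interpolation $\nabla g_\ell(x^{(\ell)} + h^{(\ell)} e_p) = 0$ (guaranteed by part (ii) of Theorem~\ref{thm:Hermite_interpolate_ext}) at only the \emph{diagonal} directions $e_p$, which directly yields each column of $\nabla^2 g_\ell(x^{(\ell)})$. Your route is slightly more economical: it needs one fewer order in the Taylor expansion and, for Algorithm~\ref{alge}, does not require the off-diagonal probing points $e_{p,q}$ with $p\neq q$ at all. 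Both arguments are valid and rest on the same uniform remainder control.
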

\begin{proof}
By Lemma \ref{thm:extend_smoothness} there exists an $\eta$ such that both ${\mathcal D}(A(x))$
as well as ${\mathcal D}^{{\mathcal V}_\ell}(A(x))$ for $\ell$ large enough are three times continuously differentiable
for all $x \in {\mathcal B}(x_\ast,\eta)$. Furthermore, we can choose $\ell$ even larger if necessary so 
that ${\mathcal B}(x^{(\ell)},h^{(\ell)}) \subseteq {\mathcal B}(x_\ast,\eta)$.

Now, for Algorithm \ref{alge} exploiting the properties
\begin{equation*}
	\begin{split}
	{\mathcal D}(A(x^{(\ell)}))		 =		{\mathcal D}^{{\mathcal V}_\ell} (A(x^{(\ell)})),
		\;\;\;\;
	\nabla {\mathcal D}(A(x^{(\ell)} ))		=		\nabla {\mathcal D}^{{\mathcal V}_\ell} (A(x^{(\ell)} ))  \quad	{\rm and} 
	\hskip 10ex \\
	{\mathcal D}(A(x^{(\ell)} + h^{(\ell)} e_{p,q}))		 =		{\mathcal D}^{{\mathcal V}_\ell} (A(x^{(\ell)} + h^{(\ell)} e_{p,q}))	
	\hskip 28.5ex
	\end{split}
\end{equation*}
for $p = 1,\dots,d$, $q = p, \dots, d$, and for Algorithm \ref{alg} when $d=1$ exploiting
\begin{equation*}
	\begin{split}
	{\mathcal D}(A(x^{(\ell)}))		=		{\mathcal D}^{{\mathcal V}_\ell} (A(x^{(\ell)} )),
		\;\;\;\;
	{\mathcal D}(A(x^{(\ell-1)}))		=		{\mathcal D}^{{\mathcal V}_\ell} (A(x^{(\ell-1)} ))
		\;\;\;\; 	{\rm and}	\hskip 8ex \\
	{\mathcal D}'(A(x^{(\ell)} ))		=		\left[ {\mathcal D}^{{\mathcal V}_\ell} \right]' (A(x^{(\ell)} )),	\hskip 43.5ex
	\end{split}
\end{equation*}
as well as the uniform boundedness of the third derivatives of ${\mathcal D}^{{\mathcal V}_\ell}(A(x))$
independent of $\ell$ (see Lemma \ref{thm:bound_thirdder} above), the results follow from an application
of Taylor's theorem with a third order remainder. For details we refer to the proof of
Lemma 2.8 in \cite{Kangal2015}; only ${\mathcal D}(A(x)), {\mathcal D}^{{\mathcal V}_\ell}(A(x))$
here takes the role of $\lambda_J(\omega), \lambda_J^{(\ell)}(\omega)$ in that proof. 
\end{proof}

The superlinear rate-of-convergence result below is a consequence of the Hermite interpolation properties,
as well as Lemma \ref{thm:accuracy_secdev} that relates the second derivatives of the distance
functions. Intuitively, an analogy can be made between the reduced function ${\mathcal D}^{{\mathcal V}_\ell}(A(x))$ 
and the model function used by a quasi-Newton method about $x^{(\ell)}$; the reduced function satisfies
\begin{equation*}
	\begin{split}
	{\mathcal D}^{{\mathcal V}_\ell}(A(x^{(\ell)})) = {\mathcal D}(A(x^{(\ell)})), \;
	\nabla {\mathcal D}^{{\mathcal V}_\ell}(A(x^{(\ell)})) = \nabla {\mathcal D}(A(x^{(\ell)})), \\
	\nabla^2 {\mathcal D}^{{\mathcal V}_\ell}(A(x^{(\ell)})) \approx \nabla^2 {\mathcal D}(A(x^{(\ell)}))
	\hskip 19ex
	\end{split}
\end{equation*}
where the gap between the Hessians decay to zero, moreover $x^{(\ell+1)}$ is defined as the maximizer 
of ${\mathcal D}^{{\mathcal V}_\ell}(A(x))$.
Formally, the proof of Theorem 3.3 in \cite{Kangal2015} applies identically with 
${\mathcal D}(A(x)), {\mathcal D}^{{\mathcal V}_\ell}(A(x))$ taking the role of 
$\lambda_J(\omega), \lambda_J^{(\ell)}(\omega)$ in that work
to deduce this superlinear rate-of-convergence result .
\begin{theorem}[Local Superlinear Convergence]\label{thm:convergence_rate}
Suppose that the sequence $\{x^{(\ell)} \}$ generated by Algorithm \ref{alg} when $d=1$ or
by Algorithm \ref{alge} converges to a point $x_\ast$ that is strictly in the interior of $\widetilde{\Omega}$
such that \textbf{(i)} ${\mathcal D}(A(x_\ast))$ is attained at $z_\ast \in \ri {\mathbb R}$ uniquely, 
$\sigma(x_\ast,z_\ast)$ is simple, $\sigma_{\omega\omega}(x_\ast, z_\ast) > 0$, $\sigma_{\alpha}(x_\ast, z_\ast) > 0$,
and \textbf{(ii)} $\nabla^2 {\mathcal D}(A(x_\ast))$
is invertible. Then, there exists a constant $\mu \in {\mathbb R}^+$ such that
\[
	\frac{ \| x^{(\ell+1)} - x_\ast \|_2 }{ \| x^{(\ell)} - x_\ast \|_2 \max \{ \| x^{(\ell)} - x_\ast \|_2, \| x^{(\ell-1)} - x_\ast \|_2 \} }	
	\;\; \leq \;\;		\mu
	\quad \;	\forall \ell \geq 2.
\]
\end{theorem}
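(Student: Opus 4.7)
The plan is to adapt the quasi-Newton-style analysis from the proof of Theorem 3.3 in \cite{Kangal2015}, treating the reduced distance-to-instability function $\mathcal{D}^{\mathcal{V}_\ell}(A(x))$ as a second-order model for $\mathcal{D}(A(x))$ at the current iterate $x^{(\ell)}$. The Hermite interpolation properties (Theorems \ref{thm:Hermite_interpolate2} and \ref{thm:Hermite_interpolate_ext}) ensure that this model matches both the value and the gradient of $\mathcal{D}(A(x))$ at $x^{(\ell)}$, while Lemma \ref{thm:accuracy_secdev} controls the mismatch between the two Hessians.

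I would start by observing that, since $x_\ast$ lies strictly inside $\widetilde{\Omega}$ and $x^{(\ell)} \to x_\ast$, for all $\ell$ large enough both $x^{(\ell)}$ and $x^{(\ell+1)}$ lie in a ball $\mathcal{B}(x_\ast,\eta)$ contained in the interior of $\widetilde{\Omega}$ on which Lemma \ref{thm:extend_smoothness} guarantees that $\mathcal{D}(A(x))$ and $\mathcal{D}^{\mathcal{V}_\ell}(A(x))$ are three times continuously differentiable and that $x^{(\ell+1)}$ is the unique interior maximizer of $\mathcal{D}^{\mathcal{V}_\ell}(A(\cdot))$. Hence the first-order conditions $\nabla \mathcal{D}^{\mathcal{V}_\ell}(A(x^{(\ell+1)})) = 0$ and $\nabla \mathcal{D}(A(x_\ast)) = 0$ hold. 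Taylor-expanding $\nabla \mathcal{D}^{\mathcal{V}_\ell}(A(\cdot))$ around $x^{(\ell)}$ at $x^{(\ell+1)}$ and $\nabla \mathcal{D}(A(\cdot))$ around $x_\ast$ at $x^{(\ell)}$, with third-order remainders controlled uniformly in $\ell$ by Lemma \ref{thm:bound_thirdder}, and invoking the Hermite identity $\nabla \mathcal{D}^{\mathcal{V}_\ell}(A(x^{(\ell)})) = \nabla \mathcal{D}(A(x^{(\ell)}))$, I would combine the two expansions to obtain
\[
x^{(\ell+1)} - x_\ast \;=\; \left[ \nabla^2 \mathcal{D}^{\mathcal{V}_\ell}(A(x^{(\ell)})) \right]^{-1} \left\{ \nabla^2 \mathcal{D}^{\mathcal{V}_\ell}(A(x^{(\ell)})) - \nabla^2 \mathcal{D}(A(x_\ast)) \right\} (x^{(\ell)} - x_\ast) \;+\; R,
\]
where $R$ collects the higher-order Taylor remainders.

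The bulk of the estimation then reduces to bounding the Hessian difference via the triangle inequality,
\[
\left\| \nabla^2 \mathcal{D}^{\mathcal{V}_\ell}(A(x^{(\ell)})) - \nabla^2 \mathcal{D}(A(x_\ast)) \right\|_2 \;\leq\; \left\| \nabla^2 \mathcal{D}^{\mathcal{V}_\ell}(A(x^{(\ell)})) - \nabla^2 \mathcal{D}(A(x^{(\ell)})) \right\|_2 + \left\| \nabla^2 \mathcal{D}(A(x^{(\ell)})) - \nabla^2 \mathcal{D}(A(x_\ast)) \right\|_2.
\]
The first summand is $O(h^{(\ell)})$ by Lemma \ref{thm:accuracy_secdev}; in both Algorithm \ref{alg} with $d=1$ and Algorithm \ref{alge}, we have $h^{(\ell)} = \| x^{(\ell)} - x^{(\ell-1)} \|_2 \leq \| x^{(\ell)} - x_\ast \|_2 + \| x^{(\ell-1)} - x_\ast \|_2$, hence $O(\max\{\| x^{(\ell)} - x_\ast \|_2, \| x^{(\ell-1)} - x_\ast \|_2\})$. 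The second summand is $O(\| x^{(\ell)} - x_\ast \|_2)$ by local Lipschitz continuity of $\nabla^2 \mathcal{D}(A(\cdot))$ on $\mathcal{B}(x_\ast,\eta)$. The inverse $[\nabla^2 \mathcal{D}^{\mathcal{V}_\ell}(A(x^{(\ell)}))]^{-1}$ has norm uniformly bounded in $\ell$: invertibility of $\nabla^2 \mathcal{D}(A(x_\ast))$ plus the Hessian closeness just bounded gives invertibility with uniform norm bound via the standard perturbation inequality. The stated bound then emerges with $\mu$ absorbing the uniform inverse-Hessian bound, the Lipschitz constant of $\nabla^2 \mathcal{D}(A(\cdot))$, and the implicit constants from Lemmas \ref{thm:bound_thirdder} and \ref{thm:accuracy_secdev}.

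The main obstacle is the remainder $R$, which contains a piece of size $O(\| x^{(\ell+1)} - x^{(\ell)} \|_2^2)$ that cannot be absorbed into the right-hand side of the target inequality without some argument against circular reasoning. The standard remedy, used in \cite{Kangal2015}, is a bootstrap: first extract from the same identity a crude linear-convergence bound $\| x^{(\ell+1)} - x_\ast \|_2 \leq C_0 \| x^{(\ell)} - x_\ast \|_2$ with $C_0 < 1$ valid for all sufficiently large $\ell$; then use the triangle inequality $\| x^{(\ell+1)} - x^{(\ell)} \|_2 \leq (1+C_0)\| x^{(\ell)} - x_\ast \|_2$ to convert $R$ into a term of size $O(\| x^{(\ell)} - x_\ast \|_2^2)$; and finally re-insert this improved remainder into the identity above to obtain the claimed superlinear rate. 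This is exactly the mechanism employed in Theorem 3.3 of \cite{Kangal2015}, and it carries over verbatim once ${\mathcal D}(A(x))$ and ${\mathcal D}^{\mathcal{V}_\ell}(A(x))$ play the roles of $\lambda_J(\omega)$ and $\lambda_J^{(\ell)}(\omega)$ there.
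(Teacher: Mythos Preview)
Your proposal is correct and follows exactly the approach the paper takes: the paper's own proof simply states that the argument of Theorem~3.3 in \cite{Kangal2015} applies verbatim with ${\mathcal D}(A(x))$ and ${\mathcal D}^{{\mathcal V}_\ell}(A(x))$ in place of $\lambda_J(\omega)$ and $\lambda_J^{(\ell)}(\omega)$, invoking the Hermite interpolation properties and Lemma~\ref{thm:accuracy_secdev}. Your write-up is a faithful unpacking of that referenced argument, including the bootstrap step for the remainder, so there is nothing to add.
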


\begin{remark}
As indicated in Remark \ref{rem:nonsmooth_rcase}, if $A(x)$ is real-valued, then the global minimizers of
$\sigma(x_\ast, {\rm i} \omega)$ over $\omega \in {\mathbb R}$ are in plus, minus pairs. This means that the uniqueness 
assumption on $z_\ast \in \ri {\mathbb R}$, the point where ${\mathcal D}(A(x_\ast))$ is attained, in the main rate-of-convergence 
result (Theorem \ref{thm:convergence_rate}) and the auxiliary results leading to this main result is no longer true.
In this case, this superlinear rate-of-convergence result still holds, but under the assumption that ${\mathcal D}(A(x_\ast))$
is attained uniquely over all purely imaginary numbers with nonnegative (or with nonpositive) imaginary parts.
It is straightforward to modify the analysis above to this setting by restricting $z$ to the first quadrant in the complex
plane, requiring $z^{(\ell)}$ to have nonnegative imaginary part. 
\end{remark}

\medskip

\section{Solutions of the Reduced Problems}\label{sec:sf_implement}

\subsection{Computation of ${\mathcal D}^{{\mathcal V}_\ell}(A(x))$}
 Algorithms \ref{alg} and \ref{alge} require the maximization of the distance ${\mathcal D}^{{\mathcal V}_\ell}(A(x))$ 
 over $x \in \widetilde{\Omega}$ at step $\ell$. For this maximization the objective
 \begin{equation}\label{eq:rect_obj}
 	{\mathcal D}^{{\mathcal V}_\ell}(A(x)) 
			\; = \;
		\min_{z \in {\mathbb C}^+} \; \sigma_{\min} (A(x) V_\ell  -  z V_\ell )
\end{equation}
needs to be computed at several $x$ in ${\mathbb R}^d$. Assuming the dimension of ${\mathcal V}_\ell$
is small, which is usually the case in practice as the framework converges at a superlinear
rate, it turns out this can be performed efficiently and reliably by means of simple extensions of the 
techniques to compute the distance to uncontrollability \cite{Gu2000, Gu2006} characterized by
\begin{equation}\label{eq:dist_uncont}
	\min_{z \in {\mathbb C}} \;
	\sigma_{\min}
	\left(
		\left[
			\begin{array}{cc}
				F - zI		&	G
			\end{array}
		\right]
	\right)
\end{equation}
for a given pair $F \in {\mathbb C}^{q\times q}, G \in {\mathbb C}^{q\times m}$ with $q \geq m$. 
Essential steps are outlined next.

We first compute a reduced QR factorization of the form
\[
	\left[
		\begin{array}{cc}
			V_\ell	&	A(x) V_\ell
		\end{array}
	\right]
			=
	\underbrace{
	\left[
		\begin{array}{cc}
			V_\ell	&	\widetilde{V}_\ell
		\end{array}
	\right]}_Q
	\underbrace{
	\left[
		\begin{array}{cc}
			I_\ell		&	R_{A}	\\
			0		&	R_{B}	\\
		\end{array}
	\right]}_R \: ,
\] 
where $Q \in {\mathbb C}^{n\times 2\ell}$ has orthonormal columns, $R \in {\mathbb R}^{2\ell \times 2\ell}$
is upper triangular, and $I_\ell$ denotes the identity matrix of size $\ell \times \ell$. Then the singular values 
of $A(x) V_\ell  -  z V_\ell$ and 
\[
	Q^T \left\{ A(x) V_\ell  -  z V_\ell \right\}
		\;	=	\;
	\left[
		\begin{array}{c}
			R_{A}  \\
			R_{B}
		\end{array}
	\right]
			-
	z
	\left[
		\begin{array}{c}
			I_\ell \\
			0
		\end{array}
	\right]
		\;	=	\;
	\left[
		\begin{array}{c}
			R_{A}  - z I_\ell \\
			R_{B}
		\end{array}
	\right]
\]
are the same for every $z$. Hence (\ref{eq:rect_obj}) reduces to
\begin{equation}\label{eq:dist_uncont+}
	{\mathcal D}^{{\mathcal V}_\ell}(A(x)) 
			\; = \;
		\min_{z \in {\mathbb C}^+} \: \sigma_{\min} \left(
								\left[
									\begin{array}{cc}
										R_A^T	- z I_\ell	  &	R_B^T
									\end{array}
								\right] \right)
\end{equation}
which is of the form (\ref{eq:dist_uncont}) except that the minimization has to be performed
over ${\mathbb C}^+$ rather than ${\mathbb C}$. Modifications of the techniques in
\cite{Gu2000, Gu2006} to solve (\ref{eq:dist_uncont+}), in particular to perform optimization 
over ${\mathbb C}^+$, are straightforward. 

In practice we adopt the approach in \cite{Gu2006} combined with BFGS (that employs
line searches ensuring the satisfaction of the weak Wolfe conditions by the iterates). First
BFGS converges to a local minimizer of (\ref{eq:dist_uncont+}), which is then subjected to
the trisection test from \cite{Gu2006} to determine whether the local minimizer
is a global minimizer or not. If it is a global minimizer, ${\mathcal D}^{{\mathcal V}_\ell}(A(x))$
is indeed computed. Otherwise, the trisection test provides a point in ${\mathbb C}^+$
where the singular value function takes a smaller value compared with the converged local
minimizer, and BFGS is restarted with this point. Each trisection test requires the extraction
of the real eigenvalues of either a $2\ell^2 \times 2\ell^2$ matrix or a $4\ell^2 \times 4\ell^2$
matrix pencil. In practice we employ the latter; even though these are larger generalized eigenvalue
problems, they turn out to be better conditioned and it is usually possible to solve them
reliably in the presence of rounding errors.

An alternative approach\footnote{We thank to Daniel Kressner for pointing out this alternative approach.} 
for the computation of ${\mathcal D}^{{\mathcal V}_\ell}(A(x))$ is based on the reduced QR factorization
\[
   	\left[
		\begin{array}{cccc}
			V_\ell	&	A_1 V_\ell		&	\dots		&	A_\kappa V_\ell
		\end{array}
	\right]
		\;\;	=	\;\;
	\widehat{Q}
	\left[
		\begin{array}{cc}
			I_\ell			&		\widehat{R}_A	\\
			0			&		\widehat{R}_B	
		\end{array}
	\right]
\]
where $\widehat{Q} \in {\mathbb C}^{n\times (\kappa+1) \ell}$, $\widehat{R} \in {\mathbb C}^{\ell \times \ell}$,
$\widehat{R}_A \in {\mathbb C}^{\ell \times \kappa \ell}$ and $\widehat{R}_B \in {\mathbb C}^{\kappa \ell \times \kappa \ell}$.
Partitioning
\begin{equation*}
	\widehat{R}_A
		 =
	\left[
		\begin{array}{ccc}
			\widehat{R}_{A,1}	&	\dots		&	\widehat{R}_{A,\kappa}
		\end{array}
	\right],	\quad\quad
	\widehat{R}_B
		 =
	\left[
		\begin{array}{ccc}
			\widehat{R}_{B,1}	&	\dots		&	\widehat{R}_{B,\kappa}
		\end{array}
	\right]
\end{equation*}
so that $\widehat{R}_{A,j} \in {\mathbb C}^{\ell \times \ell}$, 
$\widehat{R}_{B,j} \in {\mathbb C}^{\kappa \ell \times \ell}$, and little effort show that
\[
	{\mathcal D}^{{\mathcal V}_\ell}(A(x)) 
			\; = \;
		\min_{z \in {\mathbb C}^+} \: \sigma_{\min} \left(
								\left[
									\begin{array}{cc}
										\widetilde{R}_A(x)^T	- z I_\ell	  &	\widetilde{R}_B(x)^T
									\end{array}
								\right] \right)
\]
where $\widetilde{R}_A(x) = \sum_{j=1}^\kappa f_j(x) \widehat{R}_{A,j}$, 
$\widetilde{R}_B(x) = \sum_{j=1}^\kappa f_j(x) \widehat{R}_{B,j}$. This approach requires the computation of
only one QR factorization for the minimization of ${\mathcal D}^{{\mathcal V}_\ell}(A(x))$ over $x$ rather than one QR factorization
for each evaluation of  ${\mathcal D}^{{\mathcal V}_\ell}(A(x))$. But the difference it makes appears to be insignificant
in practice, because $V_\ell$ has typically a few columns and the computational cost of these QR factorizations is linear,
quite small compared to other ingredients of the algorithm.

\subsection{Maximization of ${\mathcal D}^{{\mathcal V}_\ell}(A(x))$}
Suppose first that ${\mathcal D}^{{\mathcal V}_\ell}(A(x))$ is attained on the imaginary axis 
for all $x \in \widetilde{\Omega}$. Then the regularity result  in part (i) of Theorem \ref{thm:der_D} 
remains to be true but with ${\mathcal D}^{{\mathcal V}_\ell}(A(x))$ taking the role of ${\mathcal D}(A(x))$.
A consequence is that upper support functions of Theorem \ref{thm:sup_func} extend to this
rectangular setting, that is, for a given $\widetilde{x}$ where ${\mathcal D}^{{\mathcal V}_\ell}(A(x))$
is differentiable, we have
\[
	[ {\mathcal D}^{{\mathcal V}_\ell}(A(x)) ]^2		 \; \leq \;		
		q(x; \widetilde{x})	
		:=	
		[ {\mathcal D}^{{\mathcal V}_\ell}(A(\widetilde{x})) ]^2		+	
		\nabla [{\mathcal D}^{{\mathcal V}_\ell}(A(\widetilde{x}))]^2	(x - \widetilde{x}) 	+	
		\frac{\gamma}{2} \| x - \widetilde{x} \|^2_2 \;\;\; \forall x,
\]
where $\gamma$ now satisfies 
$\lambda_{\max} ( \nabla^2 [{\mathcal D}^{{\mathcal V}_\ell}(A(x))]^2 )  \leq \gamma$ 
for all $x$ such that ${\mathcal D}^{{\mathcal V}_\ell}(A(x))$ is differentiable. Furthermore, estimation of 
such a $\gamma$ is facilitated by simple adaptations of Theorem \ref{thm:bound_sdev},
that is, defining 
$M^{{\mathcal V}_\ell} (x,z) := (A^{V_\ell}(x) - z V_\ell)^\ast (A^{V_\ell}(x) - z V_\ell)$
and denoting with $z(x)$ the point where ${\mathcal D}^{{\mathcal V}_\ell}(A(x))$ is attained,
the bound
\[
	\lambda_{\max} ( \nabla^2 [{\mathcal D}^{{\mathcal V}_\ell}(A(x))]^2 )	
			\;\;	\leq	\;\;	
	\lambda_{\max}
		\left(  \nabla^2_{xx} M^{{\mathcal V}_\ell} (x,z(x)) \right)
\]
holds for all $x$ where ${\mathcal D}^{{\mathcal V}_\ell}(A(x))$ is differentiable.
Specifically, for the affine case, we have 
$A^{V_\ell}(x) = B_0 V_\ell + \sum_{j = 1}^\kappa x_j B_j V_\ell$
and $\nabla^2_{xx} M^{{\mathcal V}_\ell}(x,z)$ is given by the right-hand side of (\ref{eq:matrix_forgamma})
by replacing $B_j$ with $B_j V_\ell$ for $j = 1, \dots, \kappa$. 

The remarks of the previous paragraph also applies if ${\mathcal D}^{{\mathcal V}_\ell}(A(x))$
is attained strictly on the right-hand side of the complex plane for all $x \in \widetilde{\Omega}$.
In this case the arguments deal with both the complex and the imaginary parts of the point
$z(x)$ where ${\mathcal D}^{{\mathcal V}_\ell}(A(x))$ is attained, which, at points of differentiability,
are defined implicitly by 
$\sigma^{{\mathcal V}_\ell}_\alpha (x,z(x)) = 0$, $\sigma^{{\mathcal V}_\ell}_\omega (x,z(x))) = 0$.

Hence, we employ Algorithm \ref{small_alg} for the maximization of 
${\mathcal D}^{{\mathcal V}_\ell}(x)$. But naturally $q(x; x^{(j)})$ is now defined in terms 
of ${\mathcal D}^{{\mathcal V}_\ell}(A(x^{(j)}))$, $\nabla {\mathcal D}^{{\mathcal V}_\ell}(A(x^{(j)}))$ 
and $\gamma$ is a global upper bound on  $\lambda_{\max} ( \nabla^2 [{\mathcal D}^{{\mathcal V}_\ell}(A(x))]^2 )$ 
for all $x \in \widetilde{\Omega}$. The algorithm is guaranteed to converge under the
assumptions of the previous paragraphs. 

A likely situation is that ${\mathcal D}^{{\mathcal V}_\ell}(A(x))$ 
is attained on the imaginary axis for some $x$, yet strictly on the right-hand side of the complex plane 
for some other values of $x$. Even in this case, the global convergence of the algorithm can be asserted provided
$\gamma$ is large enough. Some difficulty arises due to the possibility of points $\widetilde{x}$ such that 
${\mathcal D}^{{\mathcal V}_\ell}(A(\widetilde{x}))$ is attained at $\widetilde{z} \in \ri {\mathbb R}$ with 
$\sigma^{{\mathcal V}_\ell}_\alpha (\widetilde{x}, \widetilde{z}) = 0$. The left-hand derivative of 
${\mathcal D}^{{\mathcal V}_\ell}(A(x))$ in a particular direction at such a point can be smaller than the 
right-hand derivative, which means $q(x; \widetilde{x})$ is not an upper support function anymore. However, 
such a point $\widetilde{x}$ is away from maximizers of ${\mathcal D}^{{\mathcal V}_\ell}(A(x))$, and 
$q(x; \widetilde{x})$ is still an upper support function for ${\mathcal D}^{{\mathcal V}_\ell}(A(x))$ near a 
global maximizer locally provided $\gamma$ is large enough, from which the global convergence of the 
algorithm can be deduced.

In practice we choose $\gamma$ based on the largest eigenvalue of $\nabla^2_{xx} M^{{\mathcal V}_\ell} (x,z(x))$,
which seems to be working well.


\section{A Subspace Framework for Uniformly Stable Problems}\label{sec:sf_uniform_stab} 
If it is known in advance that $A(x)$ has all of its eigenvalues in the closed left-half of the complex
plane at all $x \in \widetilde{\Omega}$ (such is the case for instance for dissipative Hamiltonian
systems, which naturally arise from multi-body problems, circuit simulation, finite element
modeling of a disk brake \cite{Mehl2016}), 
then the reduced problems can be simplified. In particular, there is theoretical ground to perform 
the inner minimization problems over the imaginary axis. Formally, we operate on the 
reduced problems
\[
	\max_{x \in \widetilde{\Omega}}	\: \widetilde{{\mathcal D}}^{{\mathcal V}}(A(x))
	\quad\quad	{\rm with}	\quad
	\widetilde{{\mathcal D}}^{{\mathcal V}}(A(x))
		\; :=	\;
	\min_{\omega \in {\mathbb R}} \: \sigma_{\min}(A^V(x) - \omega \ri V),
\]
where $V$ is a matrix whose columns form an orthonormal basis for ${\mathcal V}$
and $A^V(x)$ is defined as in (\ref{eq:red_mat_funs}). 

The corresponding greedy framework is presented in Algorithm \ref{alg_uniform_stab}. As 
before at every iteration, we solve a reduced problem. We compute the distance to instability 
of the full problem at the maximizing $x$, in particular retrieve $\omega \in {\mathbb R}$ 
where the full distance is attained, and expand the subspace with the inclusion of a
right singular vector corresponding to $\sigma_{\min}(A(x) - \ri \omega I)$ at the
optimal $x$ and $\omega$.

Following the footsteps of the arguments in Sections \ref{sec:large_prob} and \ref{sec:convergence}, 
it is possible to deduce (i) the monotonicity, that is 
$
	{\mathcal V} \supseteq {\mathcal W} \; \Longrightarrow \;
		\widetilde{{\mathcal D}}^{{\mathcal V}}(A(x)) \leq \widetilde{{\mathcal D}}^{{\mathcal W}}(A(x)),
$
and (ii) Hermite interpolation, that is 
\begin{enumerate}
  \item[\bf (1)]
  $\widetilde{{\mathcal D}}^{{\mathcal V}_\ell}(A(x^{(j)}))	 \; = \; {\mathcal D}(A(x^{(j)}))$, as well as
  \item[\bf (2)]
  if ${\mathcal D}(A(x))$ and $\widetilde{{\mathcal D}}^{{\mathcal V}_\ell}(A(x))$ are differentiable at $x = x^{(j)}$, then
  $\nabla \widetilde{{\mathcal D}}^{{\mathcal V}_\ell}(A(x^{(j)}))	 \; = \; \nabla {\mathcal D}(A(x^{(j)}))$
\end{enumerate}
for $j = 1,\dots, \ell$. These properties, as before, pave the way for global convergence and
superlinear rate-of-convergence results presented formally below.
\begin{theorem}\label{thm:global_conv_uniform_stab}
Suppose that the eigenvalues of $A(x)$ are contained in the closed left-half of the complex plane at all $x \in \widetilde{\Omega}$.
Every convergent subsequence of the sequence $\{ x^{(\ell)} \}$ generated by Algorithm \ref{alg_uniform_stab} 
in the infinite dimensional setting converges to a global maximizer of ${\mathcal D}(A(x))$ over all 
$x \in \widetilde{\Omega}$. Furthermore,
\[
	\lim_{\ell\rightarrow \infty} \; \widetilde{{\mathcal D}}^{{\mathcal V}_\ell}(A(x^{(\ell+1)}))
		\; = \;
	\lim_{\ell\rightarrow \infty} \; \max_{x \in \widetilde{\Omega}} \: \widetilde{{\mathcal D}}^{{\mathcal V}_\ell} (A(x))
		\; = \;
	\max_{x \in \widetilde{\Omega}} \: {\mathcal D}(A(x)).
\]
\end{theorem}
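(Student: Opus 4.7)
The proof plan mirrors the strategy used for Theorem \ref{thm:global_conv}, replacing ${\mathcal D}^{{\mathcal V}}$ with $\widetilde{{\mathcal D}}^{{\mathcal V}}$ throughout, and exploiting the fact that, under the uniform stability hypothesis, ${\mathcal D}(A(x)) = \min_{\omega \in {\mathbb R}} \sigma_{\min}(A(x) - \omega {\rm i} I)$ by the maximum modulus principle, so the full distance is itself an inner minimum over the imaginary axis. The three ingredients I would establish first, in parallel to Theorems \ref{thm:mono}, \ref{thm:interpolate} and Lemma \ref{thm:Lip_cont}, are: \textbf{(a)} monotonicity, \textbf{(b)} Hermite interpolation of values, and \textbf{(c)} a uniform Lipschitz bound on $\widetilde{{\mathcal D}}^{{\mathcal V}}(A(\cdot))$ that holds with a constant independent of ${\mathcal V}$.

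For \textbf{(a)}, I would follow the proof of Theorem \ref{thm:mono} verbatim but restrict $z$ to $\ri {\mathbb R}$: whenever ${\mathcal W} \supseteq {\mathcal V}$, taking infima of $\| (A(x) - \omega \ri I) w\|$ over a larger set of unit vectors gives a smaller value, pointwise in $\omega$, and then minimising over $\omega \in {\mathbb R}$ preserves the inequality; specialising to ${\mathcal W} = \ell_2({\mathbb N})$ and using the maximum modulus principle yields ${\mathcal D}(A(x)) \leq \widetilde{{\mathcal D}}^{{\mathcal V}}(A(x))$ under uniform stability. For \textbf{(b)}, I would reuse the argument of Theorem \ref{thm:interpolate}: since $v_j$ lies in ${\mathcal V}_\ell$, writing $v_j = V_\ell \alpha_j$ with $\|\alpha_j\|=1$ gives $\| (A^{V_\ell}(x^{(j)}) - \omega^{(j)}\ri V_\ell) \alpha_j\| = \|(A(x^{(j)}) - \omega^{(j)}\ri I) v_j\| = {\mathcal D}(A(x^{(j)}))$, which combined with the inequality from \textbf{(a)} forces equality. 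For \textbf{(c)}, the same Weyl-theorem plus Lipschitz-continuity-of-$f_j$ computation as in part (i) of Lemma \ref{thm:Lip_cont} provides $| \sigma_{\min}(A^V(\widetilde{x}) - \omega \ri V) - \sigma_{\min}(A^V(\widehat{x}) - \omega \ri V) | \leq \zeta \|\widetilde{x} - \widehat{x}\|_2$ uniformly in $\omega$ and ${\mathcal V}$; taking the infimum over $\omega \in {\mathbb R}$ transfers the bound to $\widetilde{{\mathcal D}}^{{\mathcal V}}(A(\cdot))$ via the two-line sup/inf argument in part (ii) of Lemma \ref{thm:Lip_cont}.

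With \textbf{(a)}--\textbf{(c)} in hand, the main convergence argument is identical to that of Theorem \ref{thm:global_conv}. Taking a convergent subsequence $\{x^{(j_\ell)}\}$ of the iterates, the interpolation property gives
\[
\max_{x \in \widetilde{\Omega}} {\mathcal D}(A(x)) \;\geq\; {\mathcal D}(A(x^{(j_\ell)})) \;=\; \widetilde{{\mathcal D}}^{{\mathcal V}_{j_\ell}}(A(x^{(j_\ell)})),
\]
and the monotonicity combined with the definition of $x^{(j_{\ell+1})}$ as the maximiser of $\widetilde{{\mathcal D}}^{{\mathcal V}_{j_{\ell+1}-1}}(A(\cdot))$ gives
\[
\max_{x \in \widetilde{\Omega}} {\mathcal D}(A(x)) \;\leq\; \widetilde{{\mathcal D}}^{{\mathcal V}_{j_{\ell+1}-1}}(A(x^{(j_{\ell+1})})) \;\leq\; \widetilde{{\mathcal D}}^{{\mathcal V}_{j_\ell}}(A(x^{(j_{\ell+1})})).
\]
The uniform Lipschitz estimate forces $|\widetilde{{\mathcal D}}^{{\mathcal V}_{j_\ell}}(A(x^{(j_{\ell+1})})) - \widetilde{{\mathcal D}}^{{\mathcal V}_{j_\ell}}(A(x^{(j_\ell)}))| \to 0$ as $\ell \to \infty$, so squeezing yields $\lim_\ell {\mathcal D}(A(x^{(j_\ell)})) = \max_{x \in \widetilde{\Omega}} {\mathcal D}(A(x))$, and continuity of ${\mathcal D}(A(\cdot))$ identifies the limit of the subsequence with a global maximiser. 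The second statement, on convergence of the sequence of reduced maxima, is then obtained by reproducing the monotonicity-plus-subsequence argument of part (ii) of \cite[Theorem 3.1]{Kangal2015}: the sequence $\{\widetilde{{\mathcal D}}^{{\mathcal V}_\ell}(A(x^{(\ell+1)}))\}$ is decreasing by \textbf{(a)} and bounded below by ${\mathcal D}_\ast := \max_{x} {\mathcal D}(A(x))$, hence convergent, while the subsequence $\{\widetilde{{\mathcal D}}^{{\mathcal V}_{j_{\ell+1}-1}}(A(x^{(j_{\ell+1})}))\}$ sandwiched between ${\mathcal D}_\ast$ and $\widetilde{{\mathcal D}}^{{\mathcal V}_{j_\ell}}(A(x^{(j_{\ell+1})})) \to {\mathcal D}_\ast$ pins the limit to ${\mathcal D}_\ast$.

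The only real subtlety, and the step I would think about most carefully, is the interaction between the imaginary-axis-only inner minimisation in the reduced problem and the ${\mathbb C}^+$ inner minimisation that defines ${\mathcal D}(A(x))$; the uniform stability hypothesis is exactly what reconciles the two via the maximum modulus principle, so the monotonicity inequality ${\mathcal D}(A(x)) \leq \widetilde{{\mathcal D}}^{{\mathcal V}}(A(x))$ required in the upper-bound step above is a genuine consequence of the hypothesis rather than a formal analogue of Theorem \ref{thm:mono}. Once this point is pinned down, the remainder of the proof is a mechanical translation.
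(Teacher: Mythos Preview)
Your proposal is correct and matches the paper's approach exactly: the paper does not give a separate proof of this theorem but simply remarks that monotonicity and (Hermite) interpolation for $\widetilde{{\mathcal D}}^{{\mathcal V}}$ follow ``the footsteps of the arguments in Sections \ref{sec:large_prob} and \ref{sec:convergence},'' after which the global convergence proof of Theorem \ref{thm:global_conv} carries over verbatim. Your identification of the uniform stability hypothesis as precisely what is needed to make the inequality ${\mathcal D}(A(x)) \leq \widetilde{{\mathcal D}}^{{\mathcal V}}(A(x))$ hold (via the maximum modulus principle) is the one substantive point, and you have it right.
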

\begin{theorem}\label{thm:convergence_rate_uniform_stab}
Suppose that the eigenvalues of $A(x)$ are contained in the closed left-half of the complex plane at all $x \in \widetilde{\Omega}$.
Suppose also that the sequence $\{x^{(\ell)} \}$ generated by Algorithm \ref{alg_uniform_stab} when $d=1$ 
converges to a point $x_\ast$ that is strictly in the interior of $\widetilde{\Omega}$ such that 
\textbf{(i)} ${\mathcal D}(A(x_\ast))$ is attained at $\omega_\ast$ uniquely, 
$\sigma_{\min}(A(x_\ast) - \omega_\ast {\rm i} I)$ is positive and simple, 
$\partial^2 \left[ \sigma_{\min}(A(x_\ast) - \omega_\ast {\rm i} I) \right] / \partial \omega^2 > 0$, as well as
\textbf{(ii)} ${\mathcal D}''(A(x_\ast)) \neq 0$. Then, there exists a constant
$\mu \in {\mathbb R}^+$ such that
\[
	\frac{ | x^{(\ell+1)} - x_\ast | }
		{ | x^{(\ell)} - x_\ast | \max \{ | x^{(\ell)} - x_\ast |, | x^{(\ell-1)} - x_\ast | \} }	\;\; \leq \;\;		\mu
	\quad \;	\forall \ell \geq 2.
\]
\end{theorem}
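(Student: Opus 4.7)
The plan is to replicate, mutatis mutandis, the chain of lemmas leading to Theorem \ref{thm:convergence_rate}, with $\widetilde{{\mathcal D}}^{{\mathcal V}_\ell}(A(x))$ playing the role of ${\mathcal D}^{{\mathcal V}_\ell}(A(x))$. Because the inner minimization is now intrinsically over the imaginary axis, the complex variable $z = \alpha + {\rm i}\omega$ collapses to a single real $\omega$, and most of the complications in Section \ref{sec:rate_convergence} surrounding the real part $\alpha$ disappear. In particular, Lemmas \ref{thm:lbounds_sder} and \ref{thm:local_rep} reduce to a purely one-dimensional implicit function argument in $\omega$, and the assumption $\sigma_\alpha(x_\ast,z_\ast) > 0$ is not needed.

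First I would verify a uniform Lipschitz bound for $\widetilde{{\mathcal D}}^{{\mathcal V}}(A(x))$ analogous to Lemma \ref{thm:Lip_cont}, which together with the monotonicity and interpolation properties for $\widetilde{{\mathcal D}}^{{\mathcal V}}$ stated in the passage preceding the theorem yields Theorem \ref{thm:global_conv_uniform_stab} and hence $x^{(\ell)} \to x_\ast$. Next, the infinite-dimensional analog of Lemma \ref{thm:sval_sder_ineq} gives $\sigma_{\omega\omega}^{{\mathcal V}_\ell}(x^{(\ell)},{\rm i}\omega^{(\ell)}) \geq \sigma_{\omega\omega}(x^{(\ell)},{\rm i}\omega^{(\ell)})$, which combined with continuity and assumption (i) produces a uniform lower bound $\sigma_{\omega\omega}^{{\mathcal V}_\ell} \geq \delta/2$ on a common neighborhood of $(x_\ast,\omega_\ast)$ for all large $\ell$. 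A one-dimensional implicit function argument then defines smooth real-valued functions $\widetilde{\omega}(x)$ and $\widetilde{\omega}^{{\mathcal V}_\ell}(x)$ on a common ball around $x_\ast$ with ${\mathcal D}(A(x)) = \sigma(x,{\rm i}\widetilde{\omega}(x))$ and $\widetilde{{\mathcal D}}^{{\mathcal V}_\ell}(A(x)) = \sigma^{{\mathcal V}_\ell}(x,{\rm i}\widetilde{\omega}^{{\mathcal V}_\ell}(x))$. To conclude that these implicit definitions actually capture the global minima (the analog of Lemma \ref{thm:extend_smoothness}), I would replicate the gap-preservation argument based on the uniform Lipschitz bound, using monotonicity and interpolation to lift the gap $\delta_\ast$ between the local and distant minima from the full problem to the reduced problem; this ensures uniform three-times continuous differentiability of both distance functions on a fixed neighborhood of $x_\ast$.

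The next stage is to establish the analogs of Lemmas \ref{thm:bound_thirdder} and \ref{thm:accuracy_secdev}. Uniform boundedness of the third derivatives of $\widetilde{{\mathcal D}}^{{\mathcal V}_\ell}(A(x))$ is obtained by expressing them through the first three derivatives of $\sigma^{{\mathcal V}_\ell}(x,{\rm i}\omega)$ and the first two derivatives of $\widetilde{\omega}^{{\mathcal V}_\ell}(x)$, all controllable in terms of norms of $A_j$, $f_j$, $\nabla f_j$, $\nabla^2 f_j$, and the uniform singular-value gap. The accuracy estimate $|{\mathcal D}''(A(x^{(\ell)})) - [\widetilde{{\mathcal D}}^{{\mathcal V}_\ell}]''(A(x^{(\ell)}))| = O(h^{(\ell)})$ with $h^{(\ell)} := |x^{(\ell)} - x^{(\ell-1)}|$ then follows from the four Hermite interpolation conditions at $x^{(\ell)}$ and $x^{(\ell-1)}$ (function values and first derivatives match at both points) via Taylor expansion with a third-order remainder, exactly as in the proof of Lemma 2.8 in \cite{Kangal2015}; invertibility of the limit ${\mathcal D}''(A(x_\ast))$ then upgrades this to an $O(h^{(\ell)})$ bound on the difference of the inverse second derivatives.

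The closing step interprets $x^{(\ell+1)} = \arg\max_x \widetilde{{\mathcal D}}^{{\mathcal V}_\ell}(A(x))$ as a quasi-Newton iterate on ${\mathcal D}(A(x))$ about $x^{(\ell)}$: the zero- and first-order Hermite matching gives $[\widetilde{{\mathcal D}}^{{\mathcal V}_\ell}]'(A(x^{(\ell+1)})) = 0$, while the $O(h^{(\ell)})$ accuracy of the inverse Hessian produces the claimed superlinear bound by a Dennis--Mor\'e-type argument. This is precisely the final step in the proof of Theorem 3.3 in \cite{Kangal2015}, which transfers verbatim with ${\mathcal D}(A(x))$ and $\widetilde{{\mathcal D}}^{{\mathcal V}_\ell}(A(x))$ in place of $\lambda_J(\omega)$ and $\lambda_J^{(\ell)}(\omega)$. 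The main obstacle I anticipate is ensuring, uniformly in $\ell$, that the global minimizer of $\sigma^{{\mathcal V}_\ell}(x,{\rm i}\omega)$ over all $\omega \in {\mathbb R}$ stays confined to a neighborhood of $\omega_\ast$ for every $x$ near $x_\ast$; the reduced problem minimizes over the entire real line, so one must exclude the possibility of a distant local minimizer stealing the global minimum. This is precisely what the analog of Lemma \ref{thm:extend_smoothness} buys, and carrying it out carefully in the uniformly stable setting is the technical heart of the proof.
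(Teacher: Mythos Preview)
Your proposal is correct and follows the same route as the paper, which does not give a standalone proof of this theorem but simply remarks that the arguments of Sections \ref{sec:large_prob} and \ref{sec:convergence} carry over once monotonicity and Hermite interpolation are established for $\widetilde{{\mathcal D}}^{{\mathcal V}_\ell}$. Your outline fleshes out precisely those steps, and your observation that the hypothesis $\sigma_\alpha(x_\ast,z_\ast)>0$ from Theorem \ref{thm:convergence_rate} becomes superfluous when the inner minimization is restricted to the imaginary axis is correct.
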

It is also possible to define an extended variant of Algorithm \ref{alg_uniform_stab},
reminiscent of Algorithm \ref{alge}, to which the global convergence result 
(Theorem \ref{thm:global_conv_uniform_stab}) and the superlinear convergence 
result (Theorem \ref{thm:convergence_rate_uniform_stab}) apply for every $d$.

\begin{algorithm}
 \begin{algorithmic}[1]
\REQUIRE{ The matrix-valued function $A(x)$ of the form (\ref{eq:prob_defn}) with the feasible region $\widetilde{\Omega}$.}
\ENSURE{ The sequences $\{x^{(\ell)} \}$, $\{ \omega^{(\ell)} \}$.}
\STATE $x^{(1)} \gets$ a random point in $\widetilde{\Omega}$.
\STATE $\omega^{(1)} \gets \arg \min_{\omega \in {\mathbb R}} \; \sigma_{\min} (A(x^{(1)}) - \omega \ri I)$.
\STATE $V_1	\;	\gets \;$ a unit right singular vector corresponding to $\sigma_{\min} (A(x^{(1)}) - \omega^{(1)} \ri I)$. 
\STATE ${\mathcal V}_1 \; \gets \; {\rm span} \{ V_1 \}$.
\FOR{$\ell \; = \; 1, \; 2, \; \dots$}
	\STATE  $x^{(\ell+1)} \gets \arg \max_{x \in \widetilde{\Omega}}	\: \widetilde{{\mathcal D}}^{{\mathcal V}_\ell}(A(x))$.
	\STATE $\omega^{(\ell+1)} \gets \arg \min_{\omega \in {\mathbb R}} \; \sigma_{\min} (A(x^{(\ell+1)}) - \omega \ri I)$.
	\STATE $v_{\ell+1}	\;	\gets \;$ a right singular vector corresponding to $\sigma_{\min} (A(x^{(\ell+1)}) - \omega^{(\ell+1)} \ri I)$.
	\STATE $V_{\ell+1} 	\;	\gets	\; {\rm orth} 
			\left( \left[ \begin{array}{cc} V_{\ell} & v_{\ell+1} \end{array} \right] \right)$ 
			and ${\mathcal V}_{\ell+1} \; \gets \; {\rm Col} (V_{\ell+1})$.
\ENDFOR
 \end{algorithmic}
\caption{The Subspace Framework for Uniformly Stable Problems}
\label{alg_uniform_stab}
\end{algorithm}

When solving the reduced problem at step $\ell$, the objective $\widetilde{{\mathcal D}}^{{\mathcal V}_\ell}(A(x))$ needs
to be computed at several points. To perform the calculation of this objective efficiently at a given $x \in {\mathbb R}^d$, 
inspired by the approach in \cite{Kressner2014} in the context of a subspace framework for the pseudospectral
abscissa, we first compute a reduced QR factorization
\[
		\left[
			\begin{array}{cc}
				V_\ell 	&	A(x) V_\ell
			\end{array}
		\right]
				\;	=	\;
		Q
		\left[
			\begin{array}{cc}
				R_1		&	R_2
			\end{array}
		\right] 
\]
where $Q \in {\mathbb C}^{n\times 2\ell}$, $R_1, R_2 \in {\mathbb C}^{2\ell \times \ell}$. It follows that the singular
values of $A(x) V_\ell - \omega \ri V_\ell$ and $R_2 - \omega \ri R_1$ are the same for all $\omega \in {\mathbb R}$,
so
\[
	\widetilde{{\mathcal D}}^{{\mathcal V}_\ell}(A(x))
		\; =	\;
	\min_{\omega \in {\mathbb R}} \; \sigma_{\min}(R_2 - \omega \ri R_1).
\]
We solve the singular value minimization problem on the right above by employing the globally convergent 
level-set algorithms for the distance to instability \cite{Boyd1990, Bruinsma1990}. These minimization problems
are cheap to solve, because they involve $2\ell \times \ell$ matrix-valued functions. 

Finally for the maximization of $\widetilde{{\mathcal D}}^{{\mathcal V}_\ell}(A(x))$ we apply Algorithm \ref{small_alg}
in Section \ref{sec:small_prob} with 
$\widetilde{{\mathcal D}}^{{\mathcal V}_\ell}(A(x))$ taking the role of ${\mathcal D}(A(x))$. 
Every convergent subsequence of the resulting sequence,
under the assumption that $A(x)$ is asymptotically stable for all $x \in \widetilde{\Omega}$,  
is guaranteed to converge to a global maximizer of $\widetilde{{\mathcal D}}^{{\mathcal V}_\ell}(A(x))$
provided $\gamma$ is chosen such that 
$\lambda_{\max}(\nabla^2 [ \widetilde{{\mathcal D}}^{{\mathcal V}_\ell}(A(x)) ]^2) \leq \gamma$
for all $x \in \widetilde{\Omega}$. 
In the affine case with $A^{V_\ell}(x) = B_0 V_\ell + \sum_{j=1}^\kappa x_j B_j V_\ell$, 
a theoretically sound choice for $\gamma$ is given by the largest eigenvalue of the matrix on the right-hand side
of (\ref{eq:matrix_forgamma}) but with blocks defined in terms of $B_j V_\ell$ rather than $B_j$.

\section{Numerical Results for the Subspace Framework}\label{sec:num_exps}
In this section we report the results obtained from several numerical experiments that are carried out 
with a publicly available Matlab implementation \cite{Mengi2018} of Algorithm \ref{alg} to maximize
${\mathcal D}(A(x))$ when $A(x)$ is affine and $x$ is subject to box-constraints.
We do not make an implementation of Algorithm \ref{alg_uniform_stab} available at this
point, as it typically does not converge to a desired maximizer unless the problem is uniformly stable.
But some numerical results illustrating the convergence of Algorithm \ref{alg_uniform_stab} is 
also reported at the end of Section \ref{sec:num_exp_illustrate}  

\subsection{Illustration of the Algorithm on a $200 \times 200$ Example}\label{sec:num_exp_illustrate}
We first demonstrate Algorithm \ref{alg} and its convergence on the example of Figure \ref{fig:small_random2}, 
which concerns the maximization of ${\mathcal D}(A + k b c^T)$ for a random 
$A \in {\mathbb R}^{200\times 200}$ and random $b,c \in {\mathbb R}^{200}$. Recall that the distance 
function is smooth at the global maximizer for this example.

To make the example more challenging we replace $A$ with $\widetilde{A} = A - 0.08 b c^T$,
which translates the graph of the distance function horizontally by 0.08. We maximize 
${\mathcal D}(\widetilde{A} + k b c^T)$ over $k \in [-0.2, 0.2]$. This distance function is
asymptotically stable only in a small subinterval of $[-0.2,0.2]$, and $\widetilde{A}$ itself
is unstable. The locally convergent methods in the literature would fail
and stagnate at an unstable point unless they are initiated with an asymptotically stable
point on this small interval, which is hard to know in advance. In contrast, Algorithm \ref{alg}
locates an asymptotically stable point that maximizes the distance to instability
regardless of the initial random point for Hermite interpolation. (As a convention we always
choose the initial point as the midpoint of the box or the interval in our implementation, 
so for this particular example we start with $0$. But any other point in the interval should 
work equally well.)

Figure \ref{fig:support} displays the full distance function (the solid curve) and the reduced distance 
functions (the dashed, dashed-dotted, dotted curves) at the third, fourth, fifth steps of the algorithm. 
The dashed curve representing the reduced function at the third step with a three 
dimensional subspace interpolates the full distance function at at $k=-0.2, 0, 0.2$ (marked with circles). 
This reduced function is maximized at $k = 0.10612$, so at the next iteration the 
full distance function is interpolated at this maximizer as well. This leads to the dashed-dotted curve in the figure, 
which represents the reduced function at the fourth step of the algorithm with a four dimensional subspace. 
Next the full distance function is interpolated at the maximizer $k = 0.08019$ of the dashed-dotted curve, in addition
to the existing interpolation points, leading to the dotted curve representing the reduced function at step five with 
a five dimensional subspace. Observe that, as the subspace dimension increases, the reduced functions tend 
to the full distance function. Observe also that, due to monotonicity, each reduced function is squeezed in 
between the full distance function that lies below, and the reduced functions with smaller subspaces that lie above. 

The iterates of Algorithm \ref{alg} seem to converge at least at a superlinear rate on this example; this is depicted
in Table \ref{table:rate_conv}, in particular see the decay of the errors of the maximal values of the reduced distance 
functions on the third column of the bottom row, as well as the errors of the maximizers of the reduced distance 
functions on the third column of the top row. 

The minimum of $\sigma^{{\mathcal V}_\ell} (x^{(\ell+1)}, z)$ over $z \in {\mathbb C}^+$ must be attained 
on the imaginary axis at the later steps of Algorithm \ref{alg} in theory. This is a consequence of the facts that
$\| x^{(\ell+1)} - x^{(\ell)} \| \rightarrow 0$, as well as $\sigma^{{\mathcal V}_\ell} (x^{(\ell)}, z)$ must be minimized 
at $z^{(\ell)}$ on the imaginary axis due to Theorem \ref{thm:interpolate}. This property holds to be true for all
the examples that we have experimented with. For the particular example, the global minimizer of
$\sigma^{{\mathcal V}_\ell} (x^{(\ell+1)}, z)$ over $z \in {\mathbb C}^+$ is listed with respect to $\ell$
on the fourth column in the top row of Table \ref{table:rate_conv}; notice that the minimizer is strictly on the 
right-hand side in the first step, but in all other steps the minimizer is always on the imaginary axis.

The $\gamma$ values employed for the solution of the reduced problems increase monotonically as the 
subspace dimension increases. Typically in the first steps with small dimensional subspaces we deal
with subproblems that require much smaller values of $\gamma$ compared to the full problem.
This is a feature that contributes to the efficient solution of the reduced problems. The $\gamma$ values
employed for the reduced problems for the particular example are listed on the right-most column of the top row 
in Table \ref{table:rate_conv}; much smaller values of $\gamma$ are used compared with the full problem
for which the bound that follows from (\ref{eq:matrix_forgamma}) is 
$\gamma = 2 \| b c^T \|_2^2 = 2 \| b \|_2^2 \| c \|_2^2 = 72836$.

\begin{figure}[h]
		\hskip -2ex
		\includegraphics[width=\textwidth]{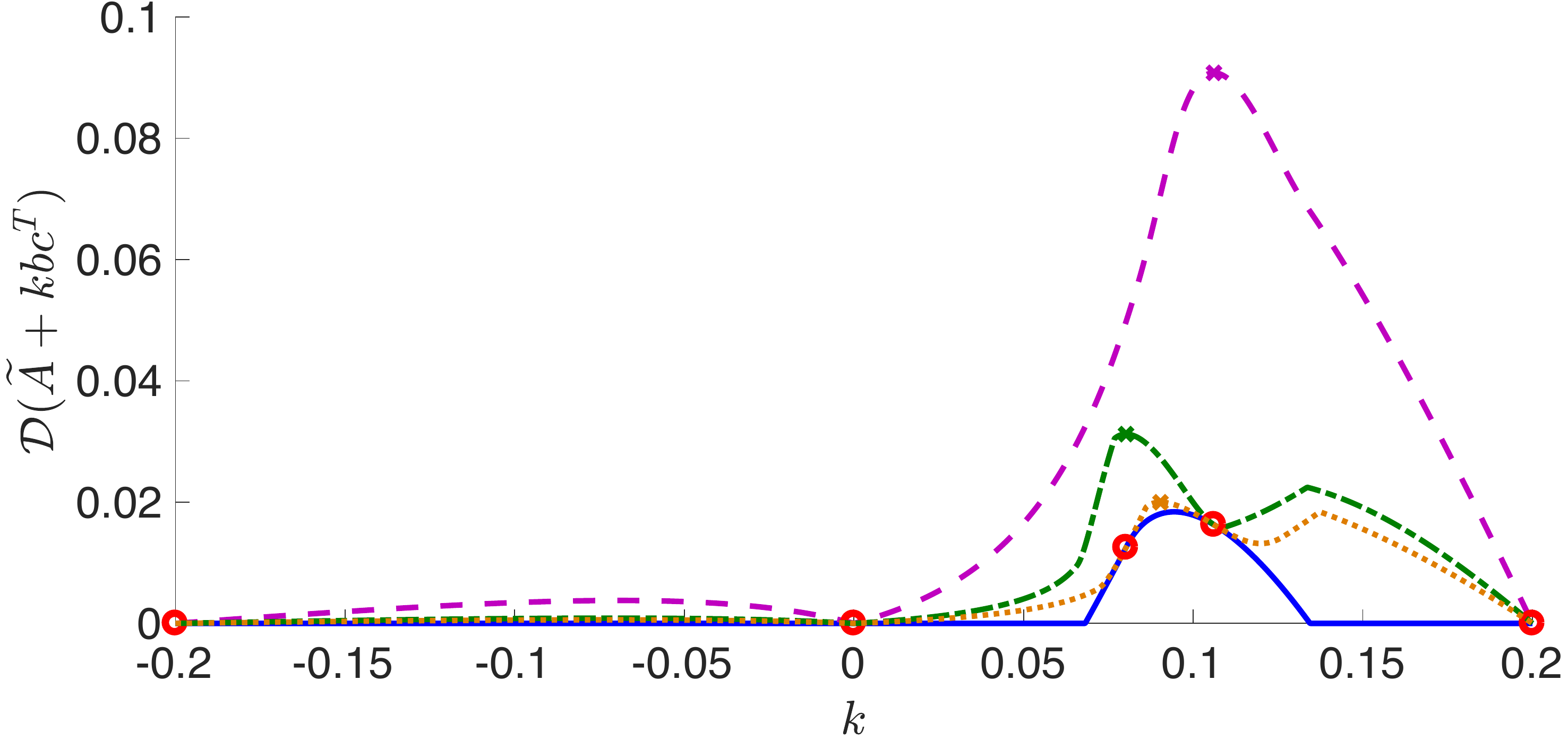} 	
	  \caption{ 
	  		This figure displays the progress of Algorithm \ref{alg} on a variation of the example of Figure 
			\ref{fig:small_random2}, which concerns the robust stability of a random matrix 
			$A \in {\mathbb R}^{200\times 200}$ subject to rank one updates $k b c^T$ 
			for given random $b, c \in {\mathbb R}^{200}$. Here we maximize 
			${\mathcal D}(\widetilde{A} + k b c^T)$ over $k \in [-0.2,0.2]$ where
			$\widetilde{A} := A - 0.08 b c^T$.
			The solid curve is a plot of the full distance function ${\mathcal D}(\widetilde{A} + k b c^T)$ 
			with respect to $k$, whereas
			the dashed, dashed-dotted, dotted curves are reduced functions operated 
			on at step $k$ of Algorithm \ref{alg} with a $k$ dimensional subspace for $k = 3, 4, 5$, respectively. 
			The crosses mark the global maxima of the reduced functions, 
			whereas the circles mark the points where the full distance function is Hermite-interpolated.
	   	     }
	     \label{fig:support}
\end{figure}

\begin{table}[h]
\begin{center}
\begin{tabular}{|c|rccc|}
	\hline
	$\ell$	&		$ x^{(\ell+1)} \quad$ 	&	$\quad | x^{(\ell+1)} - x_\ast | \quad$	 &	$\arg\min_{z\in {\mathbb C}^+} \sigma^{{\mathcal V}_\ell} (x^{(\ell+1)},z)$	&	$\gamma$ 	 \\
	\hline
	\hline
	1	&	 -0.2000000000		&	0.2943923006	&	 0.348268		&	164 			\\
	2	&	 0.2000000000		&	0.1056076994	&	0				&	948 			\\
	3	&	 0.1061223556		&	0.0117300550	&	 0.367974$\ri$		&	983 			\\
	4	&	 0.0801946598		&	0.0141976408	&	-0.249139$\ri$		&	1346 		\\
	5	&	 0.0903625728		&	0.0040297278	&	-0.294728$\ri$		&	1465  		\\
	6	&	 0.0944455439		&	0.0000532433	&	-0.323530$\ri$		&	2201 		\\
	7	&	 0.0943923006		&	0.0000000143	&	-0.323085$\ri$		&	2374			\\
	\hline
\end{tabular}
\end{center}

\vskip 1ex

\begin{center}
\begin{tabular}{|c|cc|}
	\hline
	$\ell$		&	${\quad \mathcal D}^{{\mathcal V}_\ell}(A(x^{(\ell+1)}))$		&		$\quad {\mathcal D}^{{\mathcal V}_\ell}(A(x^{(\ell+1)})) - {\mathcal D}_\ast \quad$  \\
	\hline
	\hline
	1	&	1.8127144436		&	1.7943202283		 \\
	2	&	0.7324009260		&	0.7140067108		 \\
	3	&	0.0907849141		&	0.0723906988		 \\
	4	&	0.0312089149		&	0.0128146997		 \\
	5	&	0.0199709057		&	0.0015766904		 \\
	6	&	0.0184001783		&	0.0000059630	 	\\
	7	&	0.0183942153		&	0.0000000000		\\
	\hline
\end{tabular}
\end{center}


\caption{ The table concerns an application of Algorithm \ref{alg} to the example of Figure \ref{fig:support}
on the interval $[-0.2,0.2]$.
It lists the iterates generated by Algorithm \ref{alg}, as well as their errors, 
the points in ${\mathbb C}^+$ where ${\mathcal D}^{{\mathcal V}_\ell}(A(x^{(\ell+1)}))$ are attained 
and the $\gamma$ values used
for the reduced problems. The notations ${\mathcal D}_\ast$ and $x_\ast$ represent the maximal value of
${\mathcal D}(\widetilde{A} + k b c^T)$ over $k \in [-0.2,0.2]$ and the corresponding global maximizer, 
respectively.
}
\label{table:rate_conv}
\end{table}

Algorithm \ref{alg_uniform_stab} which performs the inner minimization over the imaginary axis
rather than the right-half of the complex plane fails to converge, indeed stagnates at an unstable
point, on this example on the interval $[-0.2, 0.2]$. Similar phenomenon occurs even on smaller 
intervals, such as $[0.05,0.15]$, as long as the interval contains unstable points. This algorithm
however converges to the correct global maximizer at a superlinear rate on the interval $[0.07,0.13]$ where
uniform stability holds; this is consistent with what is expected in theory, in particular with Theorems
\ref{thm:global_conv_uniform_stab} and \ref{thm:convergence_rate_uniform_stab}. 
The maximal values of the reduced distance functions by Algorithm \ref{alg_uniform_stab}
on this example are listed in Table \ref{table:rate_conv_uniform}.

\begin{table}
\begin{center}
\begin{tabular}{|c|cc|}
	\hline
	$\ell$		&	${\quad \widetilde{\mathcal D}}^{{\mathcal V}_\ell}(A(x^{(\ell+1)}))$		&		$\quad {\widetilde{\mathcal D}}^{{\mathcal V}_\ell}(A(x^{(\ell+1)})) - {\mathcal D}_\ast \quad$  \\
	\hline
	\hline
	1	&	0.4386346102		&	0.4202403949		 \\
	2	&	0.2183039380		&	0.1999097228		 \\
	3	&	0.0188802168		&	0.0004860015		 \\
	4	&	0.0183970429		&	0.0000028276		 \\
	5	&	0.0183942153		&	0.0000000000		 \\
	\hline
\end{tabular}
\end{center}

\caption{ This table concerns an application of Algorithm \ref{alg_uniform_stab} to the 
example of Figure \ref{fig:support} on the interval $[0.07,0.13]$ where uniform stability holds.
It lists the maximal values of the reduced distance functions and their errors
with respect to the subspace dimension.
}
\label{table:rate_conv_uniform}
\end{table}

\subsection{Results on Large Random Examples}\label{sec:numexp_large}
We test the performance of Algorithm \ref{alg} once again to maximize ${\mathcal D}(A + k b c^T)$
over $k \in [-3,3]$ with random $A \in {\mathbb R}^{n\times n}$, $b, c \in {\mathbb R}^n$
for $n = 400, 800, 1200, 1600, 2000$. In each of these examples, $A$ is shifted by a multiple of the identity
and normalized so that $\| A \|_2 = 10$ and its spectral abscissa lies in $[-0.31, -0.23]$, and the vectors 
$b, c$ are normalized so that $\| b \|_2 = \| c \|_2 = \sqrt{50}$. 
The exact data is available on the 
web\footnote{\url{http://home.ku.edu.tr/~emengi/software/max_di/Data_&_Updates.html}}.

Convergence to a global maximizer at a superlinear rate is realized in practice for each one of these examples.
Figure \ref{fig:large_dist} confirms the correctness of the computed maximizers for $n = 400$ (dotted curve),
$n = 800$ (dashed curve), $n = 1200$ (solid curve); the distance to instability functions are displayed in these
figures along with the computed global maxima marked with circles.

The main purpose of these experiments is to illustrate the factors determining the efficiency of Algorithm \ref{alg} 
as the sizes of the matrices increase. According to Table \ref{table:large_runtimes} the main factor that dominates 
the run-times as $n$ increases is the computation of the distance to instability for the full problem a few times,
i.e., the solution of the large-scale singular value minimization problems in lines \ref{large_dinstab} and \ref{large_dinstab2}.
The number of large-scale distance to instability computations is at most one more than the number of subspace iterations,
which varies between 6-10 for these examples. The solution of the 
reduced problems take significant portion of the run-time for small $n$ (e.g., $n = 400$), but its effect on the total
runtime diminishes as $n$  increases. Indeed the total time consumed for the solution of the reduced problems appear 
more or less independent of $n$.

For all of these examples we would be using $\gamma = 2 \| b c^T \|_2^2 = 5000$, had we rely on formula (\ref{eq:matrix_forgamma})
and had we been performing optimization directly on the full problem. But as in the previous subsection, we
benefit from projections and use $\gamma = 2 \| V_\ell^T b (V_\ell^T c)^T \|_2^2$ for the reduced problem at
step $\ell$, which turns out to be much smaller than the one for the full problem. Indeed the $\gamma$ values chosen
this way for the reduced problems never exceed 400 for the particular examples; this is evident in 
Figure \ref{fig:gamma_values}, where the $\gamma$ values are plotted for the reduced problems with respect to 
the iteration number.

\begin{figure}
		\includegraphics[width=\textwidth]{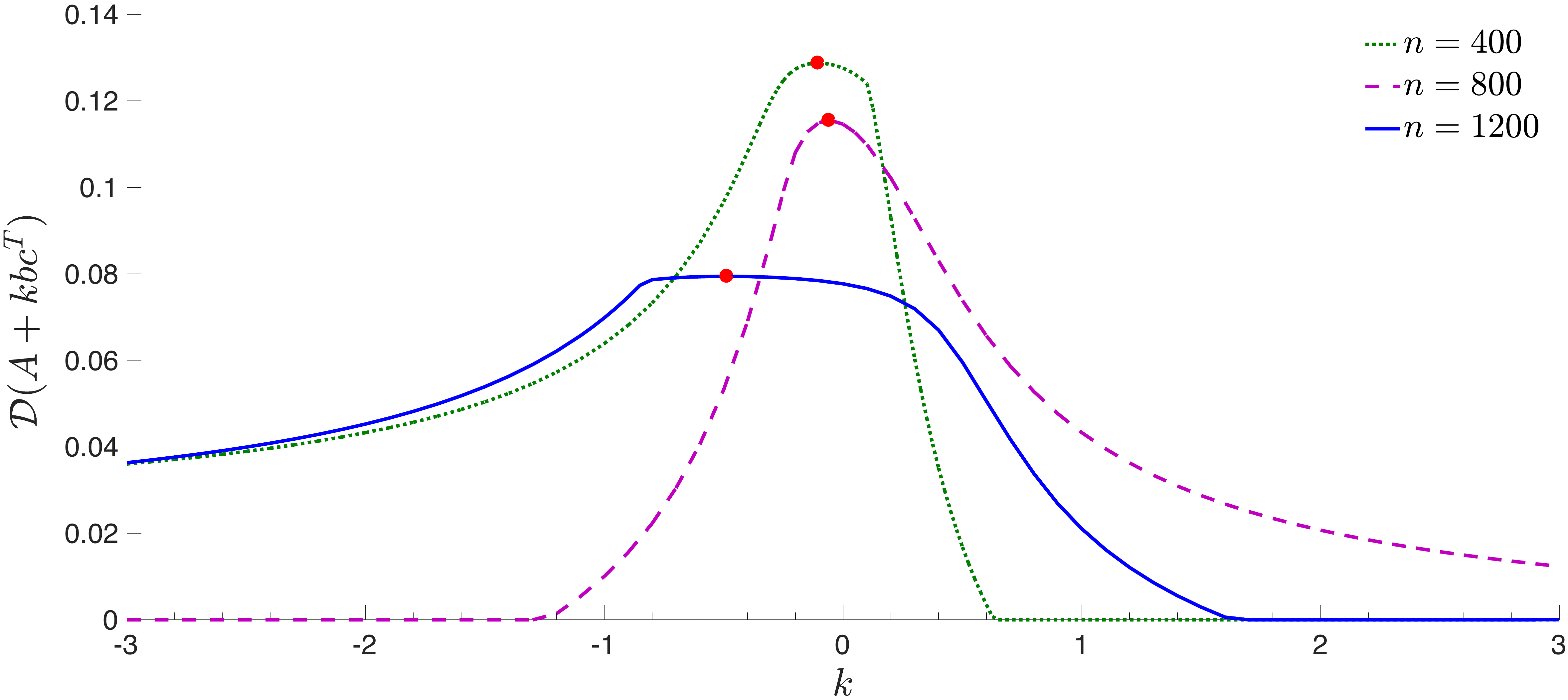} 	
	  \caption{ 
	  		The plots of ${\mathcal D}(A + k b c^T)$ as a function of $k \in [-3,3]$
			for the examples of Section \ref{sec:numexp_large} for $n = 400$ (dotted curve),
			$n$ = 800 (dashed curve), $n$ = 1200 (solid curve). The circle marks the computed 
			global maximum for each one of the three cases.
	   	     }
	     \label{fig:large_dist}
\end{figure}

\begin{table}
\begin{center}
\begin{tabular}{|c|cccccc|}
	\hline
	$n$		&		$\#$ iter 	& time	 &	reduced	&	dist. instab.	&  $x_\ast$		&	${\mathcal D}_{\ast}$  \\
	\hline
	\hline
	400		&	7			&	154	 &	123		&	30		&     -0.10565 	&	0.12870882		\\
	800		&	6			&	200	 &	72		&	124		&     -0.05943	&	0.11545563		\\
	1200		&	6			&	632	 &	245		&	372		&     -0.48694	&	0.07941192		\\
	1600		&	9			&	1646  &	366		&	1228	 	&     -0.05009	&	0.07829585		\\
	2000		&	10			&	4288 &	279		&	3896		&      $\:$ 0.04762	&	0.08436380 	\\
	\hline
\end{tabular}
\end{center}

\caption{The number of subspace iterations (2nd column), run-times (3rd-5th columns), computed globally maximal value of the 
distance ${\mathcal D}(A + k b c^T)$ (7th column) and the corresponding global maximizer (6th column) are listed for each 
one of the examples in Section \ref{sec:numexp_large} with respect to $n$. The 3rd, 4th, 5th columns
provide the total run-time, total time for the reduced problems, total time for large-scale distance to instability computations
in seconds. }
\label{table:large_runtimes}
\end{table}

\begin{figure}
		\includegraphics[width=\textwidth]{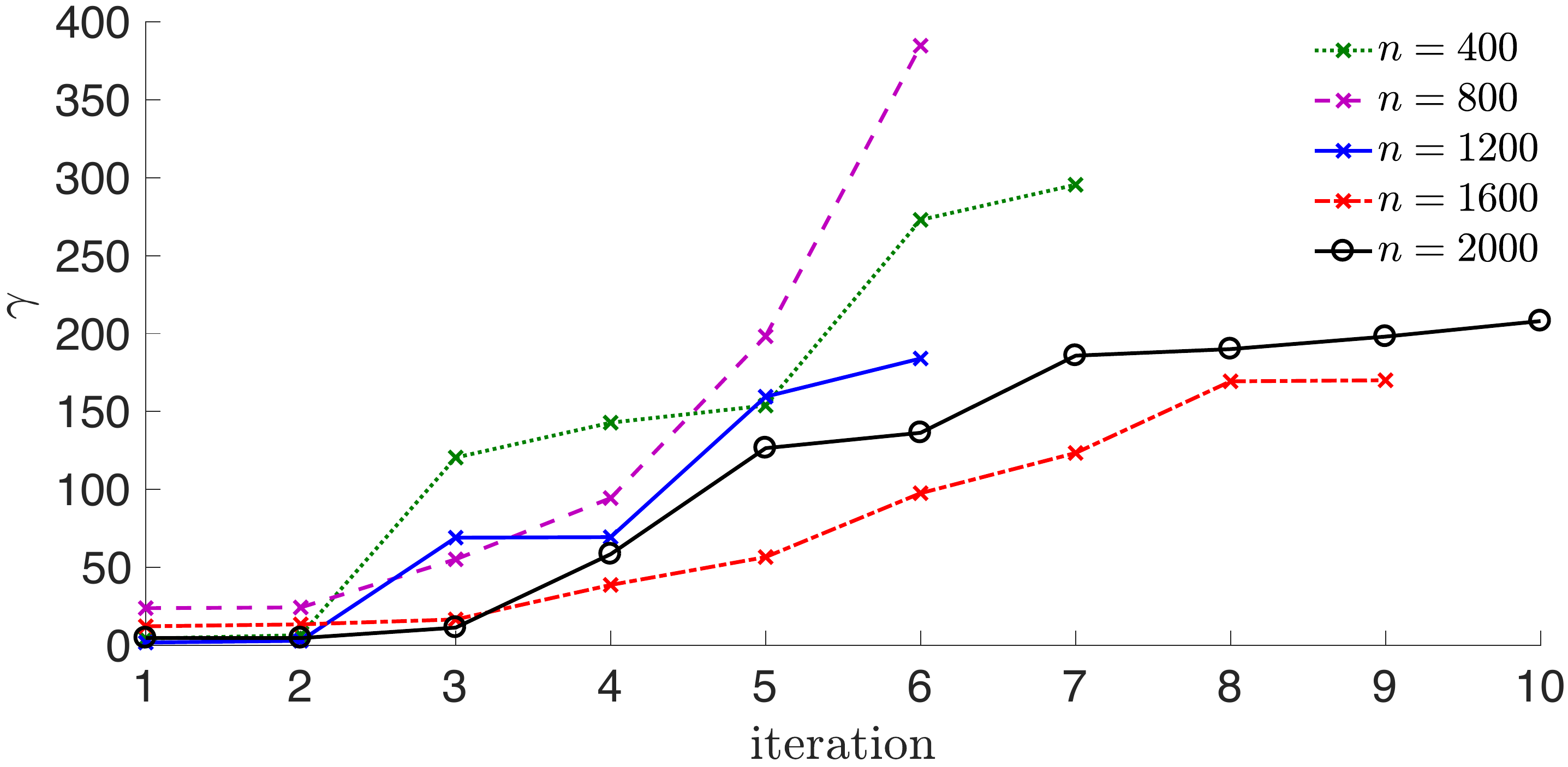} 	
	  \caption{ 
	  		A plot of the $\gamma$ values employed for the reduced problems for the examples
			of Section \ref{sec:numexp_large} with respect to the iteration number. The 
			dotted, dashed, solid, dotted-dashed curves marked with crosses correspond
			to the problems with $n = 400$, $n = 800$, $n = 1200$, $n = 1600$, respectively.
			 The solid curve marked with circles corresponds to the problem with $n = 2000$.
	   	     }
	     \label{fig:gamma_values}
\end{figure}

\section{Concluding Remarks}
We have focused on the maximization of the distance to instability of a matrix dependent on several
parameters. In the systems setting this is motivated by maximizing the robust stability and minimizing the
transient behavior of the associated autonomous control system. Existing approaches including BFGS, gradient 
sampling, and bundle methods converge to a locally optimal solution. This local convergence attribute is problematic
especially if the starting point is unstable; then all of these methods would stagnate at the unstable point.
Unlike these existing approaches in the literature, we have described approaches that converge to a
globally optimal point. In particular, even if our approaches are initiated with unstable points, they are capable
of locating stable points, and beyond robustly stable points where distance to instability is maximized.

The problems where the parameter-dependent matrix is small are dealt by means of a support function 
based approach, an adaptation of the approach in \cite{Mengi2014} that approximates the distance function 
with a piece-wise quadratic model which also lies above the distance function globally. Arguments are
presented in support of its global convergence. 

The problems where the parameter-dependent matrix is large are dealt by means of a subspace framework.
Here the matrices are restricted to subspaces from the right-hand side leading to rectangular problems. At every iteration
such a restricted problem is solved efficiently by employing the support function based approach for small 
problems. Then the subspace is expanded carefully in a way so that Hermite interpolation properties hold 
between the full problem and the restricted problem at the optimal point of the restricted problem.
We have established formally that the Hermite interpolation property along with a monotonicity property
give rise to global convergence at a superlinear rate with respect to the subspace dimension.
The proposed approaches for both the small-scale and the large-scale problems are implemented in Matlab
and made available on the internet \cite{Mengi2018}.

The present work extend the approaches in \cite{Mengi2014} and \cite{Kangal2015} to a more challenging
setting; whereas the approaches in those previous works concern the minimization of the $J$th largest eigenvalue,
the approaches here are tailored for a maximin optimization problem with a smallest eigenvalue as the
objective. There are several other eigenvalue optimization problems of similar spirit with a maximin or a minimax 
structure, e.g., minimization of the numerical radius, minimization of the ${\mathcal H}_\infty$-norm 
of a parameter dependent control system, minimization of the pseudospectral abscissa to name a few.
We believe that the ideas and approaches developed in this paper set an example, and are likely to be 
applicable to those contexts as well.

\smallskip

\noindent
\textbf{Acknowledgements.} Parts of this work have been carried out during author's visits to
\'{E}cole Polytechnique F\'{e}d\'{e}rale de Lausanne (EPFL) and Courant Institute of New York University (NYU).
The author is grateful to Daniel Kressner and Michael Overton for hosting him at EPFL and NYU, respectively,
as well as for fruitful discussions.

\bibliography{optimize_distinstab}

\end{document}